\newcommand{\R}{\mathbb{R}}
\newcommand{\C}{\mathbb{C}}
\newcommand{\Q}{\mathbb{Q}}
\newcommand{\Z}{\mathbb{Z}}
\newcommand{\F}{\mathbb{F}}
\newcommand{\p}{\mathfrak p}
\newcommand{\PP}{\mathbb P}
\renewcommand{\q}{\mathfrak q}
\renewcommand{\P}{\mathfrak P}
\newcommand{\calG}{\mathcal G}
\renewcommand{\O}{\mathcal O}
\newcommand{\calC}{\mathcal C}
\newcommand{\calX}{\mathcal X}
\newcommand{\WW}{\mathcal W}
\newcommand{\ZZ}{\mathcal Z}
\newcommand{\into}{\hookrightarrow}
\newcommand{\onto}{\twoheadrightarrow}
\renewcommand{\to}{\rightarrow}
\renewcommand{\epsilon}{\varepsilon}
\newcommand{\Aut}{\operatorname{Aut}}
\newcommand{\Gal}{\operatorname{Gal}}
\newcommand{\Sym}{\operatorname{Sym}}
\newtheorem{thm}{Theorem}[section]
\newtheorem{conj}[thm]{Conjecture}
\newtheorem{lem}[thm]{Lemma}
\newtheorem{prop}[thm]{Proposition}
\newtheorem{cor}[thm]{Corollary}
\theoremstyle{definition}
\newtheorem{rem}[thm]{Remark}
\newtheorem{prob}[thm]{Problem}
\numberwithin{equation}{section}
\title[Specializations of dynatomic polynomials]{A finiteness theorem for specializations of dynatomic polynomials}
\author{David Krumm}
\address{Mathematics Department\\
Reed College}
\email{dkrumm@reed.edu}
\urladdr{http://maths.dk}
\keywords{Arithmetic dynamics, function fields, Galois theory}
\subjclass[2010]{Primary: 37P05, 37P35. Secondary: 11S15}
\begin{document}
\maketitle

\begin{abstract}
Let $t$ and $x$ be indeterminates, let $\phi(x)=x^2+t\in\Q(t)[x]$, and for every positive integer $n$ let $\Phi_n(t,x)$ denote the $n^{\text{th}}$ dynatomic polynomial of $\phi$. Let $G_n$ be the Galois group of $\Phi_n$ over the function field $\Q(t)$, and for $c\in\Q$ let $G_{n,c}$ be the Galois group of the specialized polynomial $\Phi_n(c,x)$. It follows from Hilbert's irreducibility theorem that for fixed $n$ we have $G_n\cong G_{n,c}$ for every $c$ outside a thin set $E_n\subset\Q$. By earlier work of Morton (for $n=3$) and the present author (for $n=4$), it is known that $E_n$ is infinite if $n\le 4$. In contrast, we show here that $E_n$ is finite if $n\in\{5,6,7,9\}$. As an application of this result we show that, for these values of $n$, the following holds with at most finitely many exceptions: for every $c\in\Q$, more than $81\%$ of prime numbers $p$ have the property that the polynomial $x^2+c$ does not have a point of period $n$ in the $p$-adic field $\Q_p$.
\end{abstract}

\section{Introduction}\label{intro_section}

Let $c$ be a rational number and let $\phi_c(x)=x^2+c$. Given any algebraic number $x_0$, we may consider the sequence $x_0, \phi_c(x_0),\phi_c(\phi_c(x_0)),\ldots$. If this sequence is periodic with period $n$, we say that $x_0$ has period $n$ under iteration of $\phi_c$. By allowing $c$ and $x_0$ to vary in $\Q$, one can find examples where $x_0$ has period 1, 2, or 3 under $\phi_c$. For instance, the pairs
\[(c,x_0)=(0,0), \;(-1,0),\; (-29/16, 5/4)\]
provide examples of periods 1, 2, and 3, respectively.

In \cite{poonen_prep} Poonen conjectured that if $n>3$, then there does not exist $c\in\Q$ such that the polynomial $\phi_c$ has a rational point of period $n$. This has been proved for periods 4 and 5, and also for period 6 assuming the BSD conjecture; see \cite{morton_period4, flynn-poonen-schaefer, stoll_6cycles}. The present paper is concerned with a strong form of Poonen's conjecture which was stated by the author in \cite{krumm_lgp}: if $n>3$, then for every $c\in\Q$ there exist infinitely many primes $p$ such that $\phi_c$ does not have a point of period $n$ in the $p$-adic field $\Q_p$. In fact, we will consider here a further strengthening of Poonen's conjecture.

\begin{conj}\label{positivity_conjecture}
Fix $n>3$. For every $c\in\Q$, let $T_{n,c}$ denote the set of primes $p$ such that $\phi_c$ does not have a point of period $n$ in $\Q_p$, and let $\delta(T_{n,c})$ be the Dirichlet density of $T_{n,c}$. Then $\delta(T_{n,c})>0$ for all $c\in\Q$.
\end{conj}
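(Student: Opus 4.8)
The plan is to translate the conjecture into a purely group-theoretic statement about the Galois groups $G_{n,c}$ and then to control these groups by means of the generic group $G_n$. For a prime $p$ of good reduction, a point of period $n$ for $\phi_c$ in $\Q_p$ is the same thing (apart from the finitely many $c$ for which $\Phi_n$ acquires roots of lower period) as a root of the specialized dynatomic polynomial $\Phi_n(c,x)$ in $\Q_p$; and for $p$ unramified in the splitting field, Hensel's lemma shows that such a root exists precisely when $\Phi_n(c,x)$ has a root modulo $p$, i.e.\ when the Frobenius at $p$ fixes one of the roots. Thus $p\in T_{n,c}$ if and only if the Frobenius conjugacy class acts without fixed points on the roots of $\Phi_n(c,x)$. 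By the Chebotarev density theorem this yields
\[
\delta(T_{n,c})=\frac{\#\{g\in G_{n,c}:g\text{ has no fixed point on the roots of }\Phi_n(c,x)\}}{|G_{n,c}|},
\]
so the conjecture is equivalent to the assertion that $G_{n,c}$ contains a fixed-point-free element (a derangement) for every $c\in\Q$.

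First I would dispose of the generic $c$. It is known that $\Phi_n(t,x)$ is irreducible over $\Q(t)$, so $G_n$ acts transitively on its roots, and since $\deg\Phi_n\ge 2$ Jordan's theorem on transitive groups guarantees a derangement in $G_n$. By Hilbert's irreducibility theorem $G_{n,c}\cong G_n$ as permutation groups for all $c$ outside the thin set $E_n$, and for such $c$ the transported derangement witnesses $\delta(T_{n,c})>0$. This already settles the conjecture for every $c\notin E_n$, uniformly in $n$.

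The remaining, and decisive, difficulty is the set $E_n$ of exceptional specializations. For $c\in E_n$ the group $G_{n,c}$ is a proper (smaller) permutation group, its orbits correspond to the irreducible factors of $\Phi_n(c,x)$, and a derangement must now be fixed-point-free on \emph{every} orbit simultaneously --- something Jordan's theorem does not supply. Two phenomena can destroy positivity here: if $\Phi_n(c,x)$ acquires a rational root then $\phi_c$ has a rational point of period $n$, $T_{n,c}$ is finite, and $\delta(T_{n,c})=0$; and even with no rational root the factorization could be ``intersective,'' with $G_{n,c}$ covered by its point stabilizers and hence devoid of derangements. The first possibility shows that Conjecture~\ref{positivity_conjecture} contains Poonen's conjecture as a special case, so a full proof for all $n>3$ is at least as hard as proving the nonexistence of rational periodic points --- which is itself open for large $n$.

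Consequently the strategy can be pushed to an unconditional theorem only for those $n$ where $E_n$ is provably finite: there one reduces to checking the finitely many exceptional $c$ by hand, computing each $G_{n,c}$ explicitly and exhibiting a derangement (equivalently, verifying that $\Phi_n(c,x)$ has no rational root and is not intersective). The finiteness theorem established in this paper supplies exactly this input for $n\in\{5,6,7,9\}$, and I expect the main obstacle to extending the argument to all $n>3$ to be precisely the failure --- or at least the present inaccessibility --- of the finiteness of $E_n$ for the remaining periods.
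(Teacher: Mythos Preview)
The statement you are addressing is a \emph{conjecture}; the paper does not prove it, and neither does your proposal. Your reduction to the existence of a derangement in $G_{n,c}$ via Chebotarev is correct and is exactly what the paper does in \S8 (Lemmas~8.1 and~8.2), and your treatment of the generic case $c\notin E_n$ agrees with the paper's Theorem~1.3. The paper even computes the exact proportion of derangements in $G_n\cong(\Z/n\Z)\wr S_r$, obtaining the explicit densities $>0.81$, etc., rather than merely invoking Jordan's theorem.

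The genuine gap is in your last paragraph. You assert that for $n\in\{5,6,7,9\}$ the strategy ``can be pushed to an unconditional theorem'' by ``checking the finitely many exceptional $c$ by hand.'' This is not possible with the input the paper provides. The finiteness of $E_n$ is proved by showing that each relevant quotient curve $\calX/M_i$ has genus~$>1$ and then invoking Faltings's theorem; this is \emph{ineffective}, and the paper says explicitly (end of \S1 and \S9) that determining the elements of $E_n$ ``seems impossible with current methods'' because it would require finding all rational points on curves of very large genus (one has genus~$9526$). So there is no finite list to check by hand. Consequently the paper's Theorem~1.3 leaves the conjecture open for the finitely many---but unknown---values $c\in E_n$, and your proposal does not close this gap either. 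In particular, the obstacle for $n\in\{5,6,7,9\}$ is not merely the finiteness of $E_n$ (which is known) but the ineffectivity of Faltings, together with the fact that verifying the conjecture at even a single exceptional $c$ already presupposes Poonen's conjecture at that $c$.
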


In order to study these conjectures it is useful to consider a family of \emph{dynatomic polynomials} defined as follows. For every positive integer $n$ we define a two-variable polynomial $\Phi_n\in\Q[t,x]$ by the formula
\begin{equation}\label{dynatomic_definition}
\Phi_n(t,x)=\prod_{d|n}\left(\phi^d(x)-x\right)^{\mu(n/d)},
\end{equation}
where $\mu$ is the M\"{o}bius function, $\phi(x)=x^2+t\in\Q(t)[x]$, and $\phi^d$ denotes the $d$-fold composition of $\phi$ with itself. The key property linking $\Phi_n$ to the above conjectures is that, for fixed  $c\in\Q$, every algebraic number having period $n$ under iteration of $\phi_c$ is a root of $\Phi_n(c,x)$, and conversely, every root of $\Phi_n(c,x)$ has period $n$ under $\phi_c$ except in rare cases when the period may be smaller than $n$; see \cite[Thm. 2.4]{morton-patel} for further details. 

Questions about the points of period $n$ under $\phi_c$ can thus be phrased as questions about the roots of $\Phi_n(c,x)$. It is therefore to be expected that a good understanding of the Galois group of $\Phi_n(c,x)$ will yield substantial information about the dynamical properties of the map $\phi_c$. The results of the article \cite{krumm_fourth_dynatomic} provide an example of the type of information that can be obtained in this way. By a careful analysis of how the Galois group of $\Phi_4(c,x)$ can change as $c$ varies in $\Q$, it is proved there that if $\alpha\in\bar\Q$ has period four under a map $\phi_c$, then the degree $[\Q(\alpha):\Q]$ can only be $2,4,8$, or 12; in particular the degree cannot be 1, which implies that $\phi_c$ does not have a rational point of period 4. Furthermore, the Galois group data is used to show that $\delta(T_{4,c})>0.39$ for every $c\in\Q$, thus proving Conjecture \ref{positivity_conjecture} for $n=4$. Motivated by these results, we are led to the following problem.

\begin{prob}\label{galois_family_problem}
Let $G_{n,c}$ denote the Galois group of $\Phi_n(c,x)$ over $\Q$. For fixed $n$, determine the structure of all the groups $G_{n,c}$ as $c$ varies in $\Q$.
\end{prob}

Since the polynomials $\Phi_n(c,x)$ for $c\in\Q$ are specializations of $\Phi_n$, it follows from Hilbert's irreducibility theorem \cite[Prop. 3.3.5]{serre_topics} that for every rational number $c$ outside a thin subset of $\Q$, the group $G_{n,c}$ is isomorphic to the Galois group of $\Phi_n$ over the function field $\Q(t)$. Moreover, by work of Bousch~\cite[Chap. 3]{bousch} it is known that $\Phi_n$ is irreducible and that its Galois group, which we denote by $G_n$, is isomorphic to a wreath product of a cyclic group and a symmetric group; indeed,  $G_n\cong(\Z/n\Z)\wr S_r$, where $rn=\deg\Phi_n$. Hence, for most $c\in\Q$ the structure of $G_{n,c}$ is known. However, a complete solution of Problem \ref{galois_family_problem} would require understanding precisely for which numbers $c$ the specialization $t\mapsto c$ fails to preserve the Galois group of $\Phi_n$. This raises a new but closely related problem.

\begin{prob}\label{galois_specialization_problem}
For fixed $n$, determine all $c\in\Q$ such that $G_{n,c}\not\cong G_n$.
\end{prob}

Let $E_n=\{c\in\Q\;\vert\;G_{n,c}\not\cong G_n\}$. By work of Morton~\cite{morton_period3} and the present author~\cite{krumm_fourth_dynatomic}, the sets $E_n$ are well understood for $n\le 4$; in particular, one notable feature of these sets is that they are infinite. In contrast, empirical evidence suggests that $E_n$ is finite for every $n>4$. The main purpose of this article is to prove this finiteness statement for several values of $n$.

\begin{thm}\label{finiteness_theorem_intro}
The set $E_n$ is finite if $n\in\{5,6,7,9\}$.
\end{thm}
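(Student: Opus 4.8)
We sketch the strategy. Let $L$ denote the splitting field of $\Phi_n$ over $\Q(t)$. By Bousch's analysis the extension $L/\Q(t)$ is regular (his computation identifies $G_n = (\Z/n)\wr S_r$ as the geometric as well as the arithmetic monodromy group), so for each subgroup $H\le G_n$ the fixed field $L^H$ is the function field of a smooth projective geometrically irreducible curve $C_H/\Q$, equipped with a natural morphism $\pi_H\colon C_H\to\PP^1$ of degree $[G_n:H]$. There is a finite set $B_n\subset\Q$ — the branch points of $L/\Q(t)$ together with the finitely many $c$ for which $\Phi_n(c,x)$ is inseparable — outside of which the specialization $t\mapsto c$ is well behaved: by the standard theory of specializing covers \cite{serre_topics}, for $c\in\Q\setminus B_n$ one has $G_{n,c}\not\cong G_n$ if and only if $G_{n,c}$ lies in a conjugate of a maximal subgroup $H\le G_n$, which in turn holds if and only if $c=\pi_H(P)$ for some $P\in C_H(\Q)$. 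Hence $E_n\setminus B_n=\bigcup_H\pi_H(C_H(\Q))$, the union taken over a set of representatives for the conjugacy classes of maximal subgroups of $G_n$. Since each $\pi_H$ has degree at least $2$ it has finite fibres, so $\pi_H(C_H(\Q))$ is finite whenever $C_H(\Q)$ is; by Faltings' theorem this holds as soon as $g(C_H)\ge 2$. The theorem therefore reduces to the assertion
\[
g(C_H)\ge 2\quad\text{for every maximal subgroup }H\le G_n,\ n\in\{5,6,7,9\}.
\]

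To organize the verification I would enumerate the maximal subgroups of $G_n=(\Z/n)^r\rtimes S_r$. Those containing the base group $(\Z/n)^r$ are exactly the preimages $(\Z/n)^r\rtimes M$ of the maximal subgroups $M$ of $S_r$ — so $M=A_r$, an intransitive $S_k\times S_{r-k}$, an imprimitive $S_a\wr S_b$ with $ab=r$, or a primitive maximal subgroup; the corresponding $C_H$ is the quotient by $M$ of the curve $C_{(\Z/n)^r}$, which is Galois over $\PP^1$ with group $S_r$. Here the case $M=S_{r-1}$ recovers the orbit curve $Y_0(n)$ (degree $r$ over $\PP^1$), and $M=A_r$ gives the double cover of $\PP^1$ cut out by the discriminant of the degree-$r$ ``orbit polynomial''. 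The maximal subgroups \emph{not} containing $(\Z/n)^r$ project onto $S_r$ and meet $(\Z/n)^r$ in the preimage $W_p$ of the augmentation submodule of $(\Z/p)^r$ for some prime $p\mid n$, possibly with a sign twist when $2\mid n$; each such $C_H$ is a cyclic cover of $\PP^1$ of degree $p$. In both families there are only finitely many conjugacy classes.

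For the genus bounds one applies Riemann--Hurwitz to $\pi_H$, which requires the ramification data of $L/\Q(t)$: the branch points — $t=\infty$, $t=1/4$, and the finitely many $t$ at which two period-$n$ orbits collide or a period-$n$ orbit degenerates to one of period dividing $n$, all extracted from $\operatorname{disc}_x\Phi_n$ and the resultants $\operatorname{Res}_x(\Phi_n,\Phi_m)$ for $m\mid n$ — together with the cycle type in $G_n$ of the inertia generator at each branch point. Granting these, for any $H$ one has
\[
2g(C_H)-2=-2[G_n:H]+\sum_{P}\bigl([G_n:H]-c_P(H)\bigr),
\]
the sum over branch points $P$, where $c_P(H)$ is the number of cycles of the inertia generator at $P$ acting on $G_n/H$. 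The delicate cases are those of smallest index: the orbit curve $Y_0(n)$, for which $g(Y_0(n))\ge 2$ when $n\in\{5,6,7,9\}$ follows from the known genus formula for dynatomic curves; the double cover attached to $A_r$, whose genus equals $\tfrac12(\#\text{branch points})-1$ and is controlled by the explicit factorization of the orbit polynomial's discriminant; and the degree-$p$ cyclic covers of the second family, for which the ramification already present over $t=\infty$ and over the collision points is shown to force $g\ge 2$. For the remaining maximal subgroups the index is larger and the inequality is easier, though for $n=7$ and $n=9$ (where $r=18$ and $r=56$) the primitive maximal subgroups of $S_r$ that must be checked make a machine-assisted case analysis appropriate.

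The principal obstacle is the first ingredient above: the precise determination, at $t=\infty$, $t=1/4$, and each collision point, of the inertia generator as a conjugacy class in $G_n$ — in particular its cycle type on the $r$ orbits together with its cyclic components inside orbits — since, once this is in hand, everything downstream is mechanical (if lengthy) Riemann--Hurwitz bookkeeping. A secondary difficulty is the size of the problem for larger $n$: the argument succeeds for a given $n>4$ exactly when all of the curves $C_H$ have genus at least $2$, and this can be verified precisely for $n\in\{5,6,7,9\}$; for $n\le 4$ the orbit curve $Y_0(n)$, or a small-degree cover of $\PP^1$ related to it, has genus at most $1$, which is consistent with the known infinitude of $E_n$ there, and for other excluded values of $n$, notably $n=8$, an analogous low-genus curve appears to obstruct the argument.
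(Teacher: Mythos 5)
Your reduction is exactly the one the paper uses: Hilbert irreducibility plus Faltings reduces the theorem to showing that the quotient of the Galois closure of $\Phi_n$ by each maximal subgroup of $G_n$ has genus at least $2$, the regularity of the cover (so that one may pass to $\bar\Q(t)$) does follow from Bousch's theorem as you assert, and your Riemann--Hurwitz formula in terms of the cycle counts $c_P(H)$ on $G_n/H$ is the right bookkeeping device. So the skeleton is correct and coincides with the paper's.

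The gap is that the ingredient you defer --- ``the precise determination\ldots of the inertia generator as a conjugacy class in $G_n$'' --- is the mathematical heart of the proof, not a black box, and it is less routine than your sketch suggests. Morton's results give only the splitting data $(e_i,f_i)$ of each branch point in the degree-$D$ field generated by a \emph{single} root of $\Phi_n$; converting this into the inertia class in the full Galois closure requires a general statement (the paper's Proposition 3.2: for tame ramification the inertia generator acts on the roots with cycle type read off from the $e_i,f_i$), and then a classification of which conjugacy classes of $(\Z/n\Z)\wr S_r$ realize a given cycle type. Here a genuine subtlety appears that your plan does not anticipate: unlike in $S_D$, cycle type does not determine conjugacy class in the wreath product. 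Already for $n=6$ the type $(2,27)$ at $t=\infty$ is realized by five pairwise non-conjugate classes, and the paper must import extra ramification data for the index-$n$ ``trace'' subfield $F_0$ to decide which class actually occurs. Two further points: $t=1/4$ is not a branch point for $n>1$ (the finite branch points are the roots of the Morton--Vivaldi discriminants $\Delta_{n,d}$, $d\mid n$); and for $n=7,9$ the ``mechanical if lengthy'' bookkeeping is computationally out of reach (the relevant double cosets in $(\Z/7\Z)\wr S_{18}$ and $(\Z/9\Z)\wr S_{56}$ cannot be enumerated), so the paper instead proves genus \emph{lower bounds} that discard the ramification over $\infty$ altogether and bound the number of unramified places over each finite branch point by a centralizer count. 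Without these steps the argument as written does not close.
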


Using this theorem we can provide further evidence in support of Conjecture \ref{positivity_conjecture}. It follows from the theorem that, for the above values of $n$, we have $G_{n,c}\cong(\Z/n\Z)\wr S_r$ for all but finitely many $c\in\Q$. Excluding this finite set we therefore know the structure of all the Galois groups $G_{n,c}$. The Chebotarev density theorem can then be used to determine the value of $\delta(T_{n,c})$ by a straightforward calculation within the group $(\Z/n\Z)\wr S_r$. In this way we obtain the following result.

\begin{thm}\label{density_thms_intro}
There exists a finite set $E\subset\Q$ such that the following lower bounds hold for every $c\in\Q\setminus E$:
\[\delta(T_{5,c})>0.81,\;\delta(T_{6,c})>0.84,\;\delta(T_{7,c})>0.86,\;\delta(T_{9,c})>0.89.\]
\end{thm}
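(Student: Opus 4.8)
The plan is to deduce this from Theorem~\ref{finiteness_theorem_intro} by reducing it to a Chebotarev density computation inside the wreath product $G_n=(\Z/n\Z)\wr S_r$, where $r=\deg\Phi_n/n$; for $n=5,6,7,9$ one computes $r=6,9,18,56$ respectively. First I would fix such an $n$ and let $E^{(n)}$ be the union of $E_n$ (finite, by Theorem~\ref{finiteness_theorem_intro}) with the set of $c\in\Q$ for which $\Phi_n(c,x)$ either is inseparable or shares a root with $\phi_c^{\,d}(x)-x$ for some proper divisor $d\mid n$. Since $\Phi_n$ is irreducible, hence separable, over $\Q(t)$, and since its roots there have exact period $n$ so that $\Phi_n$ is coprime to each $\phi^{\,d}(x)-x$ with $d\mid n$ and $d<n$, the relevant discriminant and resultants are nonzero polynomials in $t$; thus this extra set is finite and $E^{(n)}$ is finite. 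For $c\notin E^{(n)}$ we have $G_{n,c}\cong G_n$, and a root of $\Phi_n(c,x)$ in any overfield of $\Q$ is precisely a point of exact period $n$ of $\phi_c$ there, so $\phi_c$ has a point of period $n$ in $\Q_p$ if and only if $\Phi_n(c,x)$ has a root in $\Q_p$. For all but finitely many $p$ (those unramified in the splitting field of $\Phi_n(c,x)$ and not dividing its discriminant) this holds if and only if the Frobenius class at $p$ fixes one of the roots of $\Phi_n(c,x)$. As a finite set of primes is negligible for Dirichlet density, the Chebotarev density theorem gives
\[
\delta(T_{n,c})=\frac{\#\{\,g\in G_n\;:\;g\text{ acts with no fixed point on the }rn\text{ roots of }\Phi_n\,\}}{|G_n|}.
\]

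Next I would evaluate this proportion group-theoretically. By Bousch's theorem, $G_n$ acts on the $rn$ roots as the natural imprimitive permutation group $(\Z/n\Z)\wr S_r$: the roots split into $r$ blocks of size $n$ (the $\phi$-orbits), with $S_r$ permuting the blocks and the $i$th factor $\Z/n\Z$ cyclically shifting the $i$th block. An element $(\mathbf a,\sigma)$ with $\mathbf a=(a_1,\dots,a_r)\in(\Z/n\Z)^r$ and $\sigma\in S_r$ fixes a point in the $i$th block precisely when $\sigma(i)=i$ and $a_i=0$; hence $(\mathbf a,\sigma)$ is fixed-point-free exactly when $a_i\ne 0$ for every $i$ fixed by $\sigma$. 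Writing $f(\sigma)$ for the number of fixed points of $\sigma$, there are exactly $(n-1)^{f(\sigma)}n^{\,r-f(\sigma)}$ admissible tuples $\mathbf a$, so summing over $\sigma$ and dividing by $|G_n|=n^{r}\,r!$ yields
\[
\delta(T_{n,c})=\frac{1}{r!}\sum_{\sigma\in S_r}\Bigl(1-\tfrac1n\Bigr)^{f(\sigma)}=\sum_{k=0}^{r}\frac{(1-1/n)^{k}}{k!}\cdot\frac{D_{r-k}}{(r-k)!},
\]
where $D_m$ is the number of derangements of $m$ letters and I have used that $S_r$ has $\binom{r}{k}D_{r-k}$ elements with exactly $k$ fixed points.

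Finally I would substitute $(n,r)\in\{(5,6),(6,9),(7,18),(9,56)\}$ and check numerically that the value exceeds the asserted bound; since $D_m/m!\to e^{-1}$, the sum is a close approximation to $e^{-1/n}$, giving roughly $0.8187,\,0.8465,\,0.8669,\,0.8948$, comfortably above $0.81,\,0.84,\,0.86,\,0.89$. Taking $E=\bigcup_n E^{(n)}$ then completes the proof. I expect the only real obstacle to lie in the reduction step: justifying that only finitely many $c$ yield a root of $\Phi_n(c,x)$ of period smaller than $n$ (handled above by the nonvanishing of a resultant), and making sure the $G_n$-action on the roots is genuinely the imprimitive wreath-product action rather than some other faithful degree-$rn$ action — which is exactly what Bousch's construction of $G_n$ provides. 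Once these points are in place the remaining computation is entirely elementary.
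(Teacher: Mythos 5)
Your proposal follows essentially the same route as the paper: reduce to a fixed-point-free count in $(\Z/n\Z)\wr S_r$ via Chebotarev, count $(n-1)^{f(\sigma)}n^{r-f(\sigma)}$ admissible shift-vectors per $\sigma\in S_r$, and evaluate numerically to get $\approx e^{-1/n}$, i.e.\ $0.8187,\,0.8465,\,0.8669,\,0.8948$, exactly as in \S\ref{density_section}. The only point to make explicit is that for $c$ outside the exceptional set one needs $G_{n,c}\equiv G_n$ as \emph{permutation} groups (not merely abstractly isomorphic) in order to transfer the fixed-point-free count; the paper gets this from the specialization theorem (\cite[Chap.\ VII, Thm.\ 2.9]{lang_algebra}) realizing $G_{n,c}$ as a permutation subgroup of $G_n$ together with the order comparison.
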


The proof of Theorem \ref{finiteness_theorem_intro} relies on Hilbert's irreducibility theorem and Faltings's theorem to reduce the proof to a problem of showing that certain algebraic curves have genera greater than 1. More precisely, let $S$ be a splitting field of $\Phi_n$ over $\Q(t)$, so that $G_n=\Gal(S/\Q(t))$, and let $\calX$ be the smooth projective curve over $\Q$ whose function field is $S$.  As explained in \S\ref{preliminaries_section}, in order to show that the set $E_n$ is finite it suffices to show that, for every maximal proper subgroup $M<G_n$, the quotient curve $\calX/M$ has genus greater than 1. Our main objective is therefore to compute the genera of these quotient curves, or at least to obtain lower bounds for them.

The methods we develop for this purpose allow us to reduce the problem to a series of computations within the groups $G_n$. For $n\in\{5,6\}$ we are able to determine the genera exactly, and for $n\in\{7,9\}$ we prove lower bounds which suffice for our purposes. Though the methods used here could in principle be used to extend our results to higher values of $n$, there are computational limitations which prevent this. For instance, the group $G_{11}$ has order $11^{186}(186)!$, and the cost of computing its maximal subgroups is prohibitively expensive. Other computational issues are discussed in \S\ref{bounds_section}.

Though it would be desirable to explicitly determine the finite sets $E_n$ in Theorem \ref{finiteness_theorem_intro}, our method of proof does not suggest a feasible way of doing this. Indeed, one would have to determine the sets of rational points on several curves of very large genera, a problem which seems impossible with current methods. Nevertheless, in \S\ref{exceptional_search_section} we make some elementary observations regarding the sets $E_n$; for instance, they are always nonempty.

This article is organized as follows. In \S\ref{preliminaries_section} we establish two foundational results for the rest of the article. In \S\ref{valuation_theory_section} we prove a theorem concerning the structure of inertia groups in Galois extensions of valued fields; this may be of independent interest. In \S\ref{dynatomic_section} we recall various properties of dynatomic polynomials which were mostly proved by P. Morton. In \S\ref{galois_action_section} we study the action of $G_n$ on the roots of $\Phi_n$. In \S\S\ref{computation_section}-\ref{bounds_section} we apply the results of earlier sections to carry out the genus computations from which Theorem \ref{finiteness_theorem_intro} can be deduced. In \S\ref{density_section} we prove Theorem \ref{density_thms_intro}. Finally, in \S\ref{exceptional_search_section} we list the known elements of the sets $E_n$.

\section{preliminaries}\label{preliminaries_section}

Let $n$ be a positive integer and let $\Phi_n$ be the polynomial  defined in \eqref{dynatomic_definition}. Let $S$ be a splitting field of $\Phi_n$ over $\Q(t)$, and $G_n=\Gal(S/\Q(t))$. Recall that $E_n$ denotes the set of all rational numbers $c$ such that $G_{n,c}\not\cong G_n$, where $G_{n,c}$ is the Galois group of $\Phi_n(c,x)$ over $\Q$. The following lemma provides sufficient conditions for $E_n$ to be a finite set. 

\begin{lem}\label{explicit_HIT_lem}
Let $M_1,\ldots, M_s$ be representatives of all the conjugacy classes of maximal subgroups of $G_n$, and let $L_i$ denote the fixed field of $M_i$. Suppose that every function field $L_i$ has genus greater than 1. Then $E_n$ is finite.
\end{lem}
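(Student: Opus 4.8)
The plan is to use Hilbert's irreducibility theorem in a quantitative form together with the structure of thin sets. Recall that $G_{n,c}$ is always isomorphic to a subgroup of $G_n$, via the standard embedding coming from specialization (the decomposition group at the place $t\mapsto c$ surjects onto $G_{n,c}$ when $\Phi_n(c,x)$ is separable, and reduction of roots gives the embedding into $G_n$). Thus $c\in E_n$ precisely when this subgroup is proper, i.e. when $G_{n,c}$ is contained in some maximal subgroup of $G_n$, up to conjugacy one of the $M_i$. So I would first argue that $E_n\subseteq \bigcup_{i=1}^s E_n^{(i)}$, where $E_n^{(i)}$ is the set of $c\in\Q$ for which the specialized Galois group is contained in a conjugate of $M_i$ (together with a negligible set of $c$ where $\Phi_n(c,x)$ fails to be separable or the specialization is otherwise degenerate — these $c$ are roots of the discriminant of $\Phi_n$ with respect to $x$, hence finite in number).

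Next, I would identify each $E_n^{(i)}$ with the image of rational points on a cover. The field $L_i$ is the fixed field of $M_i$ in $S$, a finite extension of $\Q(t)$, and it is the function field of a smooth projective curve $\calX/M_i$ over $\Q$; let $\pi_i\colon \calX/M_i\to\PP^1$ be the corresponding morphism (the $t$-line as target). The key point, which is the standard group-theoretic translation of specialization behavior, is that if $c\in E_n^{(i)}$ — meaning a conjugate of $M_i$ contains the decomposition group over $t=c$ — then there is a $\Q$-rational point of $\calX/M_i$ lying over $t=c$; more precisely, for all but finitely many $c$ (excluding branch points of $\pi_i$ and the finitely many bad $c$ above), $c$ belongs to $E_n^{(i)}$ only if $\pi_i^{-1}(c)$ contains a rational point. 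Hence $E_n^{(i)}$ is contained, up to a finite set, in $\pi_i\bigl((\calX/M_i)(\Q)\bigr)$.

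Now I invoke Faltings's theorem: since each $L_i$ has genus greater than $1$ by hypothesis, the curve $\calX/M_i$ has genus greater than $1$, so $(\calX/M_i)(\Q)$ is finite. Therefore each $\pi_i\bigl((\calX/M_i)(\Q)\bigr)$ is finite, and adding back the finitely many excluded $c$ (branch points of the $\pi_i$, zeros of the $x$-discriminant of $\Phi_n$, and any $c$ at which $\phi_c$ or the splitting behavior degenerates), we conclude that each $E_n^{(i)}$ is finite. Since $E_n\subseteq\bigcup_{i=1}^s E_n^{(i)}$ and $s$ is finite (there are finitely many conjugacy classes of maximal subgroups of the finite group $G_n$), the set $E_n$ is finite, as claimed.

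The main obstacle in making this rigorous is the second step: carefully justifying that membership $c\in E_n^{(i)}$ forces a rational point on $\calX/M_i$ above $c$, for all but finitely many $c$. This requires relating the decomposition group of a place of $S$ above $t=c$ to the Galois group $G_{n,c}$ of the fiber, and checking that for $c$ outside the branch locus the place is unramified so that the decomposition group is cyclic and coincides (after the usual identifications) with a Frobenius in $G_{n,c}\hookrightarrow G_n$; one then uses that $G_{n,c}$ being non-isomorphic to $G_n$ — hence a proper subgroup — implies it lies in some maximal subgroup, whose fixed field provides the desired rational point by the correspondence between intermediate fields and rational points on covers. The remaining steps (Faltings, finiteness of the number of maximal conjugacy classes, discarding finitely many degenerate $c$) are essentially formal.
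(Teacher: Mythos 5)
Your argument is correct and follows essentially the same route as the paper: the paper's proof simply cites the proof of Proposition 3.3.1 in Serre's \emph{Topics in Galois theory} (and \cite{krumm-sutherland}) for exactly the containment $E_n\subseteq\mathcal E\cup\bigcup_{i=1}^s\pi_i(\calX_i(\Q))$ that you derive by hand via decomposition groups, and then concludes with Faltings's theorem just as you do. The step you flag as the main obstacle --- that a proper specialized Galois group forces a rational point on some $\calX/M_i$ above $t=c$ --- is precisely the content of that cited result, and your sketch of it is sound.
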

\begin{proof}
Let $\calX$ be the smooth projective curve with function field $S$, and for every index $i$, let $\calX_i$ be the quotient curve $\calX/M_i$. It follows from the proof of Proposition 3.3.1 in \cite{serre_topics} (see also \cite[Thm. 1.1]{krumm-sutherland}) that there exist a finite set $\mathcal E\subset\PP^1(\Q)$ and morphisms $\pi_i:\calX_i\to\PP^1$ such that
\[E_n\subseteq \mathcal E\cup\bigcup_{i=1}^s\pi_i(\calX_i(\Q)).\]
Since $L_i$ is the function field of $\calX_i$, the hypotheses imply that the smooth projective model of $\calX_i$ has genus greater than 1, and hence, by Faltings's theorem \cite{faltings}, the set $\calX_i(\Q)$ is finite. The result follows immediately.
\end{proof}

In view of Lemma \ref{explicit_HIT_lem}, the main objects of interest in this article are the genera of the minimal intermediate fields in the extension $S/\Q(t)$. Our first step towards understanding these genera will be to show that in computing them we may replace $\Q$ with any subfield of $\C$.
 
\begin{prop}\label{base_change_prop}
Let $\F$ be any field satisfying $\Q\subseteq\F\subseteq\C$, and let $N$ be a splitting field of $\Phi_n$ over $\F(t)$. Then there is an isomorphism
\[\iota:\Gal(N/\F(t))\longrightarrow\Gal(S/\Q(t))\]
with the following property: if $A$ is a subgroup of $\Gal(N/\F(t))$ and $B=\iota(A)$, then the fixed fields of $A$ and $B$ have the same genus.
\end{prop}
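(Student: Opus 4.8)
The plan is to exhibit the isomorphism $\iota$ by relating both Galois groups to a common "geometric" object over $\C$, and then to invoke standard base-change facts for genus. First I would observe that $\Phi_n$ has coefficients in $\Q\subseteq\F$, so it makes sense to speak of its splitting field over any of the fields $\Q(t)$, $\F(t)$, $\C(t)$; fix compatible algebraic closures so that $S\subseteq N\subseteq$ (a splitting field $S_\C$ of $\Phi_n$ over $\C(t)$). The key input is that the constant field of $S$ is $\Q$ itself: this is exactly the content of Bousch's theorem (quoted in the introduction), which says $\Phi_n$ is irreducible over $\Q(t)$ with geometric Galois group $(\Z/n\Z)\wr S_r$ of the same order as $G_n$ — equivalently, $\overline{\Q}$ is not contained in $S$, i.e.\ $S$ is a regular extension of $\Q$. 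I would state and use this regularity explicitly. Granting it, the restriction map $\Gal(S_\C/\C(t))\to\Gal(S/\Q(t))$ is an isomorphism (a regular extension remains Galois of the same degree after any base change of the constant field within $\overline{\Q}$, hence after base change to $\C$), and likewise $\Gal(S_\C/\C(t))\to\Gal(N/\F(t))$ is an isomorphism. Composing, I get the desired $\iota:\Gal(N/\F(t))\to\Gal(S/\Q(t))$, which by construction is simply ``restrict an automorphism of $S_\C$ to $S$, after extending it from $N$.''

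Next I would transport the subgroup correspondence through $S_\C$. Given $A\le\Gal(N/\F(t))$ with $B=\iota(A)\le\Gal(S/\Q(t))$, let $\widetilde A\le\Gal(S_\C/\C(t))$ be the common preimage of $A$ and of $B$ under the two isomorphisms above. Then the fixed field $N^A$ is an intermediate field of $N/\F(t)$, the fixed field $S^B$ is an intermediate field of $S/\Q(t)$, and $S_\C^{\widetilde A}$ is an intermediate field of $S_\C/\C(t)$; moreover, because the two restriction isomorphisms are compatible with the inclusions $S\subseteq N\subseteq S_\C$, one has $S_\C^{\widetilde A}=N^A\cdot\C(t)$ and $S_\C^{\widetilde A}=S^B\cdot\C(t)$ (compositum inside $S_\C$). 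In other words, both $N^A$ and $S^B$ have $S_\C^{\widetilde A}$ as their base change to $\C$. It remains to check that $N^A$ and $S^B$ are themselves regular over $\F$ and $\Q$ respectively — this follows since they sit inside the regular extensions $N/\F(t)$ and $S/\Q(t)$ (a subextension of a regular extension is regular, the constant field being squeezed between the ground field and the constant field of the top).

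Finally, I would invoke the invariance of genus under constant field extension: if $K/\F(t)$ is a finite separable regular extension with constant field $\F$, then the smooth projective curve with function field $K$ has the same genus as its base change to $\C$, whose function field is $K\cdot\C(t)$. (This is standard; genus is a geometric invariant, unchanged by extension of an algebraically closed — or here, merely a field extension, given regularity — base field. One reference is the behavior of the arithmetic genus under flat base change, or Tate's classical treatment of genus and constant field extensions for function fields in one variable.) Applying this to $K=S^B$ over $\Q$ and to $K=N^A$ over $\F$, both genera equal the genus of the $\C$-curve with function field $S_\C^{\widetilde A}$; hence they are equal to each other, which is the assertion.

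The main obstacle is the regularity of $S$ over $\Q$, i.e.\ that extending the constant field does not collapse or enlarge the Galois group. This is precisely where Bousch's computation of $G_n$ is essential: a priori the splitting field over $\Q(t)$ could acquire extra constants, which would make $\Gal(S_\C/\C(t))$ a proper subgroup of $\Gal(S/\Q(t))$ and break the argument. Once regularity is in hand, everything else is a formal diagram chase together with the standard fact that genus is insensitive to the constant field for regular function field extensions; those steps I would present only in outline.
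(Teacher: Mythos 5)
Your proposal is correct and follows essentially the same route as the paper: both arguments use Bousch's theorem to force the injective restriction maps $\Gal(\Sigma/\C(t))\into\Gal(N/\F(t))\into\Gal(S/\Q(t))$ to be isomorphisms (equivalently, that $\Q$ is algebraically closed in $S$), and then conclude by the invariance of genus under constant field extension. The only cosmetic difference is that you compare the two fixed fields symmetrically through their common compositum with $\C(t)$, whereas the paper shows directly that the fixed field of $A$ is the constant field extension by $\F$ of the fixed field of $B$.
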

\begin{proof}
Let $\Sigma$ be a splitting field of $\Phi_n$ over $\C(t)$, and let $R\subset\Sigma$ be the set of roots of $\Phi_n$. By basic field theory, we may identify $N$ with the field $\F(t)(R)$ and $S$ with the field $\Q(t)(R)$. Restriction of automorphisms then yields injective homomorphisms
\begin{equation}\label{galois_embeddings_auto}
\Gal(\Sigma/\C(t))\into\Gal(N/\F(t))\into\Gal(S/\Q(t)).
\end{equation}
The group $\Gal(\Sigma/\C(t))$ is naturally isomorphic to a subgroup $G_{\C}$ of the symmetric group $\Sym(R)$. (Explicitly, the isomorphism is given by restriction to $R$.) Similarly, we define groups $G_{\F}$ and $G_{\Q}$. By \eqref{galois_embeddings_auto} we have
\begin{equation}\label{galois_embeddings_perm}
G_{\C}\le G_{\F}\le G_{\Q}\le\Sym(R).
\end{equation}
The polynomial $\phi(x)=x^2+t$ permutes the elements of $R$ (see, for instance, \cite[\S2.2]{krumm_lgp}); thus we may regard $\phi$ as an element of the group $\Sym(R)$. Let $\calC$ denote the centralizer of $\phi$ in $\Sym(R)$. Since $\phi$ is a polynomial map, it commutes with every element of $\Gal(S/\Q(t))$, and therefore $G_{\Q}\le\calC$. Now, by Theorem 3 in \cite[Chap. 3]{bousch} we have $G_{\C}=\calC$. Hence, \eqref{galois_embeddings_perm} implies that $G_{\C}=G_{\F}=G_{\Q}$. It follows that the embeddings \eqref{galois_embeddings_auto} are in fact isomorphisms; in particular, restriction to $S$ is an isomorphism
\begin{equation}\label{iota_galois_map}
\iota:\Gal(N/\F(t))\stackrel{\sim}{\longrightarrow}\Gal(S/\Q(t)).
\end{equation}

We now digress briefly from the main proof.
\begin{lem}\label{constant_field_lem}
The field $\Q$ is algebraically closed in $S$.
\end{lem}
\begin{proof}
Let $k$ be the algebraic closure of $\Q$ in $S$. By general theory of algebraic function fields, the extension $k/\Q$ is finite; moreover, it can easily be shown to be a Galois extension. To see that $k/\Q$ is normal, let $p(x)\in\Q[x]$ be an irreducible polynomial having a root in $k$. Then $p$ remains irreducible in $\Q(t)[x]$ (see Lemma 3.1.10 in \cite{stichtenoth}) and has a root in $S$; therefore $p$ splits in $S$. However, by definition of $k$, every root of $p$ in $S$ belongs to $k$. Hence, $p$ splits in $k$.

Since $k(t)$ is the composite of $k$ and $\Q(t)$, the extension $k(t)/\Q(t)$ is Galois, and restriction to $k$ yields an isomorphism
\[\Gal(k(t)/\Q(t))\cong\Gal(k/k\cap\Q(t))=\Gal(k/\Q).\]
It follows that there is a surjective homomorphism $\Gal(S/\Q(t))\to\Gal(k/\Q)$ with kernel $H:=\Gal(S/k(t))$. Now, taking $\F=k$ in \eqref{iota_galois_map}, the image of $\iota$ is clearly contained in $H$, so that in fact $H=\Gal(S/\Q(t))$. Therefore $\Gal(k/\Q)$ must be trivial, and $k=\Q$.
\end{proof}

Returning to the proof of the proposition, let $A\le\Gal(N/\F(t))$ and set $B=\iota(A)$. Let $U$ and $V$ be the fixed fields of $A$ and $B$, respectively. Thus, $U$ and $V$ are intermediate fields in the extensions $N/\F(t)$ and $S/\Q(t)$. We claim that $U$ is the composite of $V$ and $\F$. The fact that $U\supseteq V$ follows immediately from the definitions, and it is clear that $U\supseteq\F$; hence $U\supseteq V\F$. To prove that $U=V\F$ we will show that $[U:\F(t)]=[V\F:\F(t)]$. Since $\iota$ is an isomorphism mapping $A$ to $B$, we have
\[[U:\F(t)]=|\Gal(N/\F(t)):A|=|\Gal(S/\Q(t)):B|=[V:\Q(t)].\]
Thus, it suffices to show that $[V:\Q(t)]=[V\F:\F(t)]$. Let $\alpha$ be a primitive element for $V$ over $\Q(t)$, and let $p \in\Q(t)[x]$ be the minimal polynomial of $\alpha$. Clearly $V\F=\F(t)(\alpha)$, so it is enough to show that $p$ remains irreducible over $\F(t)$. Since $p$ is irreducible over $\Q(t)$, the group $\Gal(S/\Q(t))$ acts transitively on the roots of $p$. This, together with the fact that $\iota$ is given by restriction to $N$, imply that $\Gal(N/\F(t))$ also acts transitively on the roots of $p$, and therefore $p$ is irreducible over $\F(t)$. This completes the proof that $U=V\F$.

It remains only to show that $U$ and $V$ have the same genus. Since $\F$ contains the constant field of $V$ (by Lemma \ref{constant_field_lem}), $U=V\F$ is a constant field extension of $V$ (in the terminology of \cite[\S3.6]{stichtenoth}). Equality between the genera of $U$ and $V$ now follows from
Theorem 22 in \cite[p. 291]{artin}; see also Theorem 3.6.3 in \cite{stichtenoth}.
\end{proof}

From Lemma \ref{explicit_HIT_lem} and Proposition \ref{base_change_prop} we deduce the following proposition, which is the key result of this section.

\begin{prop}\label{En_finiteness_prop}
Let $N$ be a splitting field of $\Phi_n$ over $\bar\Q(t)$. Let $M_1,\ldots, M_s$ be representatives of all the conjugacy classes of maximal subgroups of the group $G=\Gal(N/\bar\Q(t))$, and let $L_i$ be the fixed field of $M_i$. Suppose that the genus of $L_i$ is greater than 1 for every index $i$. Then the set $E_n$ is finite.
\end{prop}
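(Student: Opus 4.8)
The plan is to obtain this proposition as a formal consequence of Lemma~\ref{explicit_HIT_lem} together with the base-change statement of Proposition~\ref{base_change_prop}. First I would apply Proposition~\ref{base_change_prop} with $\F=\bar\Q$; this is legitimate since $\bar\Q$, viewed as the algebraic closure of $\Q$ inside $\C$, satisfies $\Q\subseteq\bar\Q\subseteq\C$. The proposition then supplies a group isomorphism $\iota:\Gal(N/\bar\Q(t))\longrightarrow\Gal(S/\Q(t))=G_n$ with the property that for every subgroup $A\le\Gal(N/\bar\Q(t))$, the fixed field of $A$ in $N$ and the fixed field of $\iota(A)$ in $S$ have the same genus.

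Next I would transport the hypothesis across $\iota$. Being an isomorphism of abstract groups, $\iota$ carries maximal subgroups to maximal subgroups and preserves conjugacy of subgroups; consequently $\iota(M_1),\ldots,\iota(M_s)$ is a complete set of representatives of the conjugacy classes of maximal subgroups of $G_n$. By the genus-preserving clause of Proposition~\ref{base_change_prop}, the fixed field in $S$ of each $\iota(M_i)$ has the same genus as $L_i$, which by assumption is greater than $1$. Hence the hypotheses of Lemma~\ref{explicit_HIT_lem} are met for $G_n$ with the representatives $\iota(M_1),\ldots,\iota(M_s)$, and that lemma yields the finiteness of $E_n$.

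I do not anticipate any real obstacle here: the argument is essentially bookkeeping. The only points that require care are to confirm that Proposition~\ref{base_change_prop} does apply to $\bar\Q$ and that the isomorphism it produces simultaneously respects the subgroup lattice (so conjugacy classes of maximal subgroups match up) and the genera of fixed fields — both of which are already built into its statement — so that passing from the absolute situation over $\bar\Q(t)$ to the arithmetic situation over $\Q(t)$ is harmless.
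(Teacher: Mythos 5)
Your proposal is correct and is exactly the deduction the paper intends: the paper states this proposition with the remark that it follows "from Lemma \ref{explicit_HIT_lem} and Proposition \ref{base_change_prop}," and your argument—applying Proposition \ref{base_change_prop} with $\F=\bar\Q$, transporting the conjugacy classes of maximal subgroups and their fixed-field genera across $\iota$, and then invoking Lemma \ref{explicit_HIT_lem}—is precisely that deduction, with the relevant checks (that $\bar\Q$ is admissible and that $\iota$ respects maximality, conjugacy, and genera) made explicit.
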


\section{A result in valuation theory}\label{valuation_theory_section}

Let $K$ be a field, and let $v:K^{\ast}\to\R$ be a discrete valuation of $K$ with perfect residue field $k$. Let $N$ be a finite Galois extension of $K$ with Galois group $G=\Gal(N/K)$. For any elements $\sigma,\tau\in G$ we will write $\tau^{\sigma}$ to denote the conjugate $\sigma^{-1}\tau\sigma$; similarly, for any subgroup $A\le G$ we let $A^{\sigma}=\sigma^{-1}A\sigma$.

If $L$ is an intermediate field in the extension $N/K$ and $w$ is a valuation of $N$ extending a valuation $u$ of $L$, we denote by $D_{w|u}$ and $I_{w|u}$ the decomposition and inertia groups of $w$ over $u$. If $u$ extends the valuation $v$ of $K$, we let $e_{u|v}$ and $f_{u|v}$ denote the ramification index and residue degree of $u$ over $v$.

\begin{lem}\label{double_coset_bijection_lem}
Let $w$ be a valuation of $N$ extending $v$, and let $D=D_{w|v}$ and $I=I_{w|v}$. Let $H$ be a subgroup of $G$ with fixed field $L$, and let $S_L$ be the set of all valuations of $L$ extending $v$. Then there is a well-defined bijection
\[D\backslash G/H\stackrel{\sim}{\longrightarrow} S_L\]
given by $D\sigma H\mapsto (w\circ\sigma)|_L$. Furthermore, if $u=(w\circ\sigma)|_L$, then
\begin{equation}\label{double_coset_ef}
e_{u|v}\cdot f_{u|v}=|D^{\sigma}:D^{\sigma}\cap H|\;\;\text{and}\;\; e_{u|v}=|I^{\sigma}:I^{\sigma}\cap H|.
\end{equation}
\end{lem}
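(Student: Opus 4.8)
The plan is to recognize this as the standard correspondence between double cosets and extensions of a valuation, adapted to the language used in this paper. First I would recall that since $N/K$ is Galois, the group $G$ acts transitively on the set of valuations of $N$ extending $v$: any two such valuations differ by composition with some $\sigma\in G$. Given a valuation $w$ of $N$ extending $v$ and a subgroup $H\le G$ with fixed field $L$, every valuation $u$ of $L$ extending $v$ has \emph{some} extension to $N$, and that extension is $w\circ\sigma$ for some $\sigma\in G$; restricting to $L$ then shows $u=(w\circ\sigma)|_L$. So the map $\sigma\mapsto (w\circ\sigma)|_L$ surjects $G$ onto $S_L$. The content is to identify its fibers.

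The key step is the fiber computation. I would argue that $(w\circ\sigma)|_L=(w\circ\tau)|_L$ if and only if $D\sigma H=D\tau H$. Indeed, the valuations $w\circ\sigma$ and $w\circ\tau$ of $N$ restrict to the same valuation of $L$ precisely when there is $\rho\in H=\Gal(N/L)$ with $w\circ\sigma\circ\rho = w\circ\tau$ as valuations of $N$; by transitivity of the Galois action on extensions of $v$ and the definition of the decomposition group, $w\circ\sigma\circ\rho$ and $w\circ\tau$ agree exactly when $\tau(\sigma\rho)^{-1}\in D_{w|v}=D$, i.e. $\tau\in D\sigma H$. (Here one uses that $D_{w\circ\alpha\,|\,v}=\alpha^{-1}D_{w|v}\alpha=D^{\alpha}$, which handles the bookkeeping of which side the conjugation falls on.) This establishes the well-defined bijection $D\backslash G/H\xrightarrow{\sim}S_L$.

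For the formulas in \eqref{double_coset_ef}, fix $\sigma$ and set $u=(w\circ\sigma)|_L$. Let $w'=w\circ\sigma$, an extension of $u$ to $N$; its decomposition and inertia groups over $v$ are $D_{w'|v}=D^{\sigma}$ and $I_{w'|v}=I^{\sigma}$, while those over $u$ are $D_{w'|u}=D^{\sigma}\cap H$ and $I_{w'|u}=I^{\sigma}\cap H$, since $H=\Gal(N/L)$ and the decomposition (resp. inertia) group over a subfield is the intersection of the one over $K$ with that subfield's Galois group. Now $N/L$ is Galois, so the local degree $[N_{w'}:L_u]$ equals $|D_{w'|u}|=|D^{\sigma}\cap H|$ and the local ramification index equals $|I_{w'|u}|=|I^{\sigma}\cap H|$; likewise $[N_{w'}:K_v]=|D^{\sigma}|$ with local ramification $|I^{\sigma}|$. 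Since $k$ is perfect the residue extensions are separable, and the tower formula $[N_{w'}:K_v]=[N_{w'}:L_u]\cdot[L_u:K_v]$ gives $e_{u|v}f_{u|v}=[L_u:K_v]=|D^{\sigma}|/|D^{\sigma}\cap H|=|D^{\sigma}:D^{\sigma}\cap H|$; the analogous computation with ramification indices (using $e_{N_{w'}/K_v}=e_{N_{w'}/L_u}\cdot e_{u|v}$, valid since these are genuine local degrees) yields $e_{u|v}=|I^{\sigma}:I^{\sigma}\cap H|$.

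I do not expect a serious obstacle here; the only delicate points are purely notational — keeping straight whether conjugation by $\sigma$ lands on $D$ or on $H$, which is forced by the convention $\tau^\sigma=\sigma^{-1}\tau\sigma$ and by the fact that $D_{w\circ\sigma\,|\,v}=\sigma^{-1}D_{w|v}\sigma$ — and making sure the identification of $D_{w'|u}$ and $I_{w'|u}$ with $D^\sigma\cap H$ and $I^\sigma\cap H$ is invoked correctly. The perfectness of $k$ is used only to guarantee that residue field extensions contribute no inseparability, so that the local degrees multiply cleanly as $e\cdot f$.
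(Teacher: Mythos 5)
Your argument is correct and matches the paper's proof in substance: the identifications $D_{w\circ\sigma|u}=D^{\sigma}\cap H$, $I_{w\circ\sigma|u}=I^{\sigma}\cap H$ together with $|D_{\tilde w|v}|=e_{\tilde w|v}f_{\tilde w|v}$ and $|I_{\tilde w|v}|=e_{\tilde w|v}$ (valid since $k$ is perfect) and multiplicativity in the tower $K\subseteq L\subseteq N$ are exactly how the paper derives \eqref{double_coset_ef}. The only difference is that you supply the standard orbit/fiber argument for the double-coset bijection, whereas the paper simply cites Lemma 17.1.2 and Corollary 17.1.3 of Efrat for that part.
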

\begin{proof}
The first statement is well known; a proof may be found in Lemma 17.1.2 and Corollary 17.1.3 of \cite{efrat}. Suppose now that $u=(w\circ\sigma)|_L$, and let $\tilde w=w\circ\sigma$. It is then a simple exercise to show that 
\begin{equation}\label{decomposition_inertia_formula}
D_{\tilde w|u}=D^{\sigma}\cap H\;\;\text{and}\;\;I_{\tilde w|u}=I^{\sigma}\cap H.
\end{equation}
Note that $D^{\sigma}=D_{\tilde w|v}$ and $I^{\sigma}=I_{\tilde w|v}$. Now, since $k$ is perfect, we have $|D_{\tilde w|v}|=e_{\tilde w|v}\cdot f_{\tilde w|v}$ and $|I_{\tilde w|v}|=e_{\tilde w|v}$ (see \cite[Chap. I, Prop. 9.6]{neukirch}). The relations \eqref{double_coset_ef} now follow easily from \eqref{decomposition_inertia_formula}.
\end{proof}

\begin{prop}\label{inertia_cycle_type_prop}
Suppose that $N$ is the splitting field of an irreducible polynomial $P(x)\in K[x]$. Let $F$ be a subextension of $N/K$ obtained by adjoining one root of $P(x)$ to $K$. Let $u_1,\ldots, u_m$ be the distinct valuations of $F$ extending $v$, and set $e_i=e_{u_i|v}$ and $f_i=f_{u_i|v}$. Let $w$ be a valuation of $N$ extending $v$, and assume that $e_{w|v}$ is not divisible by the characteristic of $k$. Then the inertia group $I_{w|v}$ is generated by an element whose disjoint cycle decomposition (as a permutation of the roots of $P$) has the form
\begin{equation}\label{inertia_generator_cycle_type}
\underbrace{(e_1\text{-cycle})\cdots(e_1\text{-cycle})}_{f_1\;\text{times}}\cdots\underbrace{(e_m\text{-cycle})\cdots(e_m\text{-cycle})}_{f_m\;\text{times}}.
\end{equation}
\end{prop}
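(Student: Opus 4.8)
The plan is to translate the statement into a question about group actions. Fix the root $\alpha$ of $P$ with $F=K(\alpha)$, put $H=\Gal(N/F)$, and note that $gH\mapsto g\alpha$ gives a $G$-equivariant bijection from the left coset space $G/H$ (with its translation action) onto the set of roots of $P$; thus the disjoint cycle decomposition of any $\tau\in G$, viewed as a permutation of the roots, records exactly the multiset of sizes of the orbits of $\tau$ on $G/H$. Write $D=D_{w|v}$ and $I=I_{w|v}$.

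First I would show that $I$ is cyclic. By hypothesis $p:=\operatorname{char}(k)$ does not divide $e_{w|v}$, and, since $k$ is perfect, $|I|=e_{w|v}$ (as already recorded in the proof of Lemma \ref{double_coset_bijection_lem}). Standard ramification theory for discrete valuations with perfect residue field (see, e.g., \cite{neukirch}) gives an exact sequence $1\to I_1\to I\to C\to 1$ in which $I_1$ is a $p$-group and $C$ is cyclic; since $p\nmid|I|$ this forces $I_1=1$, so $I\cong C$ is cyclic. Fix a generator $\sigma_0$ of $I$. On each single orbit $\Omega$ of $\langle\sigma_0\rangle$ in $G/H$, the generator $\sigma_0$ acts as one cycle of length $|\Omega|$ (and the same holds for any generator of $I$, since $\gcd$'s work out), so it remains only to count the orbits of $I$ on $G/H$ and their sizes.

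Next I would compare the $I$-orbits with the $D$-orbits. The orbits of $D$ on $G/H$ are indexed by $D\backslash G/H$, which by Lemma \ref{double_coset_bijection_lem} corresponds bijectively to the valuations $u_1,\dots,u_m$ of $F$ over $v$; by \eqref{double_coset_ef}, if the double coset $D\sigma H$ corresponds to $u_i$ then the associated $D$-orbit has size $|D^{\sigma}:D^{\sigma}\cap H|=e_if_i$. Every point of that $D$-orbit has the form $\sigma'H$ with $\sigma'\in D\sigma H$, and a direct stabilizer computation shows its $I$-orbit has size $|I^{\sigma'}:I^{\sigma'}\cap H|$; writing $\sigma'=d\sigma h$ with $d\in D$, $h\in H$ and using $I\trianglelefteq D$ (so $d^{-1}Id=I$), one checks $|I^{\sigma'}:I^{\sigma'}\cap H|=|I^{\sigma}:I^{\sigma}\cap H|=e_i$. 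Hence all $I$-orbits inside the $D$-orbit attached to $u_i$ have size $e_i$, and there are exactly $e_if_i/e_i=f_i$ of them. Summing over $i$, the cycle decomposition of $\sigma_0$ as a permutation of the roots of $P$ is precisely \eqref{inertia_generator_cycle_type}.

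The one point requiring care is the orbit count in the last paragraph: one must verify that, within a single $D$-orbit, \emph{every} $I$-orbit has the same size $e_i$, and this is exactly where the normality $I\trianglelefteq D$ is used. It is this uniformity that yields the clean block structure ``$f_i$ cycles of length $e_i$'' rather than a less symmetric partition. Everything else is routine: the identification of the roots with $G/H$, the cyclicity of $I$ granted the characteristic hypothesis, and the elementary fact that a cyclic group acts as a single cycle on each of its orbits.
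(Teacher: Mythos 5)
Your argument is correct, and it rests on the same two pillars as the paper's proof: the cyclicity of $I=I_{w|v}$ (forced by the characteristic hypothesis together with $|I|=e_{w|v}$), and the dictionary of Lemma \ref{double_coset_bijection_lem} between the double cosets in $D\backslash G/H$ and the valuations $u_1,\ldots,u_m$, with the index formulas \eqref{double_coset_ef}. The difference is one of organization, and your version is leaner. You first decompose the set of roots of $P$, identified with $G/H$, into $D$-orbits --- a partition obtained for free from the orbit decomposition of a group action, the orbit attached to $u_i$ having size $|D^{\sigma}:D^{\sigma}\cap H|=e_if_i$ --- and then refine each $D$-orbit into $I$-orbits, using $I\trianglelefteq D$ exactly as you indicate to show that all $I$-orbits inside a given $D$-orbit share the common size $e_i$, whence there are $e_if_i/e_i=f_i$ of them. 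The paper instead constructs injections $\psi_i$ from the double coset spaces $I^{\sigma_i}\backslash D^{\sigma_i}/(D^{\sigma_i}\cap H)$ into the set of $I$-orbits, and must then separately verify that the images $S_i$ are pairwise disjoint and that they exhaust all orbits, the latter via the identity $[F:K]=\sum_ie_if_i$. Your route renders both of those verifications automatic, since $I$-orbits lying in distinct $D$-orbits are trivially distinct and the $D$-orbits already cover $G/H$. The one computation common to both proofs --- and the crux --- is the conjugation argument showing $|I^{\sigma'}:I^{\sigma'}\cap H|=|I^{\sigma}:I^{\sigma}\cap H|=e_i$ for every $\sigma'\in D\sigma H$, which you carry out correctly.
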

\begin{proof}
Set $D=D_{w|v}$ and $I=I_{w|v}$. The assumption that the characteristic of $k$ does not divide $|I|$ implies that $I$ is a cyclic group; see \cite[Prop. 3.8.5]{stichtenoth} or \cite[\S16.2]{efrat}. Let $R$ denote the set of roots of $P(x)$ in $N$, and consider the natural action of $I$ on $R$. Let $\O$ be the set of orbits of this action. We will show that $\O$ can be partitioned into subsets $S_1,\ldots, S_m$ such that every orbit in $S_i$ has cardinality $e_i$, and $\#S_i=f_i$. Note that this implies that every generator of $I$ has a cycle decomposition of the form \eqref{inertia_generator_cycle_type}.

For every $x\in R$ let $\O_x$ and $I_x$, respectively, denote the orbit of $x$ (under the action of $I$) and the stabilizer of $x$ in $I$. Let $r\in R$ be such that $F=K(r)$, and set $H=\Gal(N/F)$. Note that $H$ is the stabilizer of $r$ in $G$.

By Lemma \ref{double_coset_bijection_lem}, there exist distinct double cosets $D\sigma_1H,\ldots, D\sigma_mH$ such that $u_i=(w\circ\sigma_i)|_F$. For $i=1,\ldots, m$ we define a map $\psi_i$ as follows:
\begin{align*}
I^{\sigma_i}\backslash D^{\sigma_i}/(D^{\sigma_i}\cap H)\;&\stackrel{\psi_i}{\longrightarrow}\;\O, \\
I^{\sigma_i}\tau (D^{\sigma_i}\cap H) &\longmapsto \O_{\sigma_i\tau(r)}.
\end{align*}
A straightforward calculation shows that $\psi_i$ is well defined and injective. Letting $S_i\subseteq\O$ be the image of $\psi_i$, we claim that the sets $S_1,\ldots, S_m$ have the properties stated above.

We begin by showing that every orbit in $S_i$ has cardinality $e_i$. To ease notation, let us fix an index $i$ and set $\sigma=\sigma_i$ and $M=D^{\sigma}\cap H$. Letting $\tau\in D^{\sigma}$, we must show that $\#\O_{\sigma\tau(r)}=e_i$. Note that $(I_{\sigma\tau(r)})^{\sigma\tau}=I^{\sigma\tau}\cap H$, so that $|I_{\sigma\tau(r)}|=|I^{\sigma\tau}\cap H|$, and therefore
\[\#\O_{\sigma\tau(r)}=|I:I_{\sigma\tau(r)}|=\frac{|I|}{|I_{\sigma\tau(r)}|}=\frac{|I^{\sigma\tau}|}{|I_{\sigma\tau(r)}|}=\frac{|I^{\sigma\tau}|}{|I_{\sigma\tau}\cap H|}=|I^{\sigma\tau}:I^{\sigma\tau}\cap H|.\]
Now, since $\tau\in D^{\sigma}$, we have $\sigma\tau\in D\sigma H$. Lemma \ref{double_coset_bijection_lem} then implies that $(w\circ\sigma\tau)|_F=(w\circ\sigma)|_F=u_i$ and $|I^{\sigma\tau}:I^{\sigma\tau}\cap H|=e_i$. Hence $\#\O_{\sigma\tau(r)}=e_i$.

Next we show that $\#S_i=f_i$. Note that $\#S_i=\#I^{\sigma}\backslash D^{\sigma}/M$ since $\psi_i$ is injective. The fact that $I$ is a normal subgroup of $D$ implies that
\[I^{\sigma}\backslash D^{\sigma}/M=D^{\sigma}/(I^{\sigma}M).\]
Thus, using Lemma \ref{double_coset_bijection_lem} we obtain
\[\#S_i=|D^{\sigma}|/|I^{\sigma}M|=\frac{|D^{\sigma}|\cdot |I^{\sigma}\cap H|}{|D^{\sigma}\cap H|\cdot|I^{\sigma}|}=\frac{|D^{\sigma}:D^{\sigma}\cap H|}{|I^{\sigma}:I^{\sigma}\cap H|}=\frac{e_if_i}{e_i}=f_i.\]

Now we show that the sets $S_1,\ldots, S_m$ are pairwise disjoint. Suppose, by contradiction, that there exist distinct indices $i, j$ such that $S_i\cap S_j\ne\emptyset$. Then there exist $\alpha\in D^{\sigma_i}$, $\beta\in D^{\sigma_j}$, and $\gamma\in I$ such that $\sigma_i\alpha(r)=\gamma\sigma_j\beta(r)$. Writing $\alpha=\sigma_i^{-1}\delta\sigma_i$ and $\beta=\sigma_j^{-1}d\sigma_j$ with $\delta,d\in D$, this implies that $\delta\sigma_i(r)=\gamma d\sigma_j(r)$; hence, there exists $h\in H$ such that $\sigma_i=\delta^{-1}\gamma d\sigma_jh$. Note that $\delta^{-1}\gamma d\in D$, so the previous equality implies that $\sigma_i\in D\sigma_jH$ and therefore $D\sigma_iH=D\sigma_jH$, a contradiction.

Finally, we show that $\O=\cup_{i=1}^mS_i$. Let $R_1,\ldots, R_m$ be the subsets of $R$ defined by $R_i=\cup_{C\in S_i}C$. From the results proved above it follows that $\#R_i=e_if_i$ and that the sets $R_1,\ldots, R_m$ are pairwise disjoint. Given that $v$ is a discrete valuation, we have the relation $[F:K]=\sum_{i=1}^me_if_i$. Hence
\[\#R=\deg(P)=[F:K]=\sum_{i=1}^me_if_i=\sum_{i=1}^m\#R_i=\#\bigcup_{i=1}^mR_i.\]
It follows that $R=\cup_{i=1}^mR_i$, which implies that $\O=\cup_{i=1}^mS_i$.
\end{proof}

\begin{rem}
Proposition \ref{inertia_cycle_type_prop} was inspired by a theorem of Beckmann~\cite{beckmann} concerning inertia groups in Galois extensions of $\Q$; indeed, Beckmann's result is essentially the case $K=\Q$ of the proposition. However, the proof given here has little in common with the proof in ~\cite{beckmann}.
\end{rem}

\begin{prop}\label{unramified_valuation_count}
With notation and assumptions as in Proposition \ref{inertia_cycle_type_prop}, let $\gamma$ be a generator of $I_{w|v}$ and let $H$ be a subgroup of $G$ with fixed field $L$. Suppose that $D_{w|v}=I_{w|v}$. Then the number of valuations $u$ of $L$ extending $v$ such that $e_{u|v}=1$ is given by
\begin{equation}\label{unramified_valuation_formula}
\frac{|C_G(\gamma)|\cdot s(H,\gamma)}{|H|},
\end{equation}
where $C_G(\gamma)$ is the centralizer of $\gamma$ in $G$ and $s(H,\gamma)$ is the number of $G$-conjugates of $\gamma$ that belong to $H$.
\end{prop}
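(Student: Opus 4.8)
The plan is to translate the problem into a count of certain double cosets using Lemma~\ref{double_coset_bijection_lem}, and then to evaluate that count by an elementary orbit-counting argument.

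First I would apply Lemma~\ref{double_coset_bijection_lem} with the subgroup $H$. Writing $I=I_{w|v}$, and using the hypothesis $D_{w|v}=I_{w|v}$, the lemma gives a bijection between the valuations $u$ of $L$ extending $v$ and the double cosets in $I\backslash G/H$, under which $u=(w\circ\sigma)|_L$ corresponds to $I\sigma H$; moreover, for such $u$ it gives $e_{u|v}=|I^{\sigma}:I^{\sigma}\cap H|$. Hence $e_{u|v}=1$ if and only if $I^{\sigma}\subseteq H$. Since we are working under the assumptions of Proposition~\ref{inertia_cycle_type_prop}, the characteristic of $k$ does not divide $|I|$, so $I$ is cyclic; as $\gamma$ generates $I$, the condition $I^{\sigma}\subseteq H$ becomes simply $\sigma^{-1}\gamma\sigma\in H$. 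Thus the quantity to be computed is the number of double cosets $I\sigma H$ with $\sigma^{-1}\gamma\sigma\in H$.

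Next I would introduce the set $X=\{\sigma\in G : \sigma^{-1}\gamma\sigma\in H\}$ and establish two facts. First, $X$ is a union of $(I,H)$-double cosets: for $\sigma\in X$, $i\in I$, $h\in H$ we have $(i\sigma h)^{-1}\gamma(i\sigma h)=h^{-1}\sigma^{-1}(i^{-1}\gamma i)\sigma h=h^{-1}(\sigma^{-1}\gamma\sigma)h\in H$, using $i^{-1}\gamma i=\gamma$ (valid since $\gamma\in I$ and $I$ is abelian). Second, if $\sigma\in X$ then the double coset $I\sigma H$ coincides with the single right coset $\sigma H$, because $I\sigma H=\sigma H$ is equivalent to $I\subseteq\sigma H\sigma^{-1}$, i.e.\ to $I^{\sigma}\subseteq H$; in particular each double coset contained in $X$ has exactly $|H|$ elements. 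Combining these, the number of double cosets inside $X$ equals the number of right cosets of $H$ inside $X$, namely $|X|/|H|$.

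It remains to compute $|X|$. Let $\gamma_1,\dots,\gamma_k$ be the distinct $G$-conjugates of $\gamma$ that lie in $H$, so that $k=s(H,\gamma)$. The condition $\sigma\in X$ says exactly that $\sigma^{-1}\gamma\sigma=\gamma_j$ for some (unique) $j$, and for fixed $j$ the solution set $\{\sigma\in G:\sigma^{-1}\gamma\sigma=\gamma_j\}$ is a right coset of $C_G(\gamma)$, hence has cardinality $|C_G(\gamma)|$. These $k$ cosets are pairwise disjoint with union $X$, so $|X|=|C_G(\gamma)|\cdot s(H,\gamma)$. Therefore the number of valuations $u$ of $L$ with $e_{u|v}=1$ equals $|X|/|H|=|C_G(\gamma)|\cdot s(H,\gamma)/|H|$, which is formula~\eqref{unramified_valuation_formula}.

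I do not anticipate a genuine obstacle: the argument is purely formal once Lemma~\ref{double_coset_bijection_lem} is available. The only points needing care are the verification that $X$ is stable under left multiplication by $I$ — which is where the abelianness of $I$, and hence the tameness hypothesis on $e_{w|v}$, is used — and the observation that every double coset meeting $X$ degenerates to a single right coset, which is what makes the naive ratio $|X|/|H|$ the correct count.
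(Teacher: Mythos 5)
Your proof is correct and follows essentially the same route as the paper: both reduce via Lemma~\ref{double_coset_bijection_lem} to counting double cosets $I\sigma H$ with $\gamma^{\sigma}\in H$, compute the size of $\{\sigma : \gamma^{\sigma}\in H\}$ as $|C_G(\gamma)|\cdot s(H,\gamma)$ by partitioning into right cosets of the centralizer, and observe that each relevant double coset has exactly $|H|$ elements (the paper does this by computing the fibers of $\sigma\mapsto D\sigma H$ directly, you by noting $I\sigma H=\sigma H$ when $I^{\sigma}\subseteq H$ --- the same fact in slightly different clothing). No gaps.
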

\begin{proof}
Let $D=D_{w|v}$ and define sets $A=\{\sigma\in G\;\vert\;\gamma^{\sigma}\in H\}$ and
\[\Delta=\{D\sigma H\in D\backslash G/H\;\vert\;\sigma\in A\}.\]
It follows from Lemma \ref{double_coset_bijection_lem} that the cardinality of $\Delta$ is equal to the number of valuations $u$ of $L$ extending $v$ such that $e_{u|v}=1$. Thus, in order to prove the proposition it suffices to show that $|H|\cdot(\#\Delta)=|C_G(\gamma)|\cdot s(H,\gamma)$. 

For every element $a\in A$ the right coset $C_G(\gamma)\cdot a$ is contained in $A$; hence, the set $U=\{C_G(\gamma)\cdot a\;\vert\;a\in A\}$ is a partition of $A$ into subsets of size $|C_G(\gamma)|$. Thus $\#A=|C_G(\gamma)|\cdot(\#U)$. Now let $B=\{\gamma^{\sigma}\;\vert\;\sigma\in G\}\cap H$, so that $\#B=s(H,\gamma)$. Note that $\#U=\#B$; indeed, there is a bijective map $U\to B$ given by $C_G(\gamma)\cdot a\mapsto\gamma^a$. Therefore,
\begin{equation}\label{AB_size}
\#A=|C_G(\gamma)|\cdot(\#B)={|C_G(\gamma)|\cdot s(H,\gamma)}.
\end{equation}
Let $f:A\onto\Delta$ be the surjective map given by $f(\sigma)=D\sigma H$. We claim that, for every $a\in A$, $f^{-1}(f(a))=aH$. It is clear that $aH\subseteq f^{-1}(f(a))$. Now suppose that $f(a')=f(a)$, so that $a'=dah$ for some $d\in D$ and $h\in H$. Since $\gamma^a\in H$, we may write $\gamma a=ah'$ for some $h'\in H$. Furthermore, since $D=I_{w|v}=\langle\gamma\rangle$, we have $d=\gamma^n$ for some positive integer $n$. Thus \[a'=dah=\gamma^nah=a(h')^nh\in aH,\]
which proves the claim. Since every fiber of $f$ has cardinality $|H|$, we have $\#A=|H|\cdot(\#\Delta)$, and hence, by \eqref{AB_size}, $|H|\cdot (\#\Delta)=|C_G(\gamma)|\cdot s(H,\gamma)$.
\end{proof}

For later reference, we include here a combined statement of Propositions \ref{inertia_cycle_type_prop} and \ref{unramified_valuation_count} in the special case where $K$ is the function field $\bar\Q(t)$ and the valuation $v$ corresponds to a place $p$ of $K$. Note that in this case all residue degrees $f_{u|v}$ are equal to 1.

\begin{cor}\label{function_field_inertia_cor}
Let $t$ be an indeterminate and $K=\bar\Q(t)$. Suppose that $P(x)\in K[x]$ is irreducible, and let $N$ be a splitting field for $P(x)$. Let $F$ be a subextension of $N/K$ obtained by adjoining one root of $P(x)$ to $K$. Let $p$ be a place of $K$, and let $\p_1,\ldots, \p_m$ be the distinct places of $F$ lying over $p$. Then, for every place $\P$ of $N$ lying over $p$, the inertia group $I_{\P|p}$ is generated by an element $\gamma$ whose disjoint cycle decomposition has the form $(e_1\text{-cycle})\cdots(e_m\text{-cycle})$, where $e_i$ is the ramification index of $\p_i$ over $p$. Furthermore, if $H$ is a subgroup of $G=\Gal(N/K)$ with fixed field $L$, then the number of places of $L$ lying over $p$ which are unramified over $K$ is given by the formula \eqref{unramified_valuation_formula}. 
\end{cor}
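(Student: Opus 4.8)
The plan is to deduce the corollary directly from Propositions \ref{inertia_cycle_type_prop} and \ref{unramified_valuation_count} by checking that their hypotheses hold in the present setting. First I would observe that a place $p$ of $K=\bar\Q(t)$ corresponds to a discrete valuation $v$ whose residue field $k$ equals $\bar\Q$: the finite places correspond to the linear polynomials $t-a$ with $a\in\bar\Q$, and the place at infinity likewise has residue field $\bar\Q$. In particular $k$ is algebraically closed, hence perfect, and has characteristic $0$.

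Since $\operatorname{char} k=0$, the ramification index $e_{\P|p}$ is never divisible by $\operatorname{char} k$, so Proposition \ref{inertia_cycle_type_prop} applies with $P(x)$, $N$, $F$ as given and $w=\P$. It yields that $I_{\P|p}$ is generated by an element whose disjoint cycle decomposition on the roots of $P$ has the shape \eqref{inertia_generator_cycle_type}. Because $k=\bar\Q$ is algebraically closed, every residue field extension attached to a place $\p_i$ of $F$ is trivial, so $f_i=f_{\p_i|p}=1$ for each $i$; hence \eqref{inertia_generator_cycle_type} collapses to $(e_1\text{-cycle})\cdots(e_m\text{-cycle})$, which is the first assertion.

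For the second assertion I would verify the extra hypothesis $D_{\P|p}=I_{\P|p}$ of Proposition \ref{unramified_valuation_count}. Since $k$ is perfect we have $|D_{\P|p}|=e_{\P|p}\cdot f_{\P|p}$ and $|I_{\P|p}|=e_{\P|p}$ (the facts recorded via Neukirch in the proof of Lemma \ref{double_coset_bijection_lem}), and $f_{\P|p}=1$ because the residue field of $\P$ is an algebraic extension of the algebraically closed field $\bar\Q$. Thus $|D_{\P|p}|=|I_{\P|p}|$, and since $I_{\P|p}\le D_{\P|p}$ these groups coincide. Proposition \ref{unramified_valuation_count} then applies verbatim, with $\gamma$ a generator of $I_{\P|p}$, and gives the stated count \eqref{unramified_valuation_formula} for the number of places of $L$ lying over $p$ that are unramified over $K$.

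There is no genuine obstacle here: the corollary is purely a matter of specializing the two preceding propositions and recording that over $\bar\Q(t)$ all residue degrees equal $1$ and every decomposition group coincides with its inertia subgroup. The only point requiring a moment's care is the identification of the residue fields of $\bar\Q(t)$ at its places, which in turn forces $f_i=1$ and $D_{\P|p}=I_{\P|p}$; once this is noted, the two propositions combine with no further work.
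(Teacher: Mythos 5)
Your proposal is correct and matches the paper's approach: the paper gives no separate proof, simply presenting the corollary as the combination of Propositions \ref{inertia_cycle_type_prop} and \ref{unramified_valuation_count} with the observation that all residue degrees over $\bar\Q(t)$ equal $1$. Your verification that the residue field is $\bar\Q$ (hence perfect of characteristic $0$, so both hypotheses hold and $D_{\P|p}=I_{\P|p}$) is exactly the routine check the paper leaves implicit.
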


\section{Ramification data for dynatomic polynomials}\label{dynatomic_section}
Let us fix a positive integer $n$. We will henceforth regard the polynomial $\Phi_n(x)$ as an element of the ring $\bar\Q(t)[x]$. As such, it is known by work of Bousch~\cite[Chap. 3]{bousch} that $\Phi_n$ is irreducible. In this section we will apply Corollary \ref{function_field_inertia_cor} to study inertia groups in the Galois group of $\Phi_n$.

Let $K=\bar\Q(t)$, let $N/K$ be a splitting field of $\Phi_n$, and let $G=\Gal(N/K)$. Let $F$ be a subextension of $N/K$ obtained by adjoining one root of $\Phi_n$ to $K$. In \cite[\S3]{morton_dynatomic_curves} Morton studies the ramification of places in the extension $F/K$ by using certain polynomials $\Delta_{n,d}\in\Z[t]$, where $d$ is a divisor of $n$. These polynomials had previously been defined in \cite[\S1]{morton-vivaldi}; we refer the reader to that article for the definition. We now recall a few results from \cite{morton_dynatomic_curves} and \cite{morton-vivaldi} which will be needed here.

For every positive integer $s$, let
\[\nu(s)=\frac{1}{2}\sum_{d|s}\mu(s/d)2^d.\]

\begin{lem}[Morton-Vivaldi]\label{delta_roots_disjoint_lem}
For every divisor $d$ of $n$, let $R_{n,d}\subset\bar\Q$ denote the set of roots of $\Delta_{n,d}$. Then the following hold:
\begin{enumerate}
\item[(a)] $\#R_{n,d}=\deg\Delta_{n,d}$ for every $d$.
\item[(b)] If $d$ and $e$ are distinct divisors of $n$, then $R_{n,d}\cap R_{n,e}=\emptyset$.
\item[(c)] Letting $\varphi$ denote Euler's phi function, the degree of $\Delta_{n,d}$ is given by
\[\deg\Delta_{n,d}=
\begin{cases}
\nu(d)\varphi(n/d) &\text{if } d<n\\
\nu(n)-\sum_{\substack{k|n \\ k<n}}\nu(k)\varphi(n/k) & \text{if } d=n.
\end{cases}\]
\end{enumerate}
\end{lem}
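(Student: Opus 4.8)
The plan is to start from the definition of $\Delta_{n,d}$ given in \cite{morton-vivaldi}, which I record in the form most convenient here. For a divisor $d$ of $n$, let $\delta_d(t,\lambda)=\prod_{C}(\lambda-\lambda_C)\in\Z[t,\lambda]$ be the product over the formal cycles $C$ of exact period $d$ of $\phi(x)=x^2+t$, where $\lambda_C=2^d\prod_{x\in C}x$ is the multiplier of $C$; this is monic in $\lambda$ of degree $2\nu(d)/d$, with integer coefficients because the $\lambda_C$ are integral over $\Z[t]$ and the product is Galois-stable. For $d<n$ one has, up to a nonzero constant, $\Delta_{n,d}(t)=\operatorname{Res}_\lambda(\delta_d(t,\lambda),\Psi_{n/d}(\lambda))=\prod_{\zeta}\delta_d(t,\zeta)$, where $\Psi_q$ denotes the $q$-th cyclotomic polynomial and $\zeta$ runs over the primitive $(n/d)$-th roots of unity; thus the roots of $\Delta_{n,d}$ are precisely the parameters $c$ for which $\phi_c$ has a cycle of exact period $d$ whose multiplier is a primitive $(n/d)$-th root of unity (equivalently, for which $\Phi_d(c,x)$ and $\Phi_n(c,x)$ have a common root). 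Finally $\Delta_{n,n}$ is pinned down by the identity $\delta_n(t,1)=\pm\prod_{d\mid n}\Delta_{n,d}(t)$, so its roots are the $c$ at which $\phi_c$ has a cycle of exact period $n$ with multiplier $1$, i.e.\ at which $\Phi_n(c,x)$ has a repeated root of exact period $n$. (A different normalization, e.g.\ taking $\Delta_{n,d}$ to be a factor of $\operatorname{disc}_x\Phi_n$, changes only the bookkeeping below; the root sets are the same.)

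I would prove part (c) first, as it is purely a degree count. By the resultant formula, $\deg\Delta_{n,d}=\varphi(n/d)\cdot\deg_t\delta_d(t,\zeta)$ for $d<n$, so it suffices to show $\deg_t\delta_d(t,\zeta)=\nu(d)$; this follows from a short analysis at the place $t=\infty$, where the Newton polygon of $\Phi_d$ is a single segment of slope $1/2$, so each root of $\Phi_d$ grows like $t^{1/2}$, hence $\lambda_C\sim(\text{nonzero})\cdot t^{d/2}$ and $\delta_d(t,\zeta)=\prod_C(\zeta-\lambda_C)\sim(\text{nonzero})\cdot t^{(2\nu(d)/d)(d/2)}=(\text{nonzero})\cdot t^{\nu(d)}$. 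The same estimate with $\zeta=1$ and $d=n$ gives $\deg_t\delta_n(t,1)=\nu(n)$; combining this with the identity $\delta_n(t,1)=\pm\prod_{d\mid n}\Delta_{n,d}(t)$ and the formula just obtained for proper divisors yields $\deg\Delta_{n,n}=\nu(n)-\sum_{k\mid n,\,k<n}\nu(k)\varphi(n/k)$.

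For part (b), suppose $c\in R_{n,d}\cap R_{n,e}$ with $d\ne e$; by Proposition \ref{base_change_prop} I may assume $c\in\C$. When $d<n$ the cycle realizing $c\in R_{n,d}$ is genuinely of exact period $d$ and non-repelling: a formal period-$d$ cycle that degenerates at $c$ collapses onto a cycle of some exact period $d'\mid d$, and then --- by Morton's period-degeneration criterion, since the collapsed point is a formal period-$d$ point --- its exact multiplier is a primitive $(d/d')$-th root of unity, so the $\Phi_d$-multiplier of the cycle equals $1$; but the cycle responsible for $c\in R_{n,d}$ has $\Phi_d$-multiplier equal to a primitive $(n/d)$-th root of unity, which is $\ne 1$. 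Likewise the cycle responsible for $c\in R_{n,n}$ has exact period $n$ by construction. Hence $\phi_c$ would possess two distinct non-repelling (parabolic) cycles, of periods $d$ and $e$, contradicting the classical fact that a quadratic polynomial over $\C$ has at most one non-repelling cycle (each non-repelling cycle attracts the forward orbit of a critical point, and $\phi_c$ has a unique critical point). Therefore $R_{n,d}\cap R_{n,e}=\emptyset$.

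For part (a), the factorization $\Delta_{n,d}=\pm\prod_{\zeta}\delta_d(t,\zeta)$ together with (b) --- which by the same argument also shows $\delta_d(t,\zeta)$ and $\delta_d(t,\zeta')$ are coprime for $\zeta\ne\zeta'$ --- reduces separability of $\Delta_{n,d}$ ($d<n$) to separability of each $\delta_d(t,\zeta)$, and likewise separability of $\Delta_{n,n}$ to that of $\delta_n(t,1)$. At a root $c$ of $\delta_d(\cdot,\zeta)$ there is, by the one-non-repelling-cycle bound, a unique cycle $C_0$ with $\lambda_{C_0}(c)=\zeta$, and its points are simple roots of $\Phi_d(c,x)$ (as $\zeta\ne 1$), so $\lambda_{C_0}$ extends to an analytic function near $c$ and $\partial_t\delta_d(c,\zeta)=-\lambda_{C_0}'(c)\prod_{C\ne C_0}(\zeta-\lambda_C(c))$ with the product nonzero; hence separability of $\delta_d(t,\zeta)$ is exactly the transversality statement $\lambda_{C_0}'(c)\ne 0$, and for $\delta_n(t,1)$ it is the analogous non-degeneracy of the saddle-node bifurcations --- equivalently, that the relevant repeated root of $\Phi_n(c,x)$ occurs with multiplicity exactly $2$. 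Establishing this transversality is the main obstacle, and I would obtain it either from the conformality of the multiplier map on the hyperbolic components of the Mandelbrot set or, following Morton--Vivaldi, from an explicit formula for $\operatorname{disc}_x\Phi_n$ and a prime modulo which it remains squarefree. In summary, (b) and (c) are comparatively soft once the roots of $\Delta_{n,d}$ are correctly described, while (a) carries the analytic weight.
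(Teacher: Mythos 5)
The paper does not prove this lemma at all: its ``proof'' is a two-line citation to Morton--Vivaldi (Proposition 3.2 for (a) and (b), Corollary 3.3 for (c)), so the only meaningful comparison is with the cited source. Your reconstruction is faithful to that source's strategy: part (c) via the growth $x\sim t^{1/2}$ of periodic points at $t=\infty$ (consistent with Lemma \ref{morton_ramification_lem}(b), which says every place over $p_\infty$ has ramification index $2$), so that each cycle multiplier has $t$-degree $d/2$ and each $\delta_d(t,\zeta)$ has degree $\nu(d)$; part (b) via Fatou's bound that a unicritical quadratic has at most one non-repelling cycle; and part (a) reduced, by the product rule and the uniqueness of the non-repelling cycle, to transversality of the multiplier map at parameters where the multiplier hits a root of unity. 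The degree bookkeeping in (c) is internally consistent (the claimed identity $\delta_n(t,1)=\pm\prod_{d\mid n}\Delta_{n,d}$ is exactly what makes the two cases of the degree formula add up to $\nu(n)$), and the reduction in (a) is correctly carried out as far as it goes.

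The genuine gap is the one you flag yourself: part (a) is not proven. The statement $\lambda_{C_0}'(c)\neq 0$ at parabolic parameters (equivalently, that the repeated root of $\Phi_n(c,x)$ has multiplicity exactly $2$) is a real theorem --- it is the content of Douady--Hubbard's result that the multiplier map is a conformal isomorphism on hyperbolic components, extended past the boundary --- and naming two places where it could be found is not the same as establishing it; as you say, this is where the analytic weight of the lemma sits. A second, smaller soft spot: your argument for (b) in the case $e=n$ asserts that the cycle responsible for $c\in R_{n,n}$ ``has exact period $n$ by construction,'' but since $\Delta_{n,n}$ is defined as the quotient $\delta_n(t,1)/\prod_{d<n}\Delta_{n,d}$, knowing that its roots correspond to \emph{genuine} period-$n$ parabolic cycles (rather than to excess multiplicity of degenerating formal $n$-cycles) requires the multiplicity accounting in that factorization, which itself leans on the separability and disjointness being proved. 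Neither issue indicates a wrong approach --- both are resolved in Morton--Vivaldi --- but as written the proposal is a correct outline rather than a self-contained proof, with (a) carrying the unfinished load. Incidentally, the appeal to Proposition \ref{base_change_prop} to place $c$ in $\C$ is unnecessary; one simply embeds $\bar\Q$ into $\C$.
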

\begin{proof}
All statements are proved in \cite{morton-vivaldi}. Indeed, (a) and (b) follow from Proposition 3.2, and (c) follows from Corollary 3.3.
\end{proof}

Recall that for every place $p$ of $K$, the \emph{conorm} of $p$ with respect to the extension $F/K$ is the divisor, which we write multiplicatively, defined by
\[i_{F/K}(p)=\p_1^{e_1}\cdots\p_s^{e_s},\] where $\p_1,\ldots,\p_s$ are the distinct places of $F$ lying over $p$ and $e_i$ is the ramification index of $\p_i$ over $p$. A discussion of the basic properties of the conorm map may be found in \cite[\S3.1]{stichtenoth} or \cite[Chap. 7]{rosen}.

Let $D=\deg\Phi_n$; note that $D=2\nu(n)$. As explained in \S\ref{galois_action_section}, the set of roots of $\Phi_n$ can be partitioned into sets of cardinality $n$, and therefore $n$ divides $D$. Let $r=D/n$.

\begin{lem}[Morton]\label{morton_ramification_lem}
Let $p_{\infty}$ be the infinite place of $K$, i.e., the place corresponding to the valuation $v_{\infty}$ of $K$ given by $v_{\infty}(f/g)=\deg g-\deg f$. For $b\in\bar\Q$, let $p_b$ denote the place of $K$ corresponding to the polynomial $t-b$.
\begin{enumerate}
\item [(a)] The places of $K$ that ramify in $F$ are $p_{\infty}$ and $p_b$ for $b\in\cup_{d|n}R_{n,d}$.
\item[(b)] The conorm of $p_{\infty}$ has the form
\[i_{F/K}(p_{\infty})=\p_1^2\cdots\p_{\nu(n)}^2.\]
\item[(c)] For every $b\in R_{n,n}$, the conorm of $p_b$ has the form
\[i_{F/K}(p_b)=\p_1^2\cdots\p_n^2\cdot\q_1\cdots\q_{n(r-2)}.\]
\item[(d)] For every $b\in R_{n,d}$, where $d<n$, the conorm of $p_b$ has the form
\[i_{F/K}(p_b)=\p_1^{n/d}\cdots\p_d^{n/d}\cdot\q_1\cdots\q_{n(r-1)}.\]
\end{enumerate}
\end{lem}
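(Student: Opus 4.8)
The plan is to obtain each part of the lemma from Morton's analysis of the dynatomic discriminant in \cite{morton_dynatomic_curves} (which rests on \cite{morton-vivaldi}), the task being essentially one of translation: each statement about the conorm $i_{F/K}(p)$ is a restatement of the local behavior of the roots of $\Phi_n(t,x)$ as $t$ approaches the place $p$. The organizing principle is that $\phi$ permutes the roots of $\Phi_n$, commutes with $\Gal(N/K)$, and partitions the root set into orbits of size $n$ (see \S\ref{galois_action_section}); hence the multiplicity of a root of the specialization $\Phi_n(b,x)$, and so the ramification index of the associated place of $F$, is constant along each $\phi$-orbit, and likewise at $p_{\infty}$. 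This forces the conorms in (b)--(d) to occur in the asserted block shapes and reduces their proof to identifying which orbits degenerate over a given branch value and computing the ramification index on one such orbit. Throughout, the degree identities $2\nu(n)=D=rn$, $2\cdot n+n(r-2)=rn$, and $(n/d)\cdot d+n(r-1)=rn$ serve as built-in consistency checks, and the resulting value of $2g_F-2=-2D+\sum_{p,\,\p}(e_{\p}-1)$ must agree with the known genus of the dynatomic curve.

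For (b) I would compute the Newton polygon of $\Phi_n(t,x)$ --- equivalently of $\phi^n(x)-x$ --- at $t=\infty$. A short calculation with $\phi(x)=x^2+t$ shows that any periodic point satisfies $x=\pm(-t)^{1/2}+O(1)$ as $t\to\infty$, so every root of $\Phi_n$ has a Puiseux expansion in the uniformizer $(-t)^{-1/2}$; thus the polygon has a single relevant edge forcing ramification index $2$, and the two sign choices pair up the roots so that exactly two lie over each place of $F$ above $p_{\infty}$. With $\deg\Phi_n=D=2\nu(n)$ this gives $\nu(n)$ places, each of ramification index $2$, as in (b); in particular $p_{\infty}$ ramifies. (Alternatively one simply quotes the corresponding statement of \cite{morton_dynatomic_curves}.)

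For (c) and (d) I would invoke Morton's description of the degenerations at finite branch values. If $b\in R_{n,d}$ with $d<n$, then exactly one $\phi$-orbit of period-$n$ points (that is, $n$ of the $D=rn$ roots of $\Phi_n$) collapses onto a single $\phi$-orbit of period-$d$ points as $t\to b$, with $n/d$ of the collapsing points landing on each of the $d$ period-$d$ points and the curve $\Phi_n(t,x)=0$ having there a single branch along which $t-b$ is an $(n/d)$-th power of a uniformizer (a ``satellite'' bifurcation with rotation denominator $n/d$); this yields $d$ places of $F$ over $p_b$ with ramification index $n/d$, the remaining $n(r-1)$ roots staying distinct and unramified, which is (d). If $b\in R_{n,n}$, the degeneration is a saddle-node at which two $\phi$-orbits of period-$n$ points coincide, so $2n$ roots merge in $n$ pairs, each pair forming a single branch with $t-b$ a square of a uniformizer, and the remaining $n(r-2)$ roots are unramified; this is (c).

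Part (a) then follows formally: the only places of $K$ that can ramify in $F$ are $p_{\infty}$ and the $p_b$ with $b$ a zero of $d_n(t)=\operatorname{disc}_x\Phi_n(t,x)$; by the Morton--Vivaldi factorization $d_n$ equals a nonzero constant times $\prod_{d\mid n}\Delta_{n,d}^{m_d}$ with positive exponents, so by Lemma \ref{delta_roots_disjoint_lem}(a),(b) its zero set is exactly $\bigcup_{d\mid n}R_{n,d}$, with no cancellation between distinct $d$, while (b)--(d) exhibit genuine ramification at each of these places. The part of this program that is real work rather than translation --- and the main obstacle to a self-contained proof --- is the local transversality input underlying (c) and (d): it is not enough to know that $\Phi_n(b,x)$ has roots of multiplicity $n/d$ (respectively $2$); one needs that above each such point $F$ has a \emph{single} place with that ramification index, equivalently that $d_n$ vanishes at $b$ to exactly the expected order. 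This rests on the computations of the multiplier map and of parabolic bifurcations for $x^2+t$ carried out in \cite{morton-vivaldi, morton_dynatomic_curves}, which I would quote rather than reprove, spending the written proof on the combinatorial translation above and on the degree and Riemann--Hurwitz checks.
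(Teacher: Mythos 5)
Your proposal is correct and, at bottom, takes the same route as the paper: the paper's proof of this lemma is nothing more than a citation (parts (a), (c), (d) to the proof of Proposition~9 and part (b) to Proposition~10 of \cite{morton_dynatomic_curves}), and the genuinely hard local inputs in your sketch are likewise quoted from \cite{morton_dynatomic_curves, morton-vivaldi} rather than reproved. The additional material you supply --- the Puiseux expansion $x\sim\pm(-t)^{1/2}$ at $p_\infty$, the satellite/saddle-node bifurcation picture at the roots of $\Delta_{n,d}$, the discriminant factorization for (a), and the degree checks $2\nu(n)=2n+n(r-2)=(n/d)d+n(r-1)=rn$ --- is a faithful and correctly organized account of what the cited propositions actually establish, so there is no gap.
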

\begin{proof}
All statements are proved in \cite{morton_dynatomic_curves}; (a), (c) and (d) follow from the proof of Proposition 9, and (b) follows from Proposition 10.
\end{proof}

Let $\PP=\{p_{\infty}\}\cup\{p_b\;\vert\;b\in \cup_{d|n}R_{n,d}\}$ be the set of places of $K$ that ramify in $F$. For any intermediate field $L$ in the extension $N/K$ and any place $p$ of $K$, let $\PP_L(p)$ denote the set of places of $L$ lying over $p$.

We introduce some terminology to be used throughout the article. Suppose that $G$ is a group acting on a finite set $X$, and let $g\in G$. We say that $g$ has \emph{cycle type} $(a,b)$, where $a$ and $b$ are positive integers, if the disjoint cycle decomposition of $g$, disregarding 1-cycles, is a product of $b$ $a$-cycles.

Applying Corollary \ref{function_field_inertia_cor} to the polynomial $\Phi_n$ and using Lemma \ref{morton_ramification_lem}, we immediately obtain the following description of inertia groups in $G$.

\begin{prop}\label{dynatomic_inertia_prop} Let $p\in\PP$ and $\P\in\PP_N(p)$. Then the inertia group $I_{\P|p}$ has a generator with cycle type $(a,b)$ satisfying
\[(a,b)=
\begin{cases}
(2,D/2) & \text{ if } p=p_{\infty};\\
(2,n) & \text{ if } p=p_b \text{ with } b\in R_{n,n};\\
(n/d,d) & \text{ if } p=p_b \text{ with } b\in R_{n,d}, d<n.
\end{cases}\]
\end{prop}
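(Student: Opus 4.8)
The plan is to apply Corollary~\ref{function_field_inertia_cor} directly, taking $P(x)=\Phi_n$ and $F$ the subextension obtained by adjoining one root of $\Phi_n$ to $K=\bar\Q(t)$; recall that $\Phi_n$ is irreducible over $K$ by the work of Bousch, so the hypotheses of that corollary are satisfied. Since the residue field of $K$ at every place is $\bar\Q$, which has characteristic $0$, the condition that the ramification index not be divisible by the residue characteristic holds automatically for every place $\P$ of $N$. Consequently, for any $p\in\PP$ and any $\P\in\PP_N(p)$, the inertia group $I_{\P|p}$ is generated by a permutation of the roots of $\Phi_n$ whose disjoint cycle decomposition consists of exactly one $e_i$-cycle for each place $\p_i$ of $F$ lying over $p$, where $e_i$ is the ramification index of $\p_i$ over $p$. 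The whole statement then reduces to reading off these ramification indices from the conorm factorizations recorded in Lemma~\ref{morton_ramification_lem}; since all roots of $\Phi_n$ generate conjugate subextensions, the answer will not depend on the choice of $F$.

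I would then work through the three cases. For $p=p_\infty$, Lemma~\ref{morton_ramification_lem}(b) says that $i_{F/K}(p_\infty)$ is a product of $\nu(n)$ squared primes, so there are $\nu(n)$ places of $F$ above $p_\infty$, each with ramification index $2$; as $D=2\nu(n)$, the inertia generator is a product of $D/2$ disjoint transpositions with no fixed points, giving cycle type $(2,D/2)$. For $p=p_b$ with $b\in R_{n,n}$, Lemma~\ref{morton_ramification_lem}(c) gives $n$ places of ramification index $2$ together with $n(r-2)$ unramified places, so the generator consists of $n$ transpositions and $n(r-2)$ fixed points, i.e.\ cycle type $(2,n)$ after disregarding $1$-cycles. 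For $p=p_b$ with $b\in R_{n,d}$ and $d<n$, Lemma~\ref{morton_ramification_lem}(d) gives $d$ places of ramification index $n/d$ and $n(r-1)$ unramified places, yielding cycle type $(n/d,d)$.

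There is no genuine obstacle here: the argument is essentially mechanical once Corollary~\ref{function_field_inertia_cor} and Lemma~\ref{morton_ramification_lem} are in place. The one point that warrants a quick check is that in each case the exponents appearing in the conorm factorization sum to $D=\deg\Phi_n$ — namely $2\nu(n)=D$, then $2n+n(r-2)=nr=D$, and finally $d\cdot(n/d)+n(r-1)=nr=D$, using $r=D/n$ — which confirms that every root not moved by the inertia generator is genuinely fixed, so that the listed cycle type accounts for all $D$ roots.
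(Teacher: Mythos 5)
Your proposal is correct and follows exactly the paper's own (one-line) argument: apply Corollary~\ref{function_field_inertia_cor} to $\Phi_n$ and read off the ramification indices from the conorm factorizations in Lemma~\ref{morton_ramification_lem}. The extra checks you include (the residue characteristic condition being vacuous over $\bar\Q$, and the exponents in each conorm summing to $D$) are sound and just make explicit what the paper leaves implicit.
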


In addition to the data on ramification of places in $F/K$ provided by Lemma \ref{morton_ramification_lem}, in later sections we will need some ramification data for a subfield $F_0\subset F$ defined as follows. Let $\theta$ be a root of $\Phi_n$ such that $F=K(\theta)$. The field $F$ has an automorphism\footnote{Note that $\phi(\theta)$ is a root of $\Phi_n$, so there is an isomorphism $F\to K(\phi(\theta))$ mapping $\theta$ to $\phi(\theta)$. Moreover, the fact that $\phi^n(\theta)=\theta$ implies that $F=K(\phi(\theta))$, so this map is in fact an automorphism of $F$.} given by $\theta\mapsto\phi(\theta)=\theta^2+t$; we define $F_0$ to be the fixed field of this automorphism.

\begin{prop}[Morton]\label{F0_ramification_prop}Let $p\in\PP$ and let $S(p)=\sum_{\q\in\PP_{F_0}(p)}(e_{\q|p}-1)$.
\begin{enumerate}
\item[(a)] If $p=p_{\infty}$, then $S(p)=r-e_n$, where
\[e_n=\frac{1}{2n}\sum_{d|(n,2)}\varphi(d)^2\cdot\sum_{k\in U_{n,d}}\mu(n/k)2^{k/d}.\]
Here $U_{n,d}=\{k\in\Z_{>0}:k|n,\; d|k,\text{ and } (n/k,d)=1\}$.
\item[(b)] If $p=p_b$, where $b\in R_{n,n}$, then $S(p)=1$.
\item[(c)] If $p=p_b$, where $b\in R_{n,d}$ for some $d<n$, then $S(p)=0$.
\end{enumerate}
\end{prop}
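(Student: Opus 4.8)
The plan is to convert each $S(p)$ into a cycle‑counting problem inside $G=\Gal(N/K)$. Since the constant field $\bar\Q$ is algebraically closed, every residue degree in $F_0/K$ equals $1$, so $\sum_{\q\in\PP_{F_0}(p)}e_{\q|p}=[F_0:K]=r$ and hence
\[S(p)=r-\#\PP_{F_0}(p);\]
thus it suffices to count the places of $F_0$ above $p$. Now $F_0$ is the fixed field of the order‑$n$ automorphism $\theta\mapsto\phi(\theta)$ of $F$, and the $\phi$‑orbits on the $D$ roots of $\Phi_n$ are precisely the $r$ blocks of the imprimitivity system with respect to which $G\cong(\Z/n\Z)\wr S_r$ (cf. \S\ref{galois_action_section}); consequently $F_0$ is the fixed field of the block–stabilizer $H_0\le G$. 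Fixing a place $\P$ of $N$ over $p$ and a generator $\gamma$ of $I_{\P|p}$ (cyclic, as we are in characteristic zero), the double‑coset bijection of Lemma \ref{double_coset_bijection_lem}, together with the equality $D_{\P|p}=I_{\P|p}$, shows that $\#\PP_{F_0}(p)$ equals the number of orbits of $\langle\gamma\rangle$ on the $r$ blocks, i.e. the number of cycles of the image $\pi\in S_r$ of $\gamma$ under the projection $(\Z/n\Z)\wr S_r\to S_r$. So the whole problem becomes: determine the cycle structure of $\pi$ for each of the three kinds of place.

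Writing $\gamma=(\mathbf a;\pi)$ with $\mathbf a\in(\Z/n\Z)^r$, a routine computation records how the orbits of $\gamma$ on the roots reflect $\pi$ and $\mathbf a$: a $\pi$‑fixed block $j$ with $a_j=0$ gives $n$ fixed roots; a $\pi$‑fixed block $j$ with $a_j\ne 0$ gives $\gcd(n,a_j)$ cycles of length $n/\gcd(n,a_j)\ge 2$; and an $\ell$‑cycle of $\pi$ with coordinate‑sum $s$ gives $\gcd(n,s)$ cycles of length $\ell n/\gcd(n,s)\ge 2$. Comparing this with the cycle types supplied by Proposition \ref{dynatomic_inertia_prop} settles parts (b) and (c). For (c), with $p=p_b$ and $b\in R_{n,d}$, $d<n$: the generator $\gamma$ has $D-n=n(r-1)$ fixed roots, which can only come from $\pi$‑fixed blocks with vanishing coordinate, forcing exactly $r-1$ of them; the single remaining block is then also $\pi$‑fixed, so $\pi=\mathrm{id}$ and $S(p_b)=r-r=0$. (Geometrically, a single $n$‑cycle collapses $\tfrac{n}{d}$‑to‑one onto a $d$‑cycle and the branch monodromy is internal to that cycle.) For (b), with $p=p_b$ and $b\in R_{n,n}$: now $\gamma$ has $n(r-2)$ fixed roots, so $\pi$ fixes $r-2$ blocks with vanishing coordinate, and the remaining two blocks are either interchanged by $\pi$ (with coordinate‑sum $0$) or both fixed by $\pi$ with coordinate $n/2$ — the latter possible only for even $n$, and forcing two roots of $\Phi_n(b,x)$ in a common $\phi$‑orbit to collide. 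That collision would give that orbit period $n/2<n$, contradicting the fact that for $b\in R_{n,n}$ every root of $\Phi_n(b,x)$ still has period exactly $n$ (since $R_{n,n}$ is disjoint from $R_{n,n/2}$, cf. Lemma \ref{delta_roots_disjoint_lem}, and period drops only at the $R_{n,m}$ with $m<n$). Hence $\pi$ is a single transposition, $\#\PP_{F_0}(p_b)=r-1$, and $S(p_b)=1$.

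Part (a) is the substantial one. By Proposition \ref{dynatomic_inertia_prop} the generator $\gamma$ at $p_\infty$ has order $2$ with no fixed roots, so $\pi$ is an involution on the $r$ blocks consisting of some number $q$ of fixed blocks (each with coordinate $n/2$) and $(r-q)/2$ transpositions; hence $\#\PP_{F_0}(p_\infty)=q+\tfrac{r-q}{2}=\tfrac{r+q}{2}$ and $S(p_\infty)=r-\tfrac{r+q}{2}$. For odd $n$ no fixed block can occur, so $q=0$, and a direct evaluation of the formula gives $e_n=\nu(n)/n=r/2$; this proves (a) for odd $n$. For even $n$ one must compute $q$, and this is the heart of the matter: conjugating $\phi_t$ by $x\mapsto\sqrt{-t}\,x$ turns the family into $y\mapsto\sqrt{-t}\,(y^2-1)$, whose period‑$n$ orbits degenerate as $t\to\infty$ to the primitive binary necklaces of length $n$ (recording the limiting sign of each coordinate), with the branch involution $\sqrt{-t}\mapsto-\sqrt{-t}$ acting by complementing all signs. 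Thus $q$ equals the number of self‑complementary primitive binary necklaces of length $n$, and a M\"obius‑inversion count of these recovers the $d=2$ term of the stated formula for $e_n$, yielding $S(p_\infty)=r-e_n$. This last analysis is essentially Morton's \cite{morton_dynatomic_curves}; it — together with the verification that the necklace count coincides with the closed form for $e_n$ — is the step I expect to be the main obstacle, since it is the only part not forced by the cycle‑type data of Proposition \ref{dynatomic_inertia_prop} and the conorm data of Lemma \ref{morton_ramification_lem}.
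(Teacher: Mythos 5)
Your proposal is correct in outline and takes a genuinely different route from the paper: the paper proves this proposition purely by citation (part (a) is Morton's Theorem 13, and (b), (c) are read off from the proof of his Proposition 9), whereas you rederive (b), (c), and the odd-$n$ case of (a) from the paper's own machinery. Your reduction $S(p)=r-\#\PP_{F_0}(p)=r-\#(I\backslash G/H_0)$, the identification of $\#(I\backslash G/H_0)$ with the number of cycles of the image $\pi\in S_r$ of the inertia generator (using Lemma \ref{F0_group_lem} and transitivity of $G$ on blocks), and the case analysis via Proposition \ref{conjugacy_centralizer_megaprop} are all sound; the exclusion of case 2(a) in part (b) via "a collision $\phi_b^{n/2}(\bar\alpha)=\bar\alpha$ would force a period drop, impossible since $R_{n,n}$ is disjoint from the $R_{n,d}$, $d<n$" is a legitimate argument (the roots are $\P$-integral because $\Phi_n$ is monic over $\Z[t]$, and inertia acts trivially on the residue field), and your evaluation $e_n=\nu(n)/n=r/2$ for odd $n$ matches $q=0$. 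What your approach buys is a self-contained, checkable derivation of everything except one input; what it costs is that the even-$n$ case of (a) still requires knowing $q$, the number of inertia-fixed blocks at $p_\infty$, and your necklace/degeneration sketch for that (Puiseux expansion of periodic points at $t=\infty$, monodromy $\sqrt{-t}\mapsto-\sqrt{-t}$ acting by sign complementation, and the M\"obius count of self-complementary aperiodic necklaces) is not carried out. You flag this honestly, and it is precisely the analytic content of Morton's Theorem 13 that the paper outsources wholesale; since the even case $n=6$ is actually used later (to get $S(p_\infty)=9-e_6=4$ in Lemma \ref{inertia_identification_lem_6}), a complete write-up would either have to supply that argument or, like the paper, cite Morton for it.
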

\begin{proof}
All statements are proved in \cite{morton_dynatomic_curves}; (a) follows Theorem 13, while (b) and (c) can be deduced from the proof of Proposition 9. Indeed, it is shown in that proposition that if $p=p_b$, where $b\in R_{n,n}$, then there is a unique ramified place of $F_0$ lying over $p$, and its ramification index is 2; this implies (b). Similarly, If $p=p_b$, where $b\in R_{n,d}$ for some $d<n$, then $p$ is unramified in $F_0$, which implies $(c)$.
\end{proof}

\section{The action of the Galois group of $\Phi_n$}\label{galois_action_section}

We continue using here the notation introduced in the previous section. The genus computations in \S\S\ref{computation_section}-\ref{bounds_section}, which form the core of this article, rely fundamentally on Propositions \ref{unramified_valuation_count} and \ref{dynatomic_inertia_prop}. In order to apply these propositions effectively, we require a precise understanding of the elements of $G$ whose cycle decompositions have the forms described in Proposition \ref{dynatomic_inertia_prop}. In addition, explicit formulas for the orders of the centralizers of these elements will be needed when applying Proposition \ref{unramified_valuation_count}. The purpose of this section is to carry out a detailed analysis of the action of $G$ on the roots of $\Phi_n$. In the process we address both of the above requirements, the key result being Proposition \ref{conjugacy_centralizer_megaprop}.

Recall the notion of an isomorphism of group actions: if $A$ and $B$ are groups acting on sets $X$ and $Y$, respectively, we write $A\equiv B$ if there exist a group isomorphism $\varphi:A\to B$ and a bijection $\epsilon:X\to Y$ such that $\epsilon(ax)=\varphi(a)\epsilon(x)$ for all $a\in A$ and $x\in X$. Though the notation $A\equiv B$ does not make reference to the sets $X$ and $Y$, this should cause no confusion here because the sets being acted on will be clear from context. 

Let $R$ be the set of roots of $\Phi_n$ in the splitting field $N$, and consider the natural action of $G$ on $R$. In this section we will discuss three group actions, which we refer to as \emph{realizations} of $G$, that are isomorphic to $G$ with its action on $R$. The first realization is the automorphism group of a graph acting on its set of vertices; this is helpful as a visual aid for understanding the action of $G$. The second realization is a particular subgroup of the symmetric group $S_D$ acting on the set $\{1,\ldots, D\}$; this is useful for carrying out explicit computations with elements of $G$. The third realization is a wreath product $(\Z/n\Z)\wr S_r$ acting on the set $(\Z/n\Z)\times\{1,\ldots, r\}$. Though somewhat more technical, we find that this realization is the most convenient for purposes of proving the main results of this section. The key fact needed to show that these realizations are isomorphic is a well-known theorem of Bousch, namely Theorem 3 in \cite[Chap. 3]{bousch}.

\subsection{The group $G$ as a graph automorphism group}\label{graph_group_section} 

It is a simple consequence of the definition of $\Phi_n$ that the map $\phi(x)=x^2+t$ permutes the elements of $R$ (see \cite[\S2.2]{krumm_lgp} for details). Regarding $\phi$ as an element of the symmetric group $\Sym(R)$, we may therefore partition the set $R$ into $\phi$-orbits. By \cite[Thm. 2.4(c)]{morton-patel}, the fact that $\Phi_n$ is irreducible implies that every orbit has size $n$; hence, the number of orbits is $(\#R)/n=D/n=r$.

Let $\calG$ be the natural embedding of $G$ in $\Sym(R)$, and note that $G\equiv\calG$. Let $\Gamma$ be the directed graph whose vertices are the elements of $R$ and which has an edge $x\to\phi(x)$ for every $x\in R$. An illustration of $\Gamma$ is shown in Figure \ref{graph_figure} below. By Bousch's theorem, $\calG$ is the centralizer of $\phi$ in $\Sym(R)$. (More explicitly, this is a consequence of the proof of Proposition \ref{base_change_prop}. In the notation of that proof, we have $\calG=G_{\F}$, where $\F=\bar\Q$.) It follows that $\calG=\Aut(\Gamma)$ and therefore $G\equiv\Aut(\Gamma)$.

\begin{figure}[b]
\centerline{\includegraphics[width=5in]{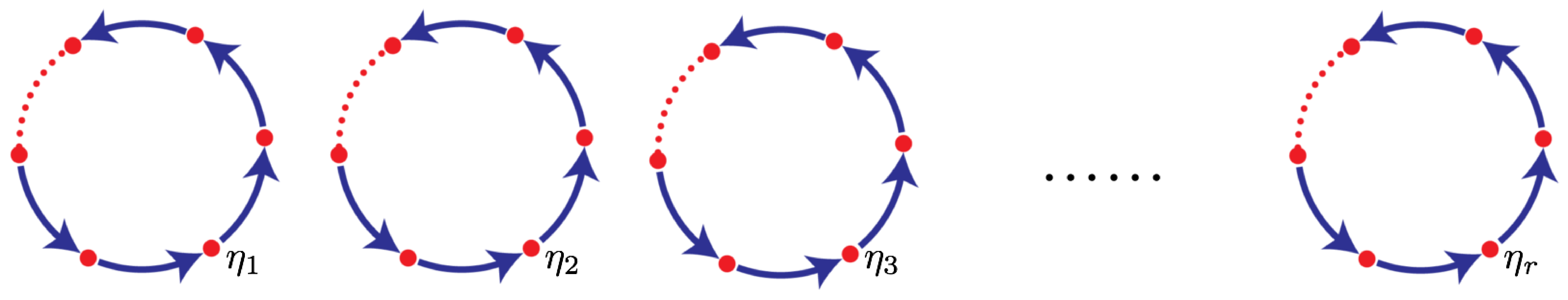}}
\caption{A directed graph whose automorphism group is isomorphic to the Galois group of $\Phi_n$. Every cycle in the graph has $n$ vertices, and there are $r$ cycles in total.}
\label{graph_figure}
\end{figure}

\subsection{The group $G$ as a permutation group}\label{permutation_group_section}

Let $S_D$ be the symmetric group on the set $\{1,\ldots, D\}$ and let $\sigma\in S_D$ be the permutation defined by 
\[\sigma=(1,\ldots, n)(n+1,\ldots, 2n)\cdots(D-n+1, \ldots, D).\]
There is a bijection $\ell:\{1,\ldots, D\}\to R$ under which the cycles in the decomposition of $\sigma$ correspond to the cycles in the graph $\Gamma$. Indeed, if we choose representatives $\eta_1,\ldots, \eta_r$ of the distinct cycles in $\Gamma$, then one such map $\ell$ is given by
\[\ell(ni-j)=\phi^{n-j}(\eta_i)\;\;\text{ for }1\le i\le r\text{ and }0\le j<n.\]
The map $\ell$ induces an isomorphism $\iota:S_D\to\Sym(R)$ under which $\sigma$ maps to $\phi$. Let $\ZZ$ be the centralizer of $\sigma$ in $S_D$. Since $\calG$ is the centralizer of $\phi$ in $\Sym(R)$, the image of $\ZZ$ under $\iota$ is equal to $\calG$. Moreover, the maps $\iota$ and $\ell$ induce an isomorphism of group actions between $\ZZ$ and $\calG$; hence $G\equiv\ZZ$.

\subsection{Background on wreath products}

Before discussing the realization of $G$ as a wreath product, we recall the basic construction of wreath products. For further information on this topic we refer the reader to \cite[\S2.6]{dixon-mortimer}, \cite[Chap. 7]{rotman}, or \cite[Chap. I]{kerber}.

Let $S_r$ denote the symmetric group on the set $\Omega=\{1,\ldots, r\}$. Let $A$ be a group, and consider the direct product $A^r$ consisting of functions $f:\Omega\to A$ with pointwise multiplication. There is an action of $S_r$ on $A^r$ given by $\pi\cdot f=f_{\pi}$, where $f_{\pi}$ is the function
\[f_{\pi}(i)=f(\pi^{-1}(i))\;\;\text{ for every }i\in\Omega.\]

This action induces a homomorphism $S_r\to\Aut(A^r)$, so we may form the semidirect product $\WW=A^r\rtimes S_r$. Elements of $\WW$ have the form $(f,\pi)$, where $f\in A^r$ and $\pi\in S_r$; the group operation in $\WW$ is given by
\[(f,\pi)(g,\sigma)=(fg_{\pi}, \pi\sigma).\]

The group $\WW$ is the wreath product of $A$ with $S_r$, denoted $A\wr S_r$. Letting $e$ and $1$, respectively, denote the identity elements of $A^r$ and $S_r$, there are embeddings $A^r\into\WW$ and $S_r\into\WW$ given by $f\mapsto (f,1)$ and $\pi\mapsto (e,\pi)$; we will henceforth identify $A^r$ and $S_r$ with their images under these maps. The group $B=A^r$, called the \emph{base group} of the wreath product, is a normal subgroup of $\WW$; indeed, $B$ is the kernel of the projection map $\WW\to S_r$ given by $(f,\pi)\mapsto\pi$. Furthermore, $S_r$ is a complement for $B$ in the sense that $B\cap S_r$ is trivial and $BS_r=\WW$.

Suppose now that $A$ acts on a set $\Delta$. Then there is an action of $\WW$ on the Cartesian product $\Delta\times\Omega$ given by
\begin{equation}\label{wreath_action}(f,\pi)\cdot(d,i)=\left(f(\pi(i))\cdot d,\pi(i)\right).
\end{equation}
Moreover, this action is faithful if $A$ acts faithfully on $\Delta$.

\subsection{The group $G$ as a wreath product}\label{wreath_presentation_section}

For the remainder of this section we assume that $A=\Z/n\Z$, so that $\WW=(\Z/n\Z)\wr S_r$. The action of $A$ on itself by addition induces a faithful action of $\WW$ on the set $X=A\times\Omega$ given by \eqref{wreath_action}. We will show that $\WW\equiv G$.

Let $\eta_1,\ldots, \eta_r$ be representatives of the distinct $\phi$-orbits of $R$. For every $w=(f,\pi)\in\WW$ we define $\zeta_w\in\calG=\Aut(\Gamma)$ by
\[\zeta_w(\phi^a(\eta_i))=\phi^{f(\pi(i))+a}(\eta_{\pi(i)})\;\;\text{for }a\in A\;\text{and }i\in\Omega.\]
Note that the notation $\phi^a$ for $a\in A$ is unambiguous since $\phi^n$ is the identity element of $\Sym(R)$.
Using the fact that $\calG$ is the centralizer of $\phi$ in $\Sym(R)$, it is a simple exercise to show that $\zeta_w$ is a well-defined element of $\calG$, and that the map $\zeta:\WW\to\calG$ given by $w\mapsto\zeta_w$ is a group isomorphism.

Let $\epsilon:X\to R$ be the map defined by $\epsilon(a, i)=\phi^a(\eta_i)$. From the definitions it follows that $\epsilon$ is a bijection and that for every $w\in\WW$ and $\alpha\in X$ we have $\epsilon(w\alpha)=\zeta(w)\epsilon(\alpha)$. Hence $\WW\equiv\calG$, and therefore $G\equiv\WW$. Using this realization of $G$ as a wreath product, we will now study the action of $G$.

\begin{rem}
It follows from the above discussion that
\[\Aut(\Gamma)\cong(\Z/n\Z)\wr S_r.\]
This is a special case of a well-known theorem of Frucht in graph theory. As shown in \cite{frucht} (see also  \cite[Thm. 14.5]{harary}), if $\Lambda$ is a finite connected graph and $\Gamma$ is a graph consisting of $r$ disjoint copies of $\Lambda$, then $\Aut(\Gamma)\cong\Aut(\Lambda)\wr S_r$.
\end{rem}

\subsection{Conjugacy in $\WW$}

Our main reason for using the realization of $G$ as a wreath product is that it provides convenient ways of deciding whether two elements of $G$ are conjugates of each other, and of calculating the order of the centralizer of any element of $G$. The key notion needed for these tasks is the \emph{type} of an element of $\WW$, defined below.

For every cycle $C=(i_1,i_2,\ldots, i_k)\in S_r$ and every element $f\in A^r$, we denote by $f(C)$ the element of $A$ given by $f(C)=f(i_1)+\cdots+f(i_k)$.

For every element $w=(f,\pi)\in\WW$, we define a map $T_w:X\to\Z_{\ge 0}$ as follows: for $a\in A$ and $k\in\Omega$, $T_w(a,k)$ is the number of $k$-cycles $C$ in the cycle decomposition of $\pi$ such that $f(C)=a$. The map $T_w$ will be called the \emph{type} of $w$. When $w$ is clear from context, we will denote $T_w(a,k)$ simply by $t_{ak}$ and we will use matrix notation $(t_{ak})$ to denote the map $T_w$.

\begin{prop}\label{wreath_conjugacy_centralizer}\mbox{}
\begin{enumerate}
\item Let $w_1,w_2\in\WW$. Then $w_1$ and $w_2$ are conjugates if and only if they have the same the type.
\item If $w$ has type $(t_{ak})$, then the order of the centralizer of $w$ in $\WW$ is given by the formula
\[\prod_{a\in A}\prod_{k\in\Omega}(t_{ak})!(kn)^{t_{ak}}.\]
\end{enumerate}
\end{prop}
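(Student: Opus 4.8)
The plan is to prove both parts simultaneously by analyzing conjugation in the wreath product $\WW = A\wr S_r$ directly, working with the standard normal form of elements. The essential tool is the following observation: if $w=(f,\pi)\in\WW$ and $\pi$ is written as a product of disjoint cycles, then conjugating $w$ by an element of the base group $B=A^r$ leaves $\pi$ unchanged but alters $f$, while conjugating by an element of $S_r$ permutes the cycles of $\pi$ and correspondingly relabels the coordinates of $f$. First I would record the precise conjugation formulas: for $g=(h,1)\in B$ we have $(h,1)^{-1}(f,\pi)(h,1) = (h^{-1}\cdot f\cdot h_\pi,\ \pi)$, and for $g=(e,\rho)\in S_r$ we have $(e,\rho)^{-1}(f,\pi)(e,\rho) = (f_{\rho^{-1}},\ \rho^{-1}\pi\rho)$. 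The key computational lemma, which I will call the \emph{cycle sum lemma}, is that $f(C)$ — the sum of the values of $f$ around a cycle $C$ of $\pi$ — is a \emph{conjugation invariant} in the following sense: conjugating by a base-group element can change $f$ arbitrarily subject to the cycle sums being preserved (this is because on a single $k$-cycle the base-group action is the map $h\mapsto h^{-1}\cdot f\cdot h_\pi$, and as $h$ ranges over $A^k$ the image ranges over exactly the $f'$ with the same cycle sum — a short telescoping argument); and conjugating by $S_r$ sends a $k$-cycle $C$ with $f(C)=a$ to a $k$-cycle with the same cycle sum $a$.

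For part (1): the "only if" direction is immediate from the cycle sum lemma, since both types of conjugation preserve, for each pair $(a,k)$, the number of $k$-cycles of $\pi$ with cycle sum $a$; hence conjugate elements have equal type. For the "if" direction, suppose $w_1=(f_1,\pi_1)$ and $w_2=(f_2,\pi_2)$ have the same type $(t_{ak})$. Since $\pi_1$ and $\pi_2$ then have the same cycle type in $S_r$, there is $\rho\in S_r$ with $\rho^{-1}\pi_1\rho = \pi_2$; conjugating $w_1$ by $(e,\rho)$ we reduce to the case $\pi_1=\pi_2=:\pi$. Now, because the types agree, we may choose $\rho$ so that it matches up $k$-cycles of $\pi_1$ having cycle sum $a$ with $k$-cycles of $\pi_2$ having cycle sum $a$, cycle by cycle. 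It then remains to adjust $f_1$ to $f_2$ within each cycle using a base-group conjugation, which is possible precisely because the cycle sums already agree — this is the surjectivity half of the cycle sum lemma, applied one cycle at a time. Composing the two conjugations exhibits $w_1$ and $w_2$ as conjugate.

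For part (2): I would compute $|C_\WW(w)|$ by the orbit-counting identity $|C_\WW(w)| = |\WW|/|\,\text{conj. class of }w\,|$, or more transparently by building the centralizer explicitly as follows. Group the cycles of $\pi$ according to their length $k$ and cycle sum $a$; there are $t_{ak}$ cycles in the block indexed by $(a,k)$. An element $(e,\rho)S_r$-part of a centralizing element may permute the cycles within each such block (contributing the factor $(t_{ak})!$) and must cyclically rotate along each individual cycle in a way compatible with the $f$-values; the rotations along a fixed $k$-cycle together with the base-group freedom preserving that cycle's sum contribute a factor of $kn$ per cycle (a cyclic group of order $k$ of rotations times the order-$n$ kernel of the cycle-sum map $A^k\to A$), giving $(kn)^{t_{ak}}$ over the block. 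Multiplying over all blocks $(a,k)$ yields $\prod_{a\in A}\prod_{k\in\Omega}(t_{ak})!\,(kn)^{t_{ak}}$. I will double-check this against the class-equation count: the size of the conjugacy class is $|\WW|$ divided by this product, and $|\WW| = n^r\, r!$; a clean way to confirm the bookkeeping is to verify $\sum t_{ak}\cdot k = r$ and $\sum t_{ak} = (\text{number of cycles of }\pi)$, then compare with the well-known formula for centralizer orders in $S_r$ enriched by the base-group contributions.

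\textbf{Main obstacle.} The crux is the cycle sum lemma — specifically the claim that, on a single $k$-cycle, base-group conjugation realizes \emph{all} reassignments of $f$-values with the prescribed sum, and nothing else. This requires writing out the recursion $f'(i_{j+1}) = f(i_{j+1}) + h(i_{j+1}) - h(i_j)$ around the cycle (indices mod $k$) and observing that the consistency condition for solving for $h$ given $f,f'$ is exactly $\sum_j f'(i_j) = \sum_j f(i_j)$, with the solution unique up to a global additive constant in $A$ — which is the source of the extra factor $n$ (not $k$) in the centralizer formula. Getting this constant-versus-rotation distinction exactly right, and making sure the within-block permutations and the per-cycle rotations do not overcount, is the delicate part; everything else is routine wreath-product bookkeeping.
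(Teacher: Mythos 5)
Your argument is essentially correct, but it is worth pointing out that the paper does not prove this proposition at all: it simply cites Kerber's monograph on representations of permutation groups (items 3.7 and 3.9 there), where conjugacy classes and centralizer orders in a general wreath product $A\wr S_r$ are classified via the multiset of cycle lengths together with the conjugacy classes of the cycle products in $A$. Your proposal supplies a self-contained proof exploiting that $A=\Z/n\Z$ is abelian, so that cycle products are genuine elements of $A$ rather than conjugacy classes; the conjugation formulas, the telescoping argument showing that base-group conjugation preserves each cycle sum and realizes every $f'$ with the prescribed cycle sums (with exactly $n$ choices of $h$ per cycle, unique up to an additive constant), and the resulting count $\prod_{a,k}t_{ak}!\,(kn)^{t_{ak}}$ obtained from the block permutations, the $k$ rotations per cycle, and the $n$ base-group solutions per cycle, are all sound, and your sanity checks (reduction to $S_r$ when $n=1$, the identity element, an $r$-cycle) confirm the bookkeeping. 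The one imprecision is the parenthetical identifying the factor $n$ with ``the order-$n$ kernel of the cycle-sum map $A^k\to A$'': that kernel has order $n^{k-1}$, not $n$; the correct source of the factor $n$ is the one you state in your final paragraph, namely that the solutions $h$ of the conjugation equation on a fixed cycle form a coset of the diagonal copy of $A$ in $A^k$. The trade-off between the two approaches is the usual one: the citation is shorter and covers non-abelian $A$, while your direct argument is elementary, makes the role of the cycle sums transparent, and is exactly what one needs to justify the type computations carried out later in Proposition \ref{conjugacy_centralizer_megaprop}.
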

\begin{proof}
Both statements can be deduced from more general results proved in \cite{kerber}. Specifically, (1) follows from item 3.7 on page 44, and (2) follows from item 3.9 on page 47.
\end{proof}

\subsection{The action of $\WW$}\label{wreath_action_section}

In this section we prove various properties of the action of $\WW$ on $X$. For elements $w=(f,\pi)\in\WW$ and $\alpha=(a,i)\in X$, we will denote by $w(\alpha)$ the action of $w$ on $\alpha$. Thus,
\begin{equation}\label{wreath_action_eq}
w(\alpha)=(f(\pi(i))+a,\pi(i)).
\end{equation}

Let $C_i=A\times\{i\}$ for $1\le i\le r$. Under the map $\epsilon$ defined in \S\ref{wreath_presentation_section}, $C_i$ corresponds to the $i^{\text{th}}$ cycle in the graph $\Gamma$, i.e., the cycle containing $\eta_i$.

The base group $A^r\le\WW$ is generated by the elements $\rho_1,\ldots,\rho_r$ defined by $\rho_i=(\delta_i,1)$, where $\delta_i(j)=0$ if $j\ne i$ and $\delta_i(i)=1$. Note that $\rho_i$ maps $C_i$ to itself and acts as the identity on $C_j$ if $j\ne i$. Viewed as an element of $\Aut(\Gamma)$ (via the map $\zeta$ defined in \S\ref{wreath_presentation_section}), $\rho_i$ acts as a $1/n$ rotation on the $i^{\text{th}}$ cycle. Let $\rho=\rho_1\cdots\rho_r=(\delta,1)$, where $\delta(i)=1$ for all $i\in\Omega$. Then $\zeta(\rho)=\phi$, so $\rho$ is in the center of $\WW$. A simple calculation shows that for all $s\in\Z$, $a\in A$, and $i\in\Omega$ we have
\begin{equation}\label{rotation_power_action}
\rho^s(a,i)=\rho_i^s(a,i)=(a+\bar s, i).
\end{equation}

For every $w\in\WW$ and every $i\in\Omega$, let $w(C_i)=\{w(\alpha)\;\vert\;\alpha\in C_i\}$.
\begin{lem}\label{w_cycle_action_lem} Let $w=(f,\pi)\in\WW$ and let $i\in\Omega$.
\begin{enumerate}
\item Letting $j=\pi(i)$, we have $w(C_i)=C_j$.
\item If $w(C_i)=C_i$, then there exists $0\le s<n$ such that $w(\alpha)=\rho_i^s(\alpha)$ for every $\alpha\in C_i$. Moreover, the $w$-orbit of every element of $C_i$ has cardinality $n/\gcd(n,s)$.
\end{enumerate}
\end{lem}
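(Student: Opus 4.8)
The plan is to unwind the explicit formula \eqref{wreath_action_eq} for the action of $\WW$ on $X$ and carry out a direct computation; no geometry is needed beyond the observation that $A=\Z/n\Z$ acts on itself simply transitively by translation.

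For part (1), I would take an arbitrary $\alpha=(a,i)\in C_i$ and compute $w(\alpha)=(f(\pi(i))+a,\pi(i))=(f(j)+a,j)$, which lies in $C_j=A\times\{j\}$ irrespective of $a$; hence $w(C_i)\subseteq C_j$. Since $a\mapsto f(j)+a$ is a bijection of $A$, the assignment $\alpha\mapsto w(\alpha)$ restricts to a bijection $C_i\to C_j$, and therefore $w(C_i)=C_j$.

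For part (2), I would first note that the sets $C_1,\dots,C_r$ are pairwise disjoint, so the hypothesis $w(C_i)=C_i$ together with part (1) forces $\pi(i)=i$. Let $s\in\{0,1,\dots,n-1\}$ be the representative of the residue class $f(i)\in A$. Then for every $\alpha=(a,i)\in C_i$ we get $w(\alpha)=(f(i)+a,i)=(a+\bar s,i)$, which by \eqref{rotation_power_action} is exactly $\rho_i^s(\alpha)$. For the orbit count, I would iterate: because $w$ maps $C_i$ into itself, restriction to $C_i$ is compatible with composition, so $w^k$ agrees with $\rho_i^{ks}$ on $C_i$, giving $w^k(a,i)=(a+\overline{ks},i)$ for all $k\ge 0$. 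This equals $(a,i)$ precisely when $ks\equiv 0\pmod n$, and the least positive such $k$ is $n/\gcd(n,s)$; hence every $w$-orbit in $C_i$ has cardinality $n/\gcd(n,s)$.

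The argument presents no genuine obstacle — it is essentially a bookkeeping check against the definitions in \S\ref{wreath_presentation_section} and \S\ref{wreath_action_section}. The only point deserving a word of care is the compatibility of iteration with restriction in part (2), i.e.\ that $w^k|_{C_i}=(\rho_i^s|_{C_i})^k=\rho_i^{ks}|_{C_i}$, which is legitimate precisely because $w(C_i)=C_i$ keeps the orbit inside $C_i$ so that the computation never leaves $C_i$.
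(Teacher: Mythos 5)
Your proof is correct and follows essentially the same direct computation as the paper; the only cosmetic difference is in part (2), where the paper determines $w$ on all of $C_i$ from the single value $w(0,i)$ by using that $\rho$ is central, whereas you first deduce $\pi(i)=i$ from the disjointness of the $C_j$ and then read off $w(a,i)=(f(i)+a,i)$ directly from \eqref{wreath_action_eq}. Both routes are sound, and your orbit count via the least $k$ with $ks\equiv 0\pmod n$ matches the paper's argument via the order of $\rho_i^s$.
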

\begin{proof}
For every element $(a,i)\in C_i$ we have $w(a,i)=(f(j)+a,j)\in C_j$, so $w(C_i)\subseteq C_j$. Since $\#C_i=\#C_j$ and $w$ acts as a bijection on $X$, this implies that $w(C_i)=C_j$, proving (1). Suppose now that $w(C_i)=C_i$, and let $0\le s<n$ be such that $w(0,i)=(\bar s,i)$. By \eqref{rotation_power_action} we have $w(0,i)=\rho^s(0,i)$. Given $\alpha\in C_i$, we may write $\alpha$ in the form $\alpha=(\bar k, i)=\rho^k(0,i)$ for some integer $k$. Using \eqref{rotation_power_action} and the fact that $w$ commutes with $\rho$ we obtain
\[w(\alpha)=w\rho^k(0,i)=\rho^kw(0,i)=\rho^k\rho^s(0,i)=\rho^s\rho^k(0,i)=\rho^s(\alpha)=\rho_i^s(\alpha).\]
This proves the first statement in (2). Since $w$ acts like $\rho_i^s$ on $C_i$, the orbit of $\alpha$ under $w$ is equal to its orbit under $\rho_i^s$. The cyclic group generated by $\rho_i^s$ has order $n/\gcd(n,s)$, and it follows from \eqref{rotation_power_action} that the stabilizer of $\alpha$ in this group is trivial; hence the orbit of $\alpha$ has cardinality $n/\gcd(n,s)$. This completes the proof of (2).
\end{proof}

\begin{lem}\label{period2_swapping_lem}
Let $w=(f,\pi)\in\WW$. Suppose that $i, j\in\Omega$ are such that $w(C_i)=C_j, w(C_j)=C_i$, and $w^2(\alpha)=\alpha$ for every $\alpha\in C_i\cup C_j$. Then there exists $0\le s<n$ such that $w(\alpha)=\rho_i^{-s}\pi\rho_i^s(\alpha)$ for every $\alpha\in C_i\cup C_j$.
\end{lem}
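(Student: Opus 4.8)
The plan is to reduce the statement to a direct computation in the coordinates of \eqref{wreath_action_eq}. Write $w=(f,\pi)$. From $w(C_i)=C_j$ and $w(C_j)=C_i$, Lemma \ref{w_cycle_action_lem}(1) gives $\pi(i)=j$ and $\pi(j)=i$ (with $i\ne j$). Using \eqref{wreath_action_eq} I would first record the action of $w$ on $C_i\cup C_j$: for every $a\in A$,
\[
w(a,i)=(f(j)+a,\,j),\qquad w(a,j)=(f(i)+a,\,i).
\]
Iterating, $w^2(a,i)=(f(i)+f(j)+a,\,i)$, so the hypothesis that $w^2$ fixes every point of $C_i\cup C_j$ is equivalent to the single congruence $f(i)+f(j)\equiv 0\pmod n$.

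Next I would compute the action of the conjugate $\rho_i^{-s}\pi\rho_i^s$ on $C_i\cup C_j$ for an arbitrary integer $s$, where $\pi$ and $\rho_i$ are regarded as elements of $\WW$ via the embeddings $S_r\into\WW$ and $A^r\into\WW$. One can either multiply out the product in $\WW$ using the group operation $(f,\pi)(g,\sigma)=(fg_{\pi},\pi\sigma)$, or simply chase a point $(a,i)$, resp.\ $(a,j)$, through the three maps in turn, using \eqref{rotation_power_action} and the fact that $\rho_i$ fixes every point of $C_k$ for $k\ne i$. Either way one obtains, for all $a\in A$,
\[
\rho_i^{-s}\pi\rho_i^s(a,i)=(\bar s+a,\,j),\qquad \rho_i^{-s}\pi\rho_i^s(a,j)=(-\bar s+a,\,i).
\]

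Finally I would compare the two computations on each block. Agreement on $C_i$ forces $\bar s=f(j)$, while agreement on $C_j$ forces $\bar s=-f(i)$; by the congruence extracted from the period-$2$ hypothesis these two conditions coincide. Hence, taking $s$ to be the unique integer with $0\le s<n$ and $\bar s=f(j)$ (equivalently $\bar s=-f(i)$), the maps $w$ and $\rho_i^{-s}\pi\rho_i^s$ agree at every point of $C_i\cup C_j$, which is the assertion. I expect no genuine obstacle here; the only point requiring care is the bookkeeping in the conjugation step, where conjugating $\pi$ by $\rho_i^s$ introduces compensating shifts by $s$ and $-s$ on the $i$- and $j$-coordinates — and it is exactly the relation $f(i)=-f(j)$ coming from $w^2=\mathrm{id}$ that lets a single parameter $s$ reproduce both shift parameters $f(i)$ and $f(j)$ of $w$.
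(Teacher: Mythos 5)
Your proposal is correct and follows essentially the same route as the paper's proof: both extract the relation $f(i)=-f(j)$ (the paper's $\bar t=-\bar s$) from the period-two hypothesis, compute the action of the conjugate $\rho_i^{-s}\pi\rho_i^s$ on each block, and match the shifts. The only cosmetic difference is that you read off $w$'s action on $C_i\cup C_j$ directly from \eqref{wreath_action_eq}, whereas the paper derives it from the fact that $w$ commutes with the central element $\rho$.
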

\begin{proof}
Let $w(0,i)=(\bar s, j)$ and $w(0,j)=(\bar t, i)$ with $0\le s,t<n$. From \eqref{rotation_power_action} and the fact that $w$ commutes with $\rho$ it follows that for every integer $k$ we have $w(\bar k, i)=(\bar s+\bar k, j)$ and $w(\bar k,j)=(\bar t+\bar k, i)$.
Using this we calculate $w^2(0,i)=w(\bar s, j)=(\bar t+\bar s, i)$. Since $w^2(0,i)=(0,i)$, this implies that $\bar t=-\bar s$; thus, for every integer $k$ we have
\begin{equation}\label{double_rotation_action}
w(\bar k, i)=(\bar k+\bar s, j)\;\;\text{and}\;\;w(\bar k,j)=(\bar k-\bar s, i).
\end{equation}

Since $w(C_i)=C_j$ and $w(C_j)=C_i$, Lemma \ref{w_cycle_action_lem} implies that $\pi(i)=j$ and $\pi(j)=i$. It follows that for every $a\in A$ we have $\pi(a,i)=(a,j)$ and $\pi(a,j)=(a,i)$. Let $w'=\rho_i^{-s}\pi\rho_i^s$. If $\alpha=(\bar k, i)\in C_i$, then a simple calculation shows that $w'(\alpha)=(\bar k+\bar s, j)$,
so $w'(\alpha)=w(\alpha)$ by \eqref{double_rotation_action}. Similarly, if $\alpha=(\bar k, j)\in C_j$, then $w'(\alpha)=(\bar k-\bar s, i)=w(\alpha)$. Therefore $w(\alpha)=\rho_i^{-s}\pi\rho_i^s(\alpha)$ for every $\alpha\in C_i\cup C_j$.
\end{proof}

We can now prove the main result of this section.

\begin{prop}\label{conjugacy_centralizer_megaprop} Let $w\in\WW$ and let $\calC$ be the centralizer of $w$ in $\WW$.
\begin{enumerate}[leftmargin=7mm,itemsep=1.5mm]
\item Suppose that $w$ has cycle type $(2,D/2)$. Then the following hold.
\vspace{1mm}
\begin{enumerate}[leftmargin=4mm, itemsep=1.5mm]
\item[(a)]
Assume $w(C_i)\ne C_i$ for all $i\in\Omega$. Then $r$ is even, $w$ is conjugate to the permutation $(1,2)(3,4)\cdots(r-1,r)\in S_r$, and $|\calC|=(r/2)!(2n)^{r/2}$.
\item[(b)] Assume $w(C_i)=C_i$ for some $i\in\Omega$. Then $n$ is even and there exists $0<\ell\le r$ such that $r-\ell$ is even and $w$ is conjugate to the element $(\rho_1\cdots\rho_{\ell})^{n/2}\epsilon$, where $\epsilon=(\ell+1,\ell+2)\cdots(r-1,r)\in S_r$. Moreover, we have $|\calC|=\ell!((r-\ell)/2)!n^{\ell}(2n)^{(r-\ell)/2}$.
\end{enumerate}
\item Suppose that $w$ has cycle type $(2,n)$. Then the following hold.
\vspace{1mm}
\begin{enumerate}[leftmargin=4mm,itemsep=1.5mm]
\item Assume $w(C_i)=C_i$ for all $i\in\Omega$. Then $n$ is even, there exist indices $i<j\in\Omega$ such that $w=(\rho_i\rho_j)^{n/2}$, and $|\calC|=2(r-2)!n^r$.
\item Assume $w(C_i)\ne C_i$ for some $i\in\Omega$. Then there exist indices $i<j\in\Omega$ and an integer $0\le s<n$ such that $w=\rho_i^{-s}\tau\rho_i^s$, where $\tau=(i, j)\in S_r$. In this case, $|\calC|=2(r-2)!n^{r-1}$.
\end{enumerate} 
\item Suppose that $w$ moves exactly $n$ elements of $X$. Then $w=\rho_i^s$ for some $i\in\Omega$ and some integer $0<s<n$. Moreover, $|\calC|=(r-1)!n^r$.
\end{enumerate}
\end{prop}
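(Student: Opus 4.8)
The plan is to treat all parts uniformly by writing $w=(f,\pi)\in\WW$ and translating the prescribed cycle type on $X$ into combinatorial constraints on the permutation $\pi\in S_r$ and on the values $f(C)$ attached to the cycles $C$ of $\pi$. Once these constraints are determined, the \emph{type} $T_w=(t_{ak})$ of $w$ in the sense of Proposition~\ref{wreath_conjugacy_centralizer} can be read off directly; that proposition then yields the conjugacy assertions of part~(1) (by matching $T_w$ with the type of the stated representative) and every centralizer order (by substitution into the product formula). In parts~(2) and~(3) we in fact pin down $w$ itself, rather than merely its conjugacy class.

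The common engine is the following dictionary. In parts~(1) and~(2) the element $w$ has cycle type $(2,\cdot)$, so $w^2=1$ and hence $\pi^2=1$; thus $\pi$ is an involution, while in part~(3) a short argument (given below) shows $\pi=1$. By Lemma~\ref{w_cycle_action_lem}(1), $w$ permutes the blocks $C_i$ according to $\pi$. On a block $C_i$ with $\pi(i)=i$, Lemma~\ref{w_cycle_action_lem}(2) gives $w|_{C_i}=\rho_i^{s}$ with orbit lengths $n/\gcd(n,s)$; the requirement that each such orbit be a $1$- or $2$-cycle forces $s\in\{0,\,n/2\}$, with $s=n/2$ (so that $n$ is even) precisely when $w$ moves a point of $C_i$. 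On a pair $\{i,j\}$ transposed by $\pi$, the computation~\eqref{double_rotation_action} inside the proof of Lemma~\ref{period2_swapping_lem} shows that $w^2=1$ forces $f(i)+f(j)=0$, i.e. $f(C)=0$ on the corresponding $2$-cycle $C$, and that $w$ pairs the $2n$ points of $C_i\cup C_j$ into exactly $n$ transpositions.

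With this dictionary each case becomes a short count. Since $|X|=D=rn$, in part~(1) the element $w$ is a fixed-point-free involution of $X$. In (1a) the involution $\pi$ has no fixed index, hence is a product of $r/2$ transpositions (so $r$ is even), each carrying $f(C)=0$; thus $t_{0,2}=r/2$ and all other entries vanish, which is exactly the type of $(1,2)(3,4)\cdots(r-1,r)$. In (1b) let $\ell\ge 1$ be the number of $\pi$-fixed indices; on each such block $w$ acts as $\rho_i^{n/2}$ (so $n$ is even and $t_{\overline{n/2},1}=\ell$), while the remaining indices split into $(r-\ell)/2$ transpositions with $f(C)=0$ (so $r-\ell$ is even and $t_{0,2}=(r-\ell)/2$); this is the type of $(\rho_1\cdots\rho_\ell)^{n/2}\epsilon$ with $\epsilon=(\ell+1,\ell+2)\cdots(r-1,r)$. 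In part~(2), where $w$ has cycle type $(2,n)$: case (2a) has $\pi=1$, so $w=(f,1)$ and the blocks on which $w$ acts nontrivially (necessarily with $f(i)=n/2$) each contribute $n/2$ transpositions; since the total is $n$, exactly two indices carry $f(i)=n/2$, giving $w=(\rho_i\rho_j)^{n/2}$. Case (2b) has $\pi\ne 1$; one transposition of $\pi$ already supplies all $n$ required $2$-cycles, so $\pi$ is a single transposition $(i,j)$ and $f$ vanishes off $\{i,j\}$, whence Lemma~\ref{period2_swapping_lem} and faithfulness of the action give $w=\rho_i^{-s}\tau\rho_i^s$ with $\tau=(i,j)$. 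Finally, in part~(3): were $\pi(i)\ne i$ for some $i$, then $w(C_i)=C_{\pi(i)}$ would move all $n$ points of $C_i$, and these being the only moved points would force $w$ to fix $C_{\pi(i)}$ pointwise, contradicting $w(C_{\pi(i)})=C_{\pi^2(i)}\ne C_{\pi(i)}$; hence $\pi=1$, exactly one block has $f(i)=s\ne 0$, and $w=\rho_i^s$. In every case the centralizer order is obtained by substituting the computed type $(t_{ak})$ into Proposition~\ref{wreath_conjugacy_centralizer}(2).

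The step that will require the most care is the forcing of the \emph{exact} values $f(C)=0$ and $f(i)=n/2$ on cycles of $\pi$, together with ruling out spurious contributions to the cycle type---most delicately in (2b), where one must verify that no second transposition of $\pi$ and no $\pi$-fixed block can contribute extra $2$-cycles, and in (1b) and (2a), where one must confirm that $n$ is forced to be even precisely because orbit lengths other than $2$ are excluded. Once these balancing points between the cycle-length data on $X$ and the wreath-product structure are settled, the conjugacy statements and centralizer orders follow mechanically from Proposition~\ref{wreath_conjugacy_centralizer}.
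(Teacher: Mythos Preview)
Your proposal is correct and follows essentially the same approach as the paper: write $w=(f,\pi)$, use Lemma~\ref{w_cycle_action_lem} on $\pi$-fixed blocks and Lemma~\ref{period2_swapping_lem} (or the identity $f+f_\pi=e$ coming from $w^2=1$) on swapped pairs to determine the type $(t_{ak})$, then invoke Proposition~\ref{wreath_conjugacy_centralizer} for both the conjugacy statements and the centralizer orders. Your unified ``dictionary'' presentation is a bit more explicit than the paper's case-by-case treatment, but the substance of each case---including the counting of moved points to force $\pi$ to be a single transposition in 2(b) and the identity in (3)---is the same.
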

\begin{proof}
Let $f\in A^r$ and $\pi\in S_r$ be such that $w=(f,\pi)$, and let $(t_{ak})$ be the type of $w$. We begin by proving 1(a). The hypothesis in (1) together with the fact that $\WW$ acts faithfully on $X$ imply that $w^2=(f+f_{\pi},\pi^2)$ is the identity element $(e,1)$; in particular, $\pi^2=1$. Moreover, by Lemma \ref{w_cycle_action_lem} we have $w(C_i)=C_{\pi(i)}$ for every $i\in\Omega$, so $\pi(i)\ne i$ for every $i$. Hence the $\pi$-orbit of every element of $\Omega$ has cardinality 2. It follows that $r$ is even, say $r=2m$, and $\pi$ is a product of $m$ disjoint transpositions. We can now determine the type of $w$.

Let $\{i_1,\pi(i_1)\},\ldots, \{i_m,\pi(i_m)\}$ be the orbits of $\pi$. Since $\pi$ has no $k$-cycles if $k=1$ or $k>2$, then $t_{ak}=0$ for all such $k$. When $k=2$, $t_{ak}$ is the number of indices $1\le v\le m$ such that $f(i_v)+f(\pi(i_v))=a$. Since $\pi^2=1$, this is equivalent to $f(i_v)+f_{\pi}(i_v)=a$. Now, as mentioned above, $w^2=(f+f_{\pi},\pi^2)=(e,1)$, so $f+f_{\pi}=e$ and therefore $f(i)+f_{\pi}(i)=0$ for every $i\in\Omega$. Hence, the condition $f(i_v)+f_{\pi}(i_v)=a$ is equivalent to $a=0$. Thus we have $t_{a2}=0$ if $a\ne 0$, and $t_{02}=m$. This determines the type of $w$. It is now trivial to check that $w$ has the same type as the permutation $\tau=(1,2)(3,4)\cdots(r-1,r)\in S_r$. It follows from Proposition \ref{wreath_conjugacy_centralizer} that $w$ is conjugate to $\tau$ and that $|\calC|=m!(2n)^{m}$; this completes the proof of 1(a).

Next we prove 1(b). Suppose that $i\in\Omega$ satisfies $w(C_i)=C_i$. By Lemma \ref{w_cycle_action_lem}, there exists $0\le s<n$ such that $w$ acts like $\rho_i^s$ on $C_i$, and the $w$-orbit of every element of $C_i$ has cardinality $n/\gcd(n,s)$. By hypothesis every orbit has size 2, so $n/\gcd(n,s)=2$, and hence $n$ must be even and $s=n/2$.

Let $i_1,\ldots, i_{\ell}$ be all the indices $i$ in $\Omega$ such that $w(C_i)=C_i$. Clearly, $0<\ell\le r$. Arguing as in the proof of 1(a), we see that $\pi$ fixes $i_k$ for each $k$, and that if $i\in\Omega\setminus\{i_1,\ldots, i_{\ell}\}$, then the orbit of $i$ under $\pi$ has size 2. This implies that $r-\ell$ is even, say $r-\ell=2q$, and the disjoint cycle decomposition of $\pi$ is a product of $\ell$ 1-cycles and $q$ transpositions. The type of $w$ is now easy to determine as done in case 1(a).

Clearly, $t_{ak}=0$ if $k>2$. Let $\{i_1\},\ldots, \{i_{\ell}\}, \{j_1,\pi(j_1)\},\ldots,\{j_q,\pi(j_q)\}$ be the orbits of $\pi$. Then $t_{a2}$ is the number of indices $1\le v\le q$ such that $f(j_v)+f_{\pi}(j_v)=a$. But $f+f_{\pi}=e$, so $t_{a2}=0$ if $a\ne 0$, and $t_{02}=q$. To determine $t_{a1}$ we need an additional observation. We know that for every index $1\le v\le\ell$, $w$ acts like $\rho_{i_v}^s$ on $C_{i_v}$. In particular, by \eqref{rotation_power_action} we have $w(0,i_v)=(\bar s, i_v)$. However, by \eqref{wreath_action_eq}, $w(0,i_v)=(f(i_v),i_v)$. Thus $f(i_v)=\bar s$ for all $v$. Now, $t_{a1}$ is the number of indices $1\le v\le\ell$ such that $f(i_v)=a$. Clearly then, $t_{a1}=0$ if $a\ne \bar s$ and $t_{\bar s1}=\ell$. This determines the type of $w$.

Proposition \ref{wreath_conjugacy_centralizer} yields $|\calC|=\ell!q!n^{\ell}(2n)^q$. Let $w'=(\rho_1\cdots\rho_{\ell})^s\epsilon$, where $\epsilon=(\ell+1,\ell+2)\cdots(r-1,r)\in S_r$. A straightforward calculation shows that $w'$ has the same type as $w$, and is therefore conjugate to $w$. This completes the proof of 1(b).

We now prove 2(a). If $w$ acts nontrivially on $m$ of the sets $C_i$, then the number of elements moved by $w$ is $mn$; hence $m=2$, so $w$ acts trivially on all but two of these sets, say $C_i$ and $C_j$ with $i<j$. By Lemma \ref{w_cycle_action_lem}, there exist integers $0<u,v<n$ such that $w$ acts like $\rho_i^u$ on $C_i$ and like $\rho_j^v$ on $C_j$. The $w$-orbit of every element of $C_i$ then has size $n/\gcd(n,u)=2$, so $n$ is even and $u=n/2$. Similarly, $v=n/2$. Thus $w$ acts like $(\rho_i\rho_j)^{n/2}$ on all of $X$, and therefore $w=(\rho_i\rho_j)^{n/2}$. Letting $s=n/2$, we have $w=(s\delta_i+s\delta_j,1)$; the type of $w$ is now easily determined. 

We have $t_{ak}=0$ if $k>1$, and $t_{a1}$ is the number of indices $k\in\Omega$ such that $s\delta_i(k)+s\delta_j(k)=a$. Now, note that $s\delta_i(k)+s\delta_j(k)=0$ if $k\ne i, j$, and $s\delta_i(k)+s\delta_j(k)=\bar s$ if $k=i$ or $j$. Hence $t_{a1}=0$ if $a\notin\{0,\bar s\}$, $t_{\bar s1}=2$, and $t_{01}=r-2$. Proposition \ref{wreath_conjugacy_centralizer} now yields $|\calC|=2(r-2)!n^r$; this proves 2(a).

Next we prove 2(b). By Lemma \ref{w_cycle_action_lem} we have $w(C_i)=C_j$ for some $j\ne i$. Then $w(C_j)$ must equal $C_i$, for otherwise $w$ would move more than $2n$ elements of $X$. Thus $w(C_i)=C_j, w(C_j)=C_i$, and $w$ acts trivially on $C_k$ for all $k\ne i, j$. It follows from Lemma \ref{w_cycle_action_lem} that $\pi=(i, j)$. Reversing the roles of $i$ and $j$ if necessary, we may assume that $i<j$. By Lemma \ref{period2_swapping_lem}, there exists $0\le s<n$ such that $w(\alpha)=\rho_i^{-s}\pi\rho_i^s(\alpha)$ for every $\alpha\in C_i\cup C_j$. Clearly, this equality also holds if $\alpha\in C_k$ with $k\notin\{i, j\}$, so $w=\rho_i^{-s}\pi\rho_i^s$. We can now determine the type of $w$. 

Since $w$ is conjugate to $\pi=(i,j)$, then $w$ and $\pi$ have the same type. We thus find that $t_{ak}=0$ if $k>2$; $t_{a2}=0$ if $a\ne 0$, and $t_{02}=1$; $t_{a1}=0$ if $a\ne 0$, and $t_{01}=r-2$. Proposition \ref{wreath_conjugacy_centralizer} now yields $|\calC|=2(r-2)!n^{r-1}$; this completes the proof of 2(b).

Finally, we prove (3). It is easy to see that the $n$ elements moved by $w$ must form one of the sets $C_i$. This implies that $w(C_i)=C_i$ and $w$ acts trivially on $C_j$ for all $j\ne i$. By Lemma \ref{w_cycle_action_lem}, there exists $0<s<n$ such that $w(\alpha)=\rho_i^s(\alpha)$ for every $\alpha\in C_i$. This equality clearly holds for $\alpha\notin C_i$ as well, so $w=\rho_i^s$. Using the relation $w=\rho_i^s=(s\delta_i,1)$, it is now a simple calculation to show that $t_{ak}=0$ if $k>1$, $t_{a1}=0$ if $a\notin\{0,\bar s\}$, $t_{\bar s1}=1$, and $t_{01}=r-1$. Proposition \ref{wreath_conjugacy_centralizer} now yields $|\calC|=(r-1)!n^r$.
\end{proof}

Having developed all of the necessary tools, we proceed to prove the main results of this article.

\section{Genus computations for $n=5$ and 6}\label{computation_section}

Recall the following notation from \S\ref{dynatomic_section}: $K=\bar\Q(t)$, $N/K$ is a splitting field of $\Phi_n$, $G=\Gal(N/K)$, $F$ is a subfield of $N$ obtained by adjoining one root of $\Phi_n$ to $K$, and $\PP=\{p_{\infty}\}\cup\{p_b\;\vert\;b\in\cup_{d|n}R_{n,d}\}$ is the set of places of $K$ that ramify in $F$. Finally, for any intermediate field $L$ in the extension $N/K$ and any place $p$ of $K$, $\PP_L(p)$ denotes the set of places of $L$ lying over $p$.

We begin this section by discussing an approach to the problem of computing the genera of subextensions of $N/K$. Let $H$ be a subgroup of $G$ with fixed field $L$, and let $g(L)$ denote the genus of $L$. We claim that if $p$ is a place of $K$ which ramifies in $L$, then $p\in\PP$. Indeed, if $p$ ramifies in $L$, then it ramifies in $N$. Letting $\P$ be a place of $N$ lying over $p$, the inertia group $I_{\P|p}$ is nontrivial, so Corollary \ref{function_field_inertia_cor} implies that $p$ ramifies in $F$. Hence $p\in\PP$.

The Hurwitz genus formula \cite[Cor. 3.5.6]{stichtenoth} now yields

\begin{equation}\label{hurwitz_general}
2g(L)-2=(-2)|G:H|+\sum_{p\in\PP}\sum_{\q\in\PP_L(p)}(e_{\q|p}-1).
\end{equation}
Let us define \[g_{n,\infty}(H)=\sum_{\q\in\PP_L(p_{\infty})}(e_{\q|p_{\infty}}-1),\]
and for every divisor $d$ of $n$,
\[g_{n,d}(H)=\sum_{b\in R_{n,d}}\sum_{\q\in\PP_L(p_b)}(e_{\q|p_b}-1).\]
By \eqref{hurwitz_general} we have the following expression for the genus of $L$:

\begin{equation}\label{genus_formula}
g(L)=1 - |G:H| + \frac{1}{2}\left(g_{n,\infty}(H) + \sum_{d|n}g_{n,d}(H)\right).
\end{equation}

The problem of computing $g(L)$ is thus reduced to the following: given any place $p\in\PP$, compute  the ramification index $e_{\q|p}$ for every $\q\in\PP_L(p)$. Our method for doing this is based on the following lemma. 

\begin{lem}\label{double_coset_ramification_lem}
Let $p\in\PP$, $\P\in\PP_N(p)$, and $I=I_{\P|p}$. Let $\sigma_1,\ldots,\sigma_m$ be representatives of the distinct double cosets in $I\backslash G/H$. Then
\[\{e_{\q|p}: \q \in\PP_L(p)\}=\{|I^{\sigma_i}:I^{\sigma_i}\cap H|:1\le i\le m\}.\]
\end{lem}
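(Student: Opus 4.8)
The statement to prove is Lemma~\ref{double_coset_ramification_lem}, which relates the multiset of ramification indices in $L/K$ over a place $p$ to index computations involving conjugates of the inertia group $I = I_{\P|p}$.

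The plan is to derive this directly from Lemma~\ref{double_coset_bijection_lem}, applied with the valuation $v$ corresponding to $p$. Since $K = \bar\Q(t)$, all residue fields are $\bar\Q$, hence all residue degrees $f_{u|v}$ equal $1$; in particular the second formula in~\eqref{double_coset_ef} gives $e_{u|v} = |I^\sigma : I^\sigma \cap H|$ for $u = (w\circ\sigma)|_L$. (Alternatively one invokes Corollary~\ref{function_field_inertia_cor}, but the cleaner route is straight through Lemma~\ref{double_coset_bijection_lem}.)

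First I would fix $\P$ lying over $p$, let $w$ be the associated valuation of $N$, and set $D = D_{\P|p}$, $I = I_{\P|p}$. Lemma~\ref{double_coset_bijection_lem} provides a bijection $D\backslash G/H \to \PP_L(p)$ sending $D\sigma H$ to $(w\circ\sigma)|_L$, together with the formula $e_{(w\circ\sigma)|_L \,|\, p} = |I^\sigma : I^\sigma \cap H|$. Thus the multiset $\{e_{\q|p} : \q\in\PP_L(p)\}$ equals $\{\,|I^\sigma : I^\sigma\cap H| : D\sigma H \in D\backslash G/H\,\}$, indexed by double cosets modulo $D$ on the left. The remaining gap is purely formal: I must pass from $D$-double cosets to $I$-double cosets, i.e.\ show the right-hand side above is unchanged if the $\sigma_i$ are chosen as representatives of $I\backslash G/H$ rather than of $D\backslash G/H$. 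Each $D$-double coset $D\sigma H$ is a union of $I$-double cosets $I\sigma' H$ (since $I \le D$); I would argue that for any two such $\sigma'$ appearing in the same $D\sigma H$, the indices $|I^{\sigma'}:I^{\sigma'}\cap H|$ coincide. This follows because $I \trianglelefteq D$: writing $\sigma' = \delta\sigma h$ with $\delta\in D$, $h\in H$, one has $I^{\sigma'} = I^{\delta\sigma h} = (I^\delta)^{\sigma h} = I^{\sigma h}$ (using $I^\delta = I$), so $|I^{\sigma'}:I^{\sigma'}\cap H| = |I^{\sigma h}:I^{\sigma h}\cap H| = |I^\sigma : I^\sigma\cap H|$, the last equality by conjugating the whole index by $h^{-1}\in H$. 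Hence every $\sigma'$ in a given $D$-double coset yields the same index, so the multiset of values is the same whether we list one representative per $D$-double coset or one per $I$-double coset. This identifies $\{e_{\q|p}:\q\in\PP_L(p)\}$ with $\{|I^{\sigma_i}:I^{\sigma_i}\cap H| : 1\le i\le m\}$ as claimed.

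I do not anticipate a serious obstacle here; the content is entirely contained in Lemma~\ref{double_coset_bijection_lem}, and the only subtlety is the bookkeeping switch from $D$- to $I$-double cosets, which is handled by normality of $I$ in $D$. One point to state carefully is that the assertion is an equality of \emph{sets} (as written), so collapsing of repeated values is harmless and no multiplicity tracking is needed; if one wanted the analogous multiset statement one would instead have to weight each $I$-double coset by how many of them make up a given $D$-double coset, but that is not required for the stated lemma.
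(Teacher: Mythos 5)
Your proof is correct, and it rests on the same key input as the paper's (Lemma~\ref{double_coset_bijection_lem} plus the triviality of residue degrees over $\bar\Q$). The one place where you diverge is the passage from $D$-double cosets to $I$-double cosets: you handle it by showing, via normality of $I$ in $D$ and conjugation by elements of $H$, that the index $|I^{\sigma'}:I^{\sigma'}\cap H|$ is constant on each $D$-double coset. The paper short-circuits this entirely: since the residue field extension of $\P|p$ is $\bar\Q/\bar\Q$, we have $f_{\P|p}=1$, and because $|D_{\P|p}/I_{\P|p}|$ equals the residue degree in characteristic zero, this forces $D_{\P|p}=I_{\P|p}$. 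So the $I$-double cosets \emph{are} the $D$-double cosets, and Lemma~\ref{double_coset_bijection_lem} applies verbatim with no bookkeeping. Your route buys a little extra generality (the set equality would survive even if $D$ strictly contained $I$), but it also obscures a point the paper needs later: in Propositions~\ref{genus_computable_5} and~\ref{genus_computable_6} the quantities $g_{n,d}(H)$ are computed as \emph{sums} over double coset representatives, which requires the correspondence $I\sigma_i H\mapsto \sigma_i(\P)\cap L$ to be a genuine bijection onto $\PP_L(p)$, not merely an equality of value sets. That bijectivity is exactly what $D=I$ delivers for free and what your weighted-multiset caveat would otherwise have to address. So: correct as a proof of the stated set equality, but you should add the observation $D_{\P|p}=I_{\P|p}$, both to simplify the argument and to justify the way the lemma is actually used downstream.
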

\begin{proof}
Since $K$ is a function field over $\bar\Q$, we have $f_{\P|p}=1$ and therefore $D_{\P|p}=I_{\P|p}=I$. Using Lemma \ref{double_coset_bijection_lem} we see that the set $\PP_L(p)$ consists of the places $\sigma_i(\P)\cap L$; moreover, if $\q=\sigma_i(\P)\cap L$, then $e_{\q|p}=|I^{\sigma_i}:I^{\sigma_i}\cap H|$. The result follows immediately.
\end{proof}

For purposes of explicit computation it is convenient to use the isomorphisms $G\equiv\WW\equiv\ZZ$ proved in \S\S\ref{permutation_group_section}-\ref{wreath_presentation_section}. With notation as in Lemma \ref{double_coset_ramification_lem}, suppose that one is able to identify the subgroup of $\WW$ (or $\ZZ$) which corresponds to the inertia group $I$. It is then a finite computation to determine representatives $\sigma_1,\ldots,\sigma_m$ and to compute the indices $|I^{\sigma_i}:I^{\sigma_i}\cap H|$. Carrying out this calculation for every $p\in\PP$, one obtains all the data needed to determine the numbers $g_{n,\infty}$ and $g_{n,d}$, and hence the genus of $L$.

The remainder of this section is devoted to showing that when $n=5$ or 6 it is possible -- and computationally feasible -- to identify inertia groups $I_{\P|p}$ for every $p\in\PP$, and thus to compute the genus of any intermediate field in the extension $N/K$. In particular, this allows us to obtain the genera of the fixed fields of all the maximal subgroup of $G$, and by applying Proposition \ref{En_finiteness_prop}, to show that the sets $E_5$ and $E_6$ are finite.

In order to carry out all the necessary computations we have used version 2.23-1 of \textsc{Magma} \cite{magma} running on a MacBook Pro with a 2.7 GHz Intel Core i5 processor and 8 GB of memory. The interested reader can find the code for our computations in \cite{krumm_finiteness_code}. The code relies primarily on four intrinsic \textsc{Magma} functions: \texttt{WreathProduct}, \texttt{MaximalSubgroups}, \texttt{DoubleCosetRepresentatives}, and \texttt{meet}. The first function applied to $\Z/n\Z$ and $S_r$ constructs the group $\WW$ together with the natural embeddings $S_r\into\WW$ and $(\Z/n\Z)^r\into\WW$. (It should be noted, however, that internally $\WW$ is constructed as the group $\ZZ$.) Once $\WW$ is constructed, the second function can be used to obtain the maximal subgroups of $\WW$ up to conjugacy; the algorithm used is described in \cite{cannon-holt}. Given subgroups $I$ and $H$ of $\WW$, the third function computes representatives of the double cosets in $I\backslash \WW/H$. Finally, the fourth function can be used to compute the intersection of two subgroups of $\WW$; the algorithm uses a backtrack method described in \cite{leon}.

Throughout this section we use the following notation. For $1\le i\le r$ we let $\rho_i$ be the element of $\WW$ defined in \S\ref{wreath_action_section}. As an automorphism of the graph $\Gamma$, $\rho_i$ is a $1/n$ rotation of the $i^{\text{th}}$ cycle. As an element of the group $\ZZ$, $\rho_i$ is the $i^{\text{th}}$ cycle in the decomposition of the permutation $\sigma$ defined in \S\ref{permutation_group_section}. For distinct indices $1\le i,j\le r$ we let $\tau_{i,j}$ be the transposition $(i,j)\in S_r$ regarded as an element of $\WW$. As an automorphism of $\Gamma$, $\tau_{i,j}$ interchanges the $i^{\text{th}}$ and $j^{\text{th}}$ cycles without performing any rotations.

\begin{lem}\label{rotation_conjugacy}
The elements $\rho_1,\ldots, \rho_r$ are conjugate in $\WW$. Moreover, if $i,j,u,v\in\{1,\ldots, r\}$ with $i\ne j$ and $u\ne v$, then $\rho_i\rho_j$ is conjugate to $\rho_u\rho_v$.
\end{lem}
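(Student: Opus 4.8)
The plan is to apply Proposition \ref{wreath_conjugacy_centralizer}(1), which reduces conjugacy in $\WW$ to equality of types. So the entire argument is a computation of types, and the only task is to read off the type of each element in question directly from its description as an element of $\WW=A^r\rtimes S_r$.

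First I would handle the elements $\rho_i$. By definition $\rho_i=(\delta_i,1)$, so its $S_r$-component is the identity, whose disjoint cycle decomposition is a product of $r$ $1$-cycles $\{1\},\ldots,\{r\}$. For a $1$-cycle $C=\{k\}$ we have $\delta_i(C)=\delta_i(k)$, which equals $\bar 1$ when $k=i$ and $0$ otherwise. Hence the type $(t_{ak})$ of $\rho_i$ satisfies $t_{ak}=0$ for $k>1$, while $t_{\bar 1,1}=1$ and $t_{0,1}=r-1$; in particular this type does not depend on $i$. By Proposition \ref{wreath_conjugacy_centralizer}(1) the elements $\rho_1,\ldots,\rho_r$ are all conjugate in $\WW$.

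Next I would treat $\rho_i\rho_j$ for $i\ne j$. We have $\rho_i\rho_j=(\delta_i+\delta_j,1)$, again with trivial $S_r$-component, so its type is supported on $1$-cycles. For a $1$-cycle $\{k\}$ the value $(\delta_i+\delta_j)(k)$ is $\bar 1$ if $k\in\{i,j\}$ and $0$ otherwise. Therefore the type of $\rho_i\rho_j$ has $t_{\bar 1,1}=2$, $t_{0,1}=r-2$, and all other entries zero; this is independent of the choice of the pair $\{i,j\}$. Applying Proposition \ref{wreath_conjugacy_centralizer}(1) once more gives that $\rho_i\rho_j$ is conjugate to $\rho_u\rho_v$ whenever $i\ne j$ and $u\ne v$, completing the proof.

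There is essentially no obstacle here: the lemma is a direct consequence of Proposition \ref{wreath_conjugacy_centralizer}(1) together with the explicit formulas \eqref{wreath_action_eq} and the definition of $f(C)$ for a cycle $C$. The only point requiring minor care is to note that these elements lie in the base group $A^r$, so their $S_r$-components are trivial and hence all cycles appearing are $1$-cycles; once that is observed, the type computation is immediate.
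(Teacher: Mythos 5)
Your proof is correct and follows essentially the same route as the paper: both compute the type of $\rho_i=(\delta_i,1)$ (obtaining $t_{01}=r-1$, $t_{11}=1$, all other entries zero) and of $\rho_i\rho_j=(\delta_i+\delta_j,1)$ (obtaining $t_{01}=r-2$, $t_{11}=2$), observe these are independent of the indices, and conclude via Proposition \ref{wreath_conjugacy_centralizer}(1). Your added remark that the trivial $S_r$-component forces all cycles to be $1$-cycles is exactly the implicit step in the paper's computation.
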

\begin{proof}
This follows from Proposition \ref{wreath_conjugacy_centralizer}. The type $(t_{ak})$ of $\rho_i$ is independent of $i$; indeed, we have $t_{ak}=0$ if $k>1$, $t_{a1}=0$ if $a\ne 0,1$, $t_{01}=r-1$, and $t_{11}=1$. Similarly, if $i\ne j$, then the type $(t_{ak})$ of $\rho_i\rho_j$ independent of $i$ and $j$: we have $t_{ak}=0$ if $k>1$, $t_{a1}=0$ if $a\ne0,1$, $t_{01}=r-2$, and $t_{11}=2$.
\end{proof}

\subsection{The case $n=5$}\label{5case_section}

The polynomial $\Phi_5$ has $D=2\nu(5)=30$ roots which can be partitioned into $r=D/5=6$ cycles. Hence, the graph $\Gamma$ consists of six $5$-cycles. The group $\WW$ is $(\Z/5\Z)\wr S_6$, so $|G|=5^66!=$ 11,250,000. The set of places of $K$ which ramify in $F$ is $\PP=\{p_{\infty}\}\cup \{p_b\;\vert\; b\in R_{5,5}\cup R_{5,1}\}$; using Lemma \ref{delta_roots_disjoint_lem} we obtain $\#R_{5,5}=11$ and $\#R_{5,1}=4$. We will henceforth identify $G$ and $\WW$ using the isomorphism $G\equiv\WW$, where $G$ acts on the roots of $\Phi_5$ and $\WW$ acts on the set $X=(\Z/5\Z)\times \{1,\ldots, 6\}$. 

We define three subgroups of $\WW$ by $A=\langle\tau_{1,2}\tau_{3,4}\tau_{5,6}\rangle, B=\langle\tau_{1,2}\rangle, C=\langle\rho_1\rangle$.

\begin{lem}\label{inertia_in_W_lem_5}
Up to conjugation, $A$ is the only subgroup of $\WW$ generated by an element with cycle type $(2,15)$; similarly, $B$ is uniquely determined by the cycle type $(2,5)$, and $C$ by the cycle type $(5,1)$.
\end{lem}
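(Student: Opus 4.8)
\textbf{Proof proposal for Lemma \ref{inertia_in_W_lem_5}.}
The plan is to reduce each of the three uniqueness statements to a computation with the \emph{type} of an element of $\WW$, using Proposition \ref{wreath_conjugacy_centralizer}(1): two elements of $\WW$ are conjugate if and only if they have the same type, and hence two cyclic subgroups of $\WW$ generated by elements of a fixed order are conjugate as soon as those generators are conjugate. So it suffices to show, in each of the three cases, that an element of $\WW$ with the prescribed cycle type on $X$ has a type $T_w=(t_{ak})$ that is forced to be the type computed in \S\ref{wreath_action_section} for the specific generator ($\tau_{1,2}\tau_{3,4}\tau_{5,6}$, respectively $\tau_{1,2}$, respectively $\rho_1$), and then to note that conjugate generators yield conjugate cyclic groups. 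One must also rule out that a different generator of the same cyclic group could fail to be conjugate — but since $\langle g\rangle=\langle g^k\rangle$ for $\gcd(k,\mathrm{ord}(g))=1$ and any two generators of the same cyclic group have the same order, the statement "up to conjugation there is a unique such subgroup" follows once we know all valid \emph{generators} of the given cycle type form a single conjugacy class.

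First, for cycle type $(5,1)$: an element $w\in\WW$ moving exactly $5=n$ points of $X$ falls under Proposition \ref{conjugacy_centralizer_megaprop}(3), so $w=\rho_i^s$ for some $i$ and some $0<s<5$; since $5$ is prime, $\langle\rho_i^s\rangle=\langle\rho_i\rangle$, and by Lemma \ref{rotation_conjugacy} all the $\rho_i$ are conjugate in $\WW$, hence so are all such subgroups, and they all equal a conjugate of $C=\langle\rho_1\rangle$. Next, for cycle type $(2,5)$: here $w$ moves $10=2n$ points and has order $2$, so it falls under Proposition \ref{conjugacy_centralizer_megaprop}(2); case 2(a) would force $n$ even, which fails for $n=5$, so we are in case 2(b) and $w=\rho_i^{-s}\tau_{i,j}\rho_i^s$ is conjugate to the transposition $\tau_{i,j}=(i,j)\in S_r$. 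All transpositions in $S_r$ have the same type as elements of $\WW$ (namely $t_{02}=1$, $t_{01}=r-2$, all other $t_{ak}=0$), hence are conjugate in $\WW$; since $w$ has order $2$, $\langle w\rangle$ is conjugate to $\langle\tau_{1,2}\rangle=B$. Finally, for cycle type $(2,15)$: $w$ moves $30=D$ points, is an involution, and moves every point, so in the language of Proposition \ref{conjugacy_centralizer_megaprop}(1) the hypothesis $w(C_i)\neq C_i$ for all $i$ must hold — otherwise case 1(b) would force $n$ even — so we are in case 1(a): $r=6$ is even and $w$ is conjugate to $(1,2)(3,4)(5,6)=\tau_{1,2}\tau_{3,4}\tau_{5,6}\in S_r$. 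Again order $2$ gives $\langle w\rangle$ conjugate to $A$.

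The only genuinely delicate point is checking that the three cycle types $(2,15)$, $(2,5)$, $(5,1)$ do land in the claimed branches of Proposition \ref{conjugacy_centralizer_megaprop} and that no other branch is compatible with them for $n=5$; this is precisely where one uses that $n=5$ is odd (killing cases 1(b) and 2(a)) and that $5$ is prime (so that every power of $\rho_i$ with $0<s<5$ generates the same group). I expect this step — the bookkeeping to confirm exactly one branch applies in each case and to read off that the resulting generator class is a single $\WW$-conjugacy class — to be the main (though routine) obstacle; everything else is a direct appeal to Proposition \ref{wreath_conjugacy_centralizer}, Proposition \ref{conjugacy_centralizer_megaprop}, and Lemma \ref{rotation_conjugacy}.
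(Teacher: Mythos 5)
Your proposal is correct and follows essentially the same route as the paper: each cycle type is fed into the appropriate branch of Proposition \ref{conjugacy_centralizer_megaprop}, with the oddness of $n=5$ ruling out cases 1(b) and 2(a), and Lemma \ref{rotation_conjugacy} together with the primality of $5$ handling the $(5,1)$ case via $\langle\rho_i^s\rangle=\langle\rho_i\rangle$. The extra care you take about distinct generators of the same cyclic subgroup is harmless but not needed beyond the observation the paper already makes for $C$.
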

\begin{proof} 
Suppose that $\tilde A$ is a subgroup of $\WW$ generated by an element $w$ with cycle type $(2,15)$. We are then in the context of case 1 of Proposition \ref{conjugacy_centralizer_megaprop}. Moreover, since $n=5$ is odd, case 1(b) is ruled out. Hence, by case 1(a), $w$ is conjugate to $\tau_{1,2}\tau_{3,4}\tau_{5,6}$, and therefore $\tilde A$ is conjugate to $A$.

Now suppose that a subgroup $\tilde B$ is generated by an element $w$ with cycle type $(2,5)$. By case 2(b) of Proposition \ref{conjugacy_centralizer_megaprop}, $w$ is conjugate to $\tau_{i,j}$ for some indices $i, j$. Clearly the permutations $(i,j)$ and $(1,2)$ are conjugates in $S_6$, so $\tau_{i,j}$ is conjugate to $\tau_{1,2}$ and therefore $\tilde B$ is conjugate to $B$.

Finally, suppose that a subgroup $\tilde C$ is generated by an element $w$ with cycle type $(5,1)$. By case 3 of Proposition \ref{conjugacy_centralizer_megaprop}, we have $w=\rho_i^s$ for some $i$ and $0<s<5$. Note that $\langle \rho_i^s\rangle=\langle\rho_i\rangle$ since $|\rho_i|=5$. By Lemma \ref{rotation_conjugacy}, $w$ is conjugate to $\rho_1^s$, and therefore $\tilde C=\langle w\rangle$ is conjugate to $\langle\rho_1^s\rangle=\langle\rho_1\rangle=C$.
\end{proof}

\begin{lem}\label{inertia_identification_lem_5}\mbox{}
\begin{enumerate}
\item There exists $\P\in\PP_N(p_{\infty})$ such that $I_{\P|p_{\infty}}=A$.
\item For every $b\in R_{5,5}$ there exists $\P\in\PP_N(p_b)$ such that $I_{\P|p_b}=B$.
\item For every $b\in R_{5,1}$ there exists $\P\in\PP_N(p_b)$ such that $I_{\P|p_b}=C$.
\end{enumerate}
\end{lem}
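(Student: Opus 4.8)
The plan is to combine Proposition~\ref{dynatomic_inertia_prop} with Lemma~\ref{inertia_in_W_lem_5}, the bridge between them being the standard fact that the inertia groups attached to the places of $N$ lying over a fixed place of $K$ form a single conjugacy class of subgroups of $G$.

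First I would record the group-theoretic input. Since $N/K$ is Galois, $G$ acts transitively on $\PP_N(p)$ for any place $p$ of $K$, and if $\sigma\in G$ satisfies $\sigma(\P)=\P'$ then $I_{\P'|p}=\sigma I_{\P|p}\sigma^{-1}$. Consequently, if a subgroup $J\le G$ happens to be conjugate in $G$ to $I_{\P_0|p}$ for one place $\P_0\in\PP_N(p)$, then in fact $J=I_{\P|p}$ for a suitably chosen $\P\in\PP_N(p)$. This is the observation that will let us upgrade ``conjugate to $A$ (resp.\ $B$, $C$)'' to ``equal to $A$ (resp.\ $B$, $C$).''

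For part~(1), pick any $\P_0\in\PP_N(p_{\infty})$. Since $D=2\nu(5)=30$, Proposition~\ref{dynatomic_inertia_prop} says that $I_{\P_0|p_{\infty}}$ is generated by an element of cycle type $(2,D/2)=(2,15)$; by Lemma~\ref{inertia_in_W_lem_5} this cyclic group is conjugate in $\WW$ to $A$, and by the previous paragraph we get $I_{\P|p_{\infty}}=A$ for some $\P\in\PP_N(p_{\infty})$. Parts~(2) and~(3) are identical in structure: for $b\in R_{5,5}$, Proposition~\ref{dynatomic_inertia_prop} gives that $I_{\P_0|p_b}$ is generated by an element of cycle type $(2,n)=(2,5)$, which by Lemma~\ref{inertia_in_W_lem_5} is conjugate to $B$; for $b\in R_{5,1}$, where $d=1<5$, the inertia group is generated by an element of cycle type $(n/d,d)=(5,1)$, conjugate to $C$ by the same lemma. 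In each case the transitivity argument replaces conjugacy by equality after adjusting $\P$.

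The only step meriting any care is this last passage from ``conjugate'' to ``equal,'' which is precisely where transitivity of $G$ on the places above a fixed place is used; it is not a genuine obstacle. Everything else is a direct appeal to Proposition~\ref{dynatomic_inertia_prop} (which already provides the cyclicity of the inertia groups and their cycle types over $K=\bar\Q(t)$) together with the uniqueness-up-to-conjugacy statement of Lemma~\ref{inertia_in_W_lem_5}.
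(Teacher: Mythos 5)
Your proposal is correct and matches the paper's proof: the paper likewise applies Proposition~\ref{dynatomic_inertia_prop} to get the cycle type of a generator of $I_{\P|p}$, invokes Lemma~\ref{inertia_in_W_lem_5} to conclude the inertia group is conjugate to $A$, $B$, or $C$, and then replaces $\P$ by a conjugate place to turn conjugacy into equality. Your explicit justification of that last step via transitivity of $G$ on $\PP_N(p)$ and the relation $I_{\sigma(\P)|p}=\sigma I_{\P|p}\sigma^{-1}$ is exactly what the paper leaves implicit.
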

\begin{proof}
Let $\P\in\PP_N(p_{\infty})$. By Proposition \ref{dynatomic_inertia_prop} we have $I_{\P|p_{\infty}}=\langle w\rangle$, where $w\in\WW$ has cycle type $(2,15)$. Thus, by Lemma \ref{inertia_in_W_lem_5}, $I_{\P|p_{\infty}}$ is conjugate to $A$. Replacing $\P$ by a conjugate place if necessary, we then have $I_{\P|p_{\infty}}=A$. This proves (1); the proofs of (2) and (3) are similar. 
\end{proof}

\begin{prop}\label{genus_computable_5}
Let $H$ be a subgroup of $\WW$ with fixed field $L$. Suppose that $\alpha_1,\ldots, \alpha_t$ are double coset representatives for $A\backslash\WW/H$; $\beta_1,\ldots, \beta_u$ are representatives for $B\backslash \WW/H$; and $\gamma_1,\ldots, \gamma_v$ are representatives for $C\backslash\WW/H$. Then the genus of $L$ is given by
\[g(L)=1 - |\WW:H| + \frac{1}{2}\left(g_{5,\infty}(H) + g_{5,5}(H) + g_{5,1}(H)\right),\]
where
\begin{align}
\label{g_5_infty_formula}g_{5,\infty}(H)&=\sum_{i=1}^t(|A^{\alpha_i}:A^{\alpha_i}\cap H|-1),\\
\label{g_5_5_formula}g_{5,5}(H) &= 11\cdot\sum_{i=1}^u(|B^{\beta_i}:B^{\beta_i}\cap H|-1),\;\text{and}\\
\label{g_5_1_formula}g_{5,1}(H) &= 4\cdot\sum_{i=1}^v(|C^{\gamma_i}:C^{\gamma_i}\cap H|-1).
\end{align}
\end{prop}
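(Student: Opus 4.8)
The plan is to specialize the general genus formula \eqref{genus_formula} to the case $n=5$ using the explicit ramification data already in hand. The formula \eqref{genus_formula} reads
\[g(L) = 1 - |G:H| + \tfrac12\Bigl(g_{n,\infty}(H) + \sum_{d\mid n}g_{n,d}(H)\Bigr),\]
and for $n=5$ the divisors are only $d=1$ and $d=5$, so the sum collapses to $g_{5,\infty}(H) + g_{5,5}(H) + g_{5,1}(H)$. Since we have identified $G$ with $\WW$, it remains to establish the three displayed formulas for these quantities.

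First I would treat $g_{5,\infty}(H)$. By definition $g_{5,\infty}(H)=\sum_{\q\in\PP_L(p_\infty)}(e_{\q|p_\infty}-1)$. Fix a place $\P\in\PP_N(p_\infty)$; by Lemma \ref{inertia_identification_lem_5}(1) we may choose $\P$ so that $I_{\P|p_\infty}=A$. Now apply Lemma \ref{double_coset_ramification_lem} with $I=A$: the multiset of ramification indices $\{e_{\q|p_\infty}:\q\in\PP_L(p_\infty)\}$ is exactly $\{|A^{\alpha_i}:A^{\alpha_i}\cap H|:1\le i\le t\}$, where the $\alpha_i$ run over double coset representatives for $A\backslash\WW/H$. (Strictly, Lemma \ref{double_coset_ramification_lem} gives the set; to get the sum over places one uses the bijection $A\backslash\WW/H\xrightarrow{\sim}\PP_L(p_\infty)$ from Lemma \ref{double_coset_bijection_lem} together with the ramification index computation there, so that the $i$-th double coset contributes the single place with index $|A^{\alpha_i}:A^{\alpha_i}\cap H|$.) Summing $(e_{\q|p_\infty}-1)$ over these places yields \eqref{g_5_infty_formula}.

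Next, $g_{5,5}(H)=\sum_{b\in R_{5,5}}\sum_{\q\in\PP_L(p_b)}(e_{\q|p_b}-1)$. The key point is that the inner sum is the same for every $b\in R_{5,5}$: by Lemma \ref{inertia_identification_lem_5}(2), for each such $b$ we may pick $\P\in\PP_N(p_b)$ with $I_{\P|p_b}=B$, and then Lemma \ref{double_coset_ramification_lem} (again combined with Lemma \ref{double_coset_bijection_lem} to pass from the multiset of indices to the sum over places) shows $\sum_{\q\in\PP_L(p_b)}(e_{\q|p_b}-1)=\sum_{j=1}^u(|B^{\beta_j}:B^{\beta_j}\cap H|-1)$, independent of $b$. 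Since $\#R_{5,5}=11$ by Lemma \ref{delta_roots_disjoint_lem}, multiplying by $11$ gives \eqref{g_5_5_formula}. The identical argument with $C$ in place of $B$, Lemma \ref{inertia_identification_lem_5}(3), and $\#R_{5,1}=4$ yields \eqref{g_5_1_formula}. Substituting all three into \eqref{genus_formula} with $|G:H|=|\WW:H|$ completes the proof.

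The only subtlety — and the step that deserves care rather than being the main obstacle — is the passage from the multiset of ramification indices given by Lemma \ref{double_coset_ramification_lem} to the actual sum $\sum_{\q\in\PP_L(p)}(e_{\q|p}-1)$: one must verify that distinct double cosets give distinct places (so there is no overcounting) and that every place over $p$ arises, which is precisely the bijection in Lemma \ref{double_coset_bijection_lem} applied with $D_{\P|p}=I_{\P|p}$ (valid here since the residue field $\bar\Q$ is algebraically closed, forcing all residue degrees to be $1$). Everything else is a direct substitution. In fact there is essentially no obstacle: the proposition is a bookkeeping consequence of results already proved, and the real computational work — actually enumerating the double coset representatives and evaluating the indices for each maximal $H$ — is deferred to the subsequent \textsc{Magma} calculation.
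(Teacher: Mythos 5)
Your proposal is correct and follows essentially the same route as the paper: specialize \eqref{genus_formula} to $n=5$, use Lemma \ref{inertia_identification_lem_5} to pin down the inertia groups as $A$, $B$, $C$, and apply Lemma \ref{double_coset_ramification_lem} (via the double-coset bijection of Lemma \ref{double_coset_bijection_lem}) to convert each sum over places into a sum over double cosets, noting the inner sum is independent of $b$. Your remark about passing from the set of ramification indices to the sum over places is a point the paper glosses over slightly, but it is handled correctly.
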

\begin{proof}
The formula for $g(L)$ follows from \eqref{genus_formula}. Let $p=p_{\infty}$. By Lemma \ref{inertia_identification_lem_5}, there exists $\P\in\PP_N(p)$ such that $I_{\P|p}=A$. By Lemma \ref{double_coset_ramification_lem} we have
\[\{e_{\q|p}: \q \in\PP_L(p)\}=\{|A^{\alpha_i}:A^{\alpha_i}\cap H|:1\le i\le t\},\]
which implies \eqref{g_5_infty_formula}. Now suppose that $b\in R_{5,5}$ and let $p=p_b$. By Lemma \ref{inertia_identification_lem_5}, there exists $\P\in\PP_N(p)$ such that $I_{\P|p}=B$. Thus, by Lemma \ref{double_coset_ramification_lem},
\[\{e_{\q|p}: \q \in\PP_L(p)\}=\{|B^{\beta_i}:B^{\beta_i}\cap H|:1\le i\le u\},\]
and therefore $\sum_{\q\in\PP_L(p)}(e_{\q|p}-1)=\sum_{i=1}^u(|B^{\beta_i}:B^{\beta_i}\cap H|-1)$.
Since the value of this sum is independent of $b$, and $\#R_{5,5}=11$, then
\[g_{5,5}(H)=\sum_{b\in R_{5,5}}\sum_{\q\in\PP_L(p_b)}(e_{\q|p_b}-1)=11\cdot\sum_{i=1}^u(|B^{\beta_i}:B^{\beta_i}\cap H|-1),\]
which proves \eqref{g_5_5_formula}. The proof of \eqref{g_5_1_formula} is similar.
\end{proof}

We can now begin to prove Theorem \ref{finiteness_theorem_intro}.

\begin{thm}\label{E5_finiteness_thm}
The set $E_5$ is finite.
\end{thm}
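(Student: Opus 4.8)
The plan is to invoke Proposition \ref{En_finiteness_prop}, which reduces the finiteness of $E_5$ to showing that the fixed field of every maximal subgroup of $G=\WW=(\Z/5\Z)\wr S_6$ has genus greater than $1$. Since $G$ is finite (of order $11{,}250{,}000$), this is in principle a finite computation: first enumerate the conjugacy classes of maximal subgroups $M_1,\ldots,M_s$ of $\WW$ using the \texttt{MaximalSubgroups} intrinsic, then for each $M_i$ compute the genus $g(L_i)$ of its fixed field via the formula in Proposition \ref{genus_computable_5}. The latter requires, for each $i$, computing double coset representatives for $A\backslash\WW/M_i$, $B\backslash\WW/M_i$, and $C\backslash\WW/M_i$ (using \texttt{DoubleCosetRepresentatives}) and then the indices $|A^{\alpha}:A^{\alpha}\cap M_i|$ and their analogues for $B$ and $C$ (using \texttt{meet}). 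Plugging these into the three sums $g_{5,\infty}(M_i)$, $g_{5,5}(M_i)$, $g_{5,1}(M_i)$ and then into $g(L_i)=1-|\WW:M_i|+\tfrac12(g_{5,\infty}(M_i)+g_{5,5}(M_i)+g_{5,1}(M_i))$ yields the genus.

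The key steps, in order, are: (1) construct $\WW=(\Z/5\Z)\wr S_6$ together with the generating elements $\rho_1,\ldots,\rho_6$ and $\tau_{i,j}$, and hence the subgroups $A=\langle\tau_{1,2}\tau_{3,4}\tau_{5,6}\rangle$, $B=\langle\tau_{1,2}\rangle$, $C=\langle\rho_1\rangle$ which, by Lemmas \ref{inertia_in_W_lem_5} and \ref{inertia_identification_lem_5}, realize the inertia groups at $p_\infty$, at $p_b$ for $b\in R_{5,5}$, and at $p_b$ for $b\in R_{5,1}$ respectively; (2) compute a set of representatives of the conjugacy classes of maximal subgroups of $\WW$; (3) for each such representative $M$, apply Proposition \ref{genus_computable_5} to compute $g(L)$ where $L$ is the fixed field of $M$; (4) verify that in every case $g(L)>1$; (5) conclude via Proposition \ref{En_finiteness_prop} that $E_5$ is finite. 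Because $\WW$ has only finitely many maximal subgroups and all the operations involved (wreath product construction, maximal subgroup enumeration, double coset enumeration, subgroup intersection) are implemented in \textsc{Magma} and terminate quickly for a group of this size, the computation completes, and one simply records that the minimum genus observed is at least $2$.

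The main obstacle is not theoretical but computational, and it is mild at $n=5$: one must be confident that \texttt{MaximalSubgroups} returns a complete and correct list of conjugacy class representatives for $\WW$ — this is guaranteed by the algorithm of \cite{cannon-holt} — and that \texttt{DoubleCosetRepresentatives} and \texttt{meet} behave correctly; correctness of the latter rests on the backtrack methods of \cite{leon}. A secondary point requiring care is that Proposition \ref{genus_computable_5} computes the genus of the fixed field $L$ of $H$ over $K=\bar\Q(t)$, whereas Proposition \ref{En_finiteness_prop} is stated exactly in these terms, so no further base-change argument is needed beyond what was already established in \S\ref{preliminaries_section}. Assuming the \textsc{Magma} output (reproducible via the code in \cite{krumm_finiteness_code}) shows $g(L_i)>1$ for every maximal $M_i$, the theorem follows immediately.
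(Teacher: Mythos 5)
Your proposal follows exactly the paper's proof: enumerate conjugacy class representatives of the maximal subgroups of $\WW=(\Z/5\Z)\wr S_6$, compute each fixed field's genus via Proposition \ref{genus_computable_5} using double coset representatives for $A$, $B$, $C$, verify every genus exceeds $1$ (the paper obtains $9526, 21, 11, 9, 2, 12, 4, 5$ for the eight classes), and conclude by Proposition \ref{En_finiteness_prop}. The approach and all supporting lemmas you cite match the paper's argument.
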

\begin{proof}
Computing representatives for the conjugacy classes of maximal subgroups of $\WW$, we obtain 8 subgroups which we denote by $M_1,\ldots, M_8$. The indices of these subgroups in $\WW$ are given, respectively, by
\[|\WW:M_i|:\;3125, 15, 15, 10, 6, 6, 5, 2.\]
Let $L_i$ be the fixed field of $M_i$. Fixing an index $i$, we may compute representatives for the double cosets in $A\backslash\WW/M_i$, $B\backslash\WW/M_i$, and $C\backslash\WW/M_i$. The genus of $L_i$ can then be obtained by applying Proposition \ref{genus_computable_5}. Carrying out these computations for $i=1,\ldots, 8$ we obtain, respectively, the genera
\[9526, 21, 11, 9, 2, 12, 4,5.\]
The result now follows from Proposition \ref{En_finiteness_prop}. The values of $g_{5,\infty}(M_i)$, $g_{5,5}(M_i)$, and $g_{5,1}(M_i)$ are shown in Table \ref{period_5_table}.
\end{proof}

\begin{table}[h!]
\centering
\begin{tabular}{|c|c|c|c|c|c|c|c|c|} 
 \hline
& $M_1$ & $M_2$ & $M_3$ & $M_4$ & $M_5$ & $M_6$ & $M_7$ & $M_8$\\
 \hline
$g_{5,\infty}$ & 1550 & 4 & 6 & 3 & 3 & 1 & 0 & 1\\
 \hline
 $g_{5,5}$ & 13750 & 66 & 44 & 33 & 11 & 33 & 0 & 11\\
 \hline
 $g_{5,1}$ & 10000 & 0 & 0 & 0 & 0 & 0 & 16 & 0\\
 \hline
\end{tabular}
\bigskip
\caption{Ramification data for the maximal subgroups of $\WW$.}
\label{period_5_table}
\end{table}

\subsection{The case $n=6$}\label{6case_section}

Our next objective is to show that the set $E_6$ is finite. The structure of the proof is similar to the case $n=5$, though the process of identifying the necessary inertia groups requires an additional step that was not present in that case.

The polynomial $\Phi_6$ has $D=2\nu(6)=54$ roots which can be partitioned into $r=D/6=9$ cycles. Hence, the graph $\Gamma$ consists of nine 6-cycles. The group $\WW$ is $(\Z/6\Z)\wr S_9$, so $|G|=6^99!=$ 3,656,994,324,480. The set of places of $K$ which ramify in $F$ is $\PP=\{p_{\infty}\}\cup \{p_b\;\vert\; b\in \cup_{d|6}R_{6,d}\}$. Using Lemma \ref{delta_roots_disjoint_lem} we find that
\begin{equation}\label{delta_degrees_6}
\#R_{6,6}=20,\;\#R_{6,3}=3,\;\#R_{6,2}=2,\;\#R_{6,1}=2.
\end{equation}

We define several cyclic subgroups of $\WW$. For $0\le j\le 4$, let
\[\gamma_j=\left(\prod_{i=1}^{9-2j}\rho_i^3\right)\left(\prod_{i=0}^{j-1}\tau_{8-2i,9-2i}\right)\;\;\text{and}\;\;A_j=\langle\gamma_j\rangle.\]
In addition, let $B_0=\langle\rho_1^3\rho_2^3\rangle,\;B_1= \langle\tau_{1,2}\rangle,\;C= \langle\rho_1^3\rangle,\;D= \langle\rho_1^2\rangle$, $E= \langle\rho_1\rangle$.

\begin{lem}\label{inertia_in_W_lem_6}
Up to conjugation, the groups $A_j$ are the only subgroups of $\WW$ generated by an element with cycle type $(2,27)$; $B_0$ and $B_1$ are the only subgroups generated by an element with cycle type $(2,6)$; and $C, D, E$ are uniquely determined by the cycle types $(2,3)$, $(3,2)$, and $(6,1)$, respectively.
\end{lem}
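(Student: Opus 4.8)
The statement to be proved is a direct analog for $n=6$ of Lemma~\ref{inertia_in_W_lem_5}, so the plan is to follow the same strategy: for each of the cycle types $(2,27)$, $(2,6)$, $(2,3)$, $(3,2)$, and $(6,1)$ that can arise as the cycle type of an inertia generator (cf. Proposition~\ref{dynatomic_inertia_prop} with $n=6$, $D=54$, $r=9$), classify all cyclic subgroups of $\WW$ generated by an element of that type up to conjugacy, using the structural results of Proposition~\ref{conjugacy_centralizer_megaprop} and Lemma~\ref{rotation_conjugacy}. The first step is to identify which subcase of Proposition~\ref{conjugacy_centralizer_megaprop} applies to each cycle type. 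Type $(2,27)$ falls under case 1 (an element moving all $D=54$ points as a product of transpositions); since $n=6$ is even, \emph{both} subcases 1(a) and 1(b) are possible, which is precisely why one obtains a family $A_0,\ldots,A_4$ rather than a single group. Type $(2,6)$ falls under case 2, with subcases 2(a) and 2(b) giving $B_0$ and $B_1$ respectively. Types $(2,3)$, $(3,2)$, and $(6,1)$ each fall under case 3 (an element moving exactly $n=6$ points), forcing the generator to be $\rho_i^s$ for a single index $i$ and suitable $s$.

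For the $(2,27)$ case: an element $w$ of this type moves all $54$ points and $w^2=1$, so by case 1 of Proposition~\ref{conjugacy_centralizer_megaprop} it is either of the form in 1(a) (no cycle $C_i$ fixed, so $w$ conjugate to $(1,2)(3,4)\cdots(8,9)$ — but wait, $r=9$ is odd, so 1(a) cannot hold as stated since it requires $r$ even) or of the form in 1(b) with some $\ell$ satisfying $0<\ell\le 9$ and $9-\ell$ even, i.e. $\ell\in\{1,3,5,7,9\}$. In each such case $w$ is conjugate to $(\rho_1\cdots\rho_\ell)^{3}\,\epsilon$ with $\epsilon=(\ell+1,\ell+2)\cdots(8,9)$. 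Setting $j=(9-\ell)/2\in\{0,1,2,3,4\}$, this is exactly the generator $\gamma_j$ of $A_j$ (one checks the products match up after relabeling indices and using that $\rho_i^3$ has order $2$). So every cyclic subgroup generated by a $(2,27)$-element is conjugate to exactly one $A_j$; conversely each $A_j$ is of this type, and the five are pairwise non-conjugate because their generators have distinct types in the sense of Proposition~\ref{wreath_conjugacy_centralizer} (the parameter $\ell$, equivalently the count $t_{\bar 3,1}=\ell$ of fixed cycles carrying the half-rotation, is a conjugacy invariant). For $(2,6)$: case 2(a) gives $w=(\rho_i\rho_j)^{3}$ (conjugate to $\rho_1^3\rho_2^3$, generating a conjugate of $B_0$) and case 2(b) gives $w=\rho_i^{-s}\tau_{i,j}\rho_i^s$ (conjugate to $\tau_{1,2}$, generating a conjugate of $B_1$); these two are non-conjugate since one generator has two $2$-cycles in $S_r$ and the other has one, i.e. distinct types. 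For $(2,3),(3,2),(6,1)$: case 3 gives $w=\rho_i^s$ with $0<s<6$; the cycle type of $\rho_i^s$ as a permutation of $X$ is determined by $\gcd(s,6)$, namely $s\in\{3\}$ gives $(2,3)$, $s\in\{2,4\}$ gives $(3,2)$, and $s\in\{1,5\}$ gives $(6,1)$, so $\langle w\rangle=\langle\rho_i^s\rangle$ equals $\langle\rho_1^3\rangle=C$, $\langle\rho_1^2\rangle=D$, or $\langle\rho_1\rangle=E$ up to conjugacy by Lemma~\ref{rotation_conjugacy}.

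The main obstacle I expect is the bookkeeping in the $(2,27)$ case — correctly matching the abstract normal form $(\rho_1\cdots\rho_\ell)^{n/2}\epsilon$ from Proposition~\ref{conjugacy_centralizer_megaprop}(1b) with the concretely-defined generators $\gamma_j$, and verifying that as $j$ ranges over $\{0,\ldots,4\}$ one gets exactly the cases $\ell=9,7,5,3,1$ and nothing is double-counted or omitted. This is purely a matter of unwinding the definition of $\gamma_j$ (the product $\prod_{i=1}^{9-2j}\rho_i^3$ supplies the half-rotations on the first $\ell=9-2j$ cycles, and $\prod_{i=0}^{j-1}\tau_{8-2i,9-2i}$ supplies the $j$ transpositions $(8,9),(6,7),\ldots$ swapping the remaining $2j$ cycles in pairs). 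Everything else is routine application of the already-established machinery: one simply reads off the type $(t_{ak})$ of each candidate generator, invokes Proposition~\ref{wreath_conjugacy_centralizer}(1) to conclude conjugacy, and checks the types are pairwise distinct across the different named subgroups to conclude no further collapsing occurs.
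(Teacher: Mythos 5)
Your proposal is correct and follows essentially the same route as the paper: each cycle type is matched to the relevant case of Proposition \ref{conjugacy_centralizer_megaprop} (case 1(b) for $(2,27)$ since $r=9$ is odd, cases 2(a)/2(b) for $(2,6)$, case 3 for the remaining types), with Lemma \ref{rotation_conjugacy} and the type invariant of Proposition \ref{wreath_conjugacy_centralizer} finishing the identification. Your added check that the $A_j$ are pairwise non-conjugate (via the invariant $t_{\bar 3,1}=\ell$) is a harmless refinement that the paper leaves implicit.
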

\begin{proof}
Suppose that $\tilde A=\langle w\rangle$, where $w\in\WW$ has cycle type $(2,27)$. We are then in the context of case 1 of Proposition \ref{conjugacy_centralizer_megaprop}. Moreover, since $r=9$ is odd, case 1(b) must hold. Thus, there exists $0<\ell\le9$ such that $9-\ell$ is even and $w$ is conjugate to $v=(\rho_1\cdots\rho_{\ell})^{3}(\tau_{\ell+1,\ell+2})\cdots\tau_{8,9}$. Writing $9-\ell=2j$ with $0\le j\le 4$, we have $v=\gamma_j$. Hence, $\tilde A$ is conjugate to $A_j$.

Suppose now that $\tilde B=\langle w\rangle$, where $w\in\WW$ has cycle type $(2,6)$. We are then in the context of case 2 of Proposition \ref{conjugacy_centralizer_megaprop}. In case 2(a) of the proposition, $w=(\rho_i\rho_j)^3$ for some indices $i\ne j$. By Lemma \ref{rotation_conjugacy}, this implies that $w$ is conjugate to $(\rho_1\rho_2)^3$, and therefore $\tilde B$ is conjugate to $B_0$. In case 2(b) of the proposition, $w$ is conjugate to $\tau_{1,2}$ and $\tilde B$ is conjugate to $B_1$.

We now prove the uniqueness of the group $C$ and omit the proofs for $D$ and $E$, which are similar. Suppose that $\tilde C=\langle w\rangle$, where $w\in\WW$ has cycle type $(2,3)$. By case (3) of Proposition \ref{conjugacy_centralizer_megaprop}, we have $w=\rho_i^s$ with $1\le i\le 9$ and $0<s<6$. In order for $w$ to have cycle type $(2,3)$ we must have $s=3$; thus, by Lemma \ref{rotation_conjugacy}, $w$ is conjugate to $\rho_1^3$ and $\tilde C$ is conjugate to $C$.
\end{proof}

Before continuing with the main discussion of this section, we prove a couple of auxiliary results. Returning to the general case of an arbitrary positive integer $n$, let $\theta$ be a root of $\Phi_n$ such that $F=K(\theta)$. Recall from \S\ref{dynatomic_section} that $F$ has an automorphism given by $\theta\mapsto\phi(\theta)$, and that $F_0$ denotes the fixed field of this automorphism. 

\begin{lem}\label{F0_primitive_element}
Let $\tau=\theta+\phi(\theta)+\cdots+\phi^{n-1}(\theta)$. Then $F_0=K(\tau)$.
\end{lem}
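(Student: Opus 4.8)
The plan is to show $K(\tau) = F_0$ by proving the two inclusions. Since $\tau$ is visibly fixed by the automorphism $\theta \mapsto \phi(\theta)$ of $F$, we certainly have $K(\tau) \subseteq F_0$. For the reverse inclusion it suffices to show $[F:K(\tau)] \leq n$, since $[F:F_0] = n$ (the automorphism $\theta \mapsto \phi(\theta)$ has order $n$, as $\phi^n(\theta) = \theta$ and no smaller power fixes $\theta$ because $\theta$ has exact period $n$). Equivalently, it suffices to exhibit a polynomial of degree $n$ over $K(\tau)$ having $\theta$ as a root.

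The natural candidate is built from the orbit of $\theta$ under $\phi$. First I would observe that the elementary symmetric functions $s_1, \ldots, s_n$ of the $n$ elements $\theta, \phi(\theta), \ldots, \phi^{n-1}(\theta)$ all lie in $F_0$, since they are fixed by $\theta \mapsto \phi(\theta)$; in particular $s_1 = \tau$. The monic polynomial $g(y) = \prod_{j=0}^{n-1}(y - \phi^j(\theta)) \in F_0[y]$ has $\theta$ as a root and degree $n$, so $F = K(\theta)$ already has degree at most $n$ over $F_0$, consistent with the above. The key point is then to show that the coefficients of $g$ — i.e. the $s_i$ — actually lie in $K(\tau)$, not merely in $F_0$. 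For this I would use that $\phi^j(\theta) = \theta^2 + t$ composed $j$ times is a \emph{polynomial} in $\theta$ with coefficients in $K$; hence each power sum $p_k = \sum_{j=0}^{n-1} \phi^j(\theta)^k$ is a polynomial in $\theta$ over $K$ that is fixed by $\theta \mapsto \phi(\theta)$, so $p_k \in F_0$. Via Newton's identities, the $s_i$ are polynomials in the $p_k$, so it is enough to show each $p_k \in K(\tau)$.

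To pin down $K(\tau)$ I would argue that $\tau$ generates $F_0$ over $K$ for degree reasons: $[F_0 : K] = [F:K]/n = \deg\Phi_n / n = r$, so I need $[K(\tau):K] \geq r$, i.e. $\tau$ has at least $r$ distinct conjugates over $K$. The conjugates of $\theta$ under $G = \Gal(N/K)$ are the $D = rn$ roots of $\Phi_n$, which by \S\ref{galois_action_section} fall into $r$ orbits of size $n$ under $\phi$ (the $r$ cycles $C_1, \ldots, C_r$ of the graph $\Gamma$); the element $\tau$ is the sum over one such orbit, and applying elements of $G$ permutes these $r$ orbit-sums. So the number of conjugates of $\tau$ equals the number of distinct orbit-sums in its $G$-orbit. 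The main obstacle is ruling out collisions: I must show that two distinct $\phi$-orbits cannot have the same sum of elements, and more generally that the $G$-orbit of $\tau$ has exactly $r$ elements. This is where I expect to use either Bousch's explicit determination of $G \cong (\Z/n\Z)\wr S_r$ together with the fact that the base group $(\Z/n\Z)^r$ acts on $C_i$ by rotations — so an orbit-sum $\sum_{\alpha \in C_i}\alpha$ is invariant under the rotation of $C_i$ but a transposition $\tau_{i,j}$ moves it to $\sum_{\alpha \in C_j}\alpha$, giving a transitive $S_r$-action on the $r$ sums with trivial kernel — or alternatively an algebraic independence / specialization argument showing the $r$ orbit-sums are distinct as elements of $\bar\Q(t)$. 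Granting that the $G$-orbit of $\tau$ has size exactly $r$, we get $[K(\tau):K] = r = [F_0:K]$, hence $K(\tau) = F_0$, which completes the proof.
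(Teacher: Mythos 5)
Your overall strategy is the same as the paper's: both arguments reduce the lemma to the two facts that $K(\tau)\subseteq F_0$ (clear, since $\tau$ is fixed by $\theta\mapsto\phi(\theta)$) and that $[K(\tau):K]=r=[F_0:K]$. The detour in your second paragraph through $g(y)=\prod_j\bigl(y-\phi^j(\theta)\bigr)$, power sums, and Newton's identities is harmless but superfluous: once you compare degrees, the inclusion $K(\tau)\subseteq F_0$ together with $[K(\tau):K]=[F_0:K]$ finishes the proof, and you in effect abandon that paragraph.

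The genuine gap is the one you flag yourself: proving that the $r$ cycle-sums are pairwise distinct elements of $N$, so that $\tau$ has exactly $r$ conjugates over $K$. Neither of your proposed fixes closes it. The wreath-product argument only shows that $G$ acts on the \emph{set} of $r$ cycles as the full symmetric group $S_r$ with point stabilizer $S_{r-1}$; it says nothing about injectivity of the map sending a cycle to the sum of its elements in $N$, which is exactly where a collision could occur. The most the group structure gives is that $\mathrm{Stab}_G(\tau)$ contains the setwise stabilizer $H_0$ of the cycle of $\theta$, and since $S_{r-1}$ is maximal in $S_r$ this stabilizer is either $H_0$ or all of $G$; so either all $r$ sums are distinct or they are all equal, in which case $\tau=\frac{1}{r}\sum(\text{roots of }\Phi_n)$ lies in $K$. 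Ruling out that degenerate case requires an input about $\Phi_n$ beyond the group structure, and your alternative suggestion of an ``algebraic independence / specialization argument'' is not supplied. The paper closes exactly this step by citation: Morton proves that the monic degree-$r$ polynomial $P\in K[x]$ whose roots are the traces of the $r$ cycles is irreducible over $K$ (\cite[Cor.~3, p.~335]{morton_dynatomic_curves}), which immediately gives $[K(\tau):K]=r$. Without that result, or an equivalent computation showing the traces are distinct, your proof is incomplete at its crucial step.
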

\begin{proof}
Following Morton \cite{morton_dynatomic_curves}, we define the \emph{trace} of a cycle in the graph $\Gamma$ to be the sum of the elements in the cycle. Note that $\tau$ is the trace of the cycle containing $\theta$. Let $P\in K[x]$ be the monic polynomial of degree $r$ whose roots are the traces of all the cycles in $\Gamma$. By \cite[Cor. 3, p. 335]{morton_dynatomic_curves}, $P$ is irreducible; hence $P$ is the minimal polynomial of $\tau$, and therefore $[K(\tau):K]=r$. Clearly $\tau$ is fixed by $\phi$, so $K(\tau)\subseteq F_0$. Now, since $[F:K]=D$ and $[F:F_0]=n$, then $[F_0:K]=D/n=r=[K(\tau):K]$. Thus $F_0=K(\tau)$.
\end{proof}

We can now describe the subgroup of $G$ corresponding to $F_0$.

\begin{lem}\label{F0_group_lem}
Let $\O=\{\theta,\phi(\theta),\ldots, \phi^{n-1}(\theta)\}$ and let $H_0$ be the setwise stabilizer of $\O$ in $G$. Then $F_0$ is the fixed field of $H_0$.
\end{lem}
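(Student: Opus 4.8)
The plan is to identify $H_0$ with $\Gal(N/F_0)$ under the Galois correspondence for the extension $N/K$, by establishing one containment directly and then matching indices in $G$.

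First I would show that $H_0\subseteq\Gal(N/F_0)$. By Lemma \ref{F0_primitive_element} we have $F_0=K(\tau)$ with $\tau=\theta+\phi(\theta)+\cdots+\phi^{n-1}(\theta)$, which is exactly the sum of the elements of $\O$. Any $g\in H_0$ permutes the finite set $\O$, hence fixes the sum $\tau$; therefore $g$ fixes $K(\tau)=F_0$ pointwise, so $g\in\Gal(N/F_0)$. This gives $H_0\le\Gal(N/F_0)$.

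Next I would compute $|G:H_0|$. Since $\Phi_n$ is irreducible over $K$, the group $G$ acts transitively on the set $R$ of roots of $\Phi_n$. Because $\phi$ is a polynomial map over $K$, it commutes with every element of $G$ (equivalently, $G$ lies in the centralizer of $\phi$, as in \S\ref{galois_action_section}); consequently $G$ carries $\phi$-orbits to $\phi$-orbits, and transitivity on $R$ forces $G$ to act transitively on the set of $\phi$-orbits. There are $r$ such orbits (see \S\ref{graph_group_section}), and $H_0$ is by definition the stabilizer of one of them, namely $\O$; hence $|G:H_0|=r$ by the orbit–stabilizer relation. On the other hand, the proof of Lemma \ref{F0_primitive_element} shows $[F_0:K]=r$, so $|G:\Gal(N/F_0)|=[F_0:K]=r=|G:H_0|$. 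Combined with the containment from the previous paragraph, this yields $H_0=\Gal(N/F_0)$, and therefore the fixed field of $H_0$ equals $F_0$.

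The argument is essentially routine once the pieces are assembled; the only point needing a little care is the transitivity of $G$ on the set of $\phi$-orbits, which relies on the fact that $\phi$ commutes with $G$ (Bousch's description of $G$ as the centralizer of $\phi$), but this has already been established in \S\ref{galois_action_section}.
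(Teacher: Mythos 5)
Your proof is correct, and it takes a somewhat different route from the paper's in the step that pins down the index. Both arguments begin the same way: an element of $H_0$ permutes $\O$, hence fixes the trace $\tau$, so the fixed field $L$ of $H_0$ satisfies $F_0=K(\tau)\subseteq L$. Where you diverge is in the counting. You compute $|G:H_0|=r$ directly by observing that $G$ (being contained in the centralizer of $\phi$ and acting transitively on $R$ by irreducibility of $\Phi_n$ over $\bar\Q(t)$) acts transitively on the $r$ $\phi$-orbits, with $H_0$ the stabilizer of one of them; comparing with $[F_0:K]=r$ from Lemma \ref{F0_primitive_element} then forces $H_0=\Gal(N/F_0)$. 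The paper instead sandwiches $F_0\subseteq L\subseteq F$, writes $H_0=UV$ where $U$ is the pointwise stabilizer of $\O$ (identified with $\Gal(N/F)$ using the centrality of $\phi$) and $V$ is the group of the $n$ rotations of the cycle containing $\theta$, and computes $[F:L]=|UV|/|U|=|V|=n=[F:F_0]$. Your version buys a cleaner global statement (one containment plus one index computation, no need for the product decomposition $H_0=UV$ or the citation for it), at the cost of invoking transitivity of $G$ on the set of cycles; the paper's version works locally around the single cycle $\O$ and never uses that transitivity. Both rest on the same two external inputs — Bousch's description of $G$ as the centralizer of $\phi$ and the degree count $[F_0:K]=r$ — so either is a legitimate proof in the context of the paper.
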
 

\begin{proof}
Let $U$ and $V$ be the subgroups of $G$ defined by 
\begin{align*}
U&=\{\sigma\in G\;\vert\;\sigma(x)=x\;\text{ for every}\;x\in \O\},\\
V&=\{\sigma\in G\;\vert\;\sigma(x)=x\;\text{ for every }\;x\in R\setminus\O\}.
\end{align*}
A simple argument shows that $H_0=UV$; see Example 2 in \cite[p. 172]{dummit-foote}.

The fact that $\phi$ is in the center of $G$ implies that $U$ is equal to the stabilizer of $\theta$ in $G$; thus $U=\Gal(N/F)$. It follows that $F$ is the fixed field of $U$. Let $L$ be the fixed field of $H_0$. Since $U\le H_0$, then $L\subseteq F$. Defining $\tau$ as in Lemma \ref{F0_primitive_element}, it is clear that $\tau$ is fixed by every element of $H_0$; hence $F_0=K(\tau)\subseteq L$. We have thus shown that $F_0\subseteq L\subseteq F$. To complete the proof we will show that $[F:L]=[F:F_0]$.

Identifying $G$ with $\Aut(\Gamma)$ we see that $V$ consists of the elements of $G$ that act trivially on every cycle of $\Gamma$ except possibly on the cycle containing $\theta$. Thus the elements of $V$ are the $n$ rotations of the latter cycle, so $|V|=n$. By Galois theory we have $[F:L]=|UV|/|U|=|V|$, where the second equality uses the fact that $U\cap V=\{1\}$. We conclude that $[F:L]=n=[F:F_0]$.
\end{proof}

We return now to the case $n=6$.

\begin{lem}\label{inertia_identification_lem_6}\mbox{}
\begin{enumerate}
\item There exists $\P\in\PP_N(p_{\infty})$ such that $I_{\P|p_{\infty}}=A_4$.
\item For every $b\in R_{6,6}$ there exists $\P\in\PP_N(p_b)$ such that $I_{\P|p_b}=B_1$.
\item For every $b\in R_{6,3}$ there exists $\P\in\PP_N(p_b)$ such that $I_{\P|p_b}=C$.
\item For every $b\in R_{6,2}$ there exists $\P\in\PP_N(p_b)$ such that $I_{\P|p_b}=D$.
\item For every $b\in R_{6,1}$ there exists $\P\in\PP_N(p_b)$ such that $I_{\P|p_b}=E$.
\end{enumerate}
\end{lem}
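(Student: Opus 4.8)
The strategy is to combine the cycle-type data of Proposition~\ref{dynatomic_inertia_prop} with the classification in Lemma~\ref{inertia_in_W_lem_6}, and, in the two cases where this is not decisive, to bring in the ramification data for $F_0$ from Proposition~\ref{F0_ramification_prop}. Fix $p\in\PP$ and a place $\P$ of $N$ over $p$, and let $I=I_{\P|p}=\langle\gamma\rangle$; by Proposition~\ref{dynatomic_inertia_prop} the cycle type of $\gamma$ is $(2,27)$ if $p=p_{\infty}$, $(2,6)$ if $p=p_b$ with $b\in R_{6,6}$, and $(2,3)$, $(3,2)$, $(6,1)$ if $p=p_b$ with $b\in R_{6,3}$, $R_{6,2}$, $R_{6,1}$ respectively. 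For the last three cases Lemma~\ref{inertia_in_W_lem_6} shows that the cycle type already determines the conjugacy class of $\langle\gamma\rangle$, so $I$ is conjugate in $\WW$ to $C$, $D$, $E$ respectively; replacing $\P$ by a suitable conjugate place, exactly as in the proof of Lemma~\ref{inertia_identification_lem_5}, gives the desired equality and proves parts (3)--(5).

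For $p=p_{\infty}$ and for $p=p_b$ with $b\in R_{6,6}$ the classification is not decisive: Lemma~\ref{inertia_in_W_lem_6} only tells us that $I$ is conjugate to one of $A_0,\dots,A_4$ in the first case and to $B_0$ or $B_1$ in the second. To single out the correct one I would compute, in two ways, the ramification invariant $S(p)=\sum_{\q\in\PP_{F_0}(p)}(e_{\q|p}-1)$ for the subfield $F_0$. By Lemma~\ref{F0_group_lem}, $F_0$ is the fixed field of the setwise stabilizer $H_0$ of a cycle of $\Gamma$, and since $\WW$ acts transitively on the set $\{C_1,\dots,C_9\}$ of cycles, orbit--stabilizer yields a $\WW$-equivariant identification $\WW/H_0\cong\{C_1,\dots,C_9\}$. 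Because the residue field of $p$ is algebraically closed we have $D_{\P|p}=I$, so Lemma~\ref{double_coset_bijection_lem} applied with $H=H_0$ identifies $\PP_{F_0}(p)$ with the set of orbits of $\gamma$ on $\{C_1,\dots,C_9\}$, the ramification index attached to an orbit $O$ being $|O|$ by the formula \eqref{double_coset_ef}. As these orbits partition a nine-element set, this gives the clean dictionary
\[S(p)\;=\;\sum_{O}\bigl(|O|-1\bigr)\;=\;9-\#\{\text{orbits of }\gamma\text{ on the cycles of }\Gamma\}.\]

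It then remains to evaluate both sides. On the arithmetic side, Proposition~\ref{F0_ramification_prop}(a) together with a direct evaluation of the formula for $e_6$, which gives $e_6=\tfrac{1}{12}(54+6)=5$, yields $S(p_{\infty})=r-e_6=4$; and Proposition~\ref{F0_ramification_prop}(b) gives $S(p_b)=1$ for $b\in R_{6,6}$. On the group side, the generator $\gamma_j$ of $A_j$ acts on $\{C_1,\dots,C_9\}$ as $j$ disjoint transpositions together with $9-2j$ fixed cycles --- the factors $\rho_i^3$ move points within a single cycle but fix every cycle setwise, while the $\tau$'s provide $j$ transpositions of cycles --- so it has $9-j$ orbits and $S=j$; similarly $\rho_1^3\rho_2^3$ fixes every cycle, so $B_0$ gives $S=0$, and $\tau_{1,2}$ is a single transposition of cycles, so $B_1$ gives $S=1$. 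Matching values, $S(p_{\infty})=4$ forces $j=4$ and $S(p_b)=1$ forces the class of $B_1$; replacing $\P$ by a conjugate place then yields the equalities in (1) and (2). (For parts (3)--(5) the same count gives $S(p_b)=0$, consistent with Proposition~\ref{F0_ramification_prop}(c).)

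The main obstacle is the conceptual step right after the cycle-type analysis: recognizing that the inertia at $p_{\infty}$ and at the places over $R_{6,6}$ is genuinely not pinned down by cycle type alone, and that $F_0$ --- via Lemma~\ref{F0_group_lem} and the dictionary $S(p)=9-\#\{\text{orbits on cycles}\}$ --- furnishes exactly the missing invariant. After that, the remaining work (evaluating $e_6$ and counting orbits of the explicit generators) is routine.
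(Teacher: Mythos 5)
Your proof is correct and follows essentially the same route as the paper: cycle types plus Lemma \ref{inertia_in_W_lem_6} settle parts (3)--(5), and the invariant $S(p)$ from Proposition \ref{F0_ramification_prop}, evaluated group-theoretically via $H_0$ and Lemma \ref{double_coset_bijection_lem}, distinguishes $A_4$ among the $A_j$ and $B_1$ from $B_0$. The only difference is that where the paper evaluates $S(p)$ for each candidate inertia group by a \textsc{Magma} double-coset computation, you replace this with the transparent hand count $S(p)=9-\#\{\text{orbits of }\gamma\text{ on the cycles of }\Gamma\}$, which yields the same values $S=j$ for $A_j$ and $S=0,1$ for $B_0,B_1$.
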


\begin{proof} 
Let $p=p_{\infty}$, $\P\in\PP_N(p)$, and $I=I_{\P|p}$. By Proposition \ref{dynatomic_inertia_prop}, $I$ has a generator with cycle type $(2,27)$.  By Lemma \ref{inertia_in_W_lem_6}, $I$ must be conjugate to one of the groups $A_j$. Replacing $\P$ by a conjugate ideal if necessary, we then have $I=A_j$ for some $j$. We claim that $I=A_4$.

To prove this we will use the number $S(p)$ defined in Proposition \ref{F0_ramification_prop}. By part (a) of the proposition, $S(p)=9-e_6=4$. We can calculate $S(p)$ in a different way by using the inertia group $I$ as follows. Let $H_0$ be the subgroup of $\WW$ defined in Lemma \ref{F0_group_lem}. Applying Lemma \ref{double_coset_ramification_lem} we see that
\[S(p)=\sum_{i=1}^m(|I^{\sigma_i}:I^{\sigma_i}\cap H_0|-1),\]
where $\sigma_1,\ldots, \sigma_m$ are double coset representatives for $I\backslash\WW/H_0$. Assuming that $I=A_0,A_1,A_2,A_3, A_4$, respectively, we compute representatives $\sigma_i$ and use the above formula to obtain $S(p)=0,1,2,3,4$. However, we know that $S(p)=4$, so necessarily $I=A_4$, as claimed. This proves (1). For the purposes of this computation, we identify $\WW$ with the group $\ZZ\le S_{54}$, so that $H_0$ is identified with the setwise stabilizer of the set $\{1,\ldots, 6\}$ in $\ZZ$. The code used for these computations is available in \cite{krumm_finiteness_code}.

Let $b\in R_{6,6}$, $p=p_b$, $\P\in\PP_N(p)$, and $I=I_{\P|p}$. By Proposition \ref{dynatomic_inertia_prop}, $I$ has a generator with cycle type $(2,6)$. By Lemma \ref{inertia_in_W_lem_6}, $I$ must be conjugate to either $B_0$ or $B_1$. Replacing $\P$ by a conjugate ideal if necessary, we then have $I=B_0$ or $B_1$. We know that $S(p)=1$ by part (b) of Proposition \ref{F0_ramification_prop}. Now, assuming that $I=B_0, B_1$, respectively, the above displayed formula yields $S(p)=0,1$; hence $I=B_1$. This proves (2).

Statements (3)-(5) follow easily from Proposition \ref{dynatomic_inertia_prop} and Lemma \ref{inertia_in_W_lem_6}.
\end{proof}

\begin{prop}\label{genus_computable_6} Let $H$ be a subgroup of $\WW$ with fixed field $L$. For every group $I\in\{A_4, B_1, C, D, E\}$ let
\[q_H(I)=\sum_{i=1}^m(|I^{\sigma_i}:I^{\sigma_i}\cap H|-1),\]
where $\sigma_1,\ldots, \sigma_m$ are representatives of all the double cosets in $I\backslash\WW/H$. Then the genus of $L$ is given by
\[g(L)=1 - |\WW:H| + \frac{1}{2}\left(q_H(A_4)+20q_H(B_1)+3q_H(C)+2q_H(D)+2q_H(E)\right).\]
\end{prop}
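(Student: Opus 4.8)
The plan is to follow, almost verbatim, the argument used to prove Proposition \ref{genus_computable_5}, substituting the ramification data specific to $n=6$. First I would invoke the general genus formula \eqref{genus_formula}, which in the present setting reads
\[g(L)=1-|\WW:H|+\frac{1}{2}\left(g_{6,\infty}(H)+g_{6,6}(H)+g_{6,3}(H)+g_{6,2}(H)+g_{6,1}(H)\right),\]
so that the whole task reduces to computing, for each place $p\in\PP$, the local contribution $\sum_{\q\in\PP_L(p)}(e_{\q|p}-1)$.

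Next I would fix $p\in\PP$ and a place $\P\in\PP_N(p)$, and use Lemma \ref{inertia_identification_lem_6} to replace $\P$ by a suitable conjugate so that the inertia group $I_{\P|p}$ equals $A_4$ when $p=p_\infty$, $B_1$ when $p=p_b$ with $b\in R_{6,6}$, $C$ when $b\in R_{6,3}$, $D$ when $b\in R_{6,2}$, and $E$ when $b\in R_{6,1}$. Since $K=\bar\Q(t)$ has residue degrees all equal to $1$, Lemma \ref{double_coset_ramification_lem} applies and identifies the multiset $\{e_{\q|p}:\q\in\PP_L(p)\}$ with $\{|I^{\sigma_i}:I^{\sigma_i}\cap H|:1\le i\le m\}$, where $\sigma_1,\dots,\sigma_m$ are double coset representatives for $I\backslash\WW/H$. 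Summing the quantities $e_{\q|p}-1$ therefore gives exactly $q_H(I)$ for the relevant $I$.

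The only point that needs an explicit remark is that this local contribution depends only on the divisor $d$ and not on the particular $b\in R_{6,d}$; this is immediate, since Lemma \ref{inertia_identification_lem_6} produces the same subgroup $I$ — hence the same value $q_H(I)$ — for every $b\in R_{6,d}$. Consequently $g_{6,\infty}(H)=q_H(A_4)$, $g_{6,6}(H)=\#R_{6,6}\cdot q_H(B_1)$, $g_{6,3}(H)=\#R_{6,3}\cdot q_H(C)$, $g_{6,2}(H)=\#R_{6,2}\cdot q_H(D)$, and $g_{6,1}(H)=\#R_{6,1}\cdot q_H(E)$. Inserting the degrees $\#R_{6,6}=20$, $\#R_{6,3}=3$, $\#R_{6,2}=2$, $\#R_{6,1}=2$ from \eqref{delta_degrees_6} into the genus formula yields precisely the stated expression. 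There is no genuine obstacle here: once Lemma \ref{inertia_identification_lem_6} is available, the argument is pure bookkeeping, and the only thing requiring care is correctly matching each place type with its inertia group among $A_4, B_1, C, D, E$ and recording the corresponding multiplicity $\#R_{6,d}$.
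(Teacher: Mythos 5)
Your proposal is correct and follows essentially the same route as the paper: invoke the genus formula \eqref{genus_formula}, use Lemma \ref{inertia_identification_lem_6} to pin down the inertia group over each ramified place as one of $A_4, B_1, C, D, E$, apply Lemma \ref{double_coset_ramification_lem} to convert each local contribution into $q_H(I)$, and multiply by the counts from \eqref{delta_degrees_6}. Nothing is missing.
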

\begin{proof}

Let $p=p_{\infty}$. By Lemma \ref{inertia_identification_lem_6}, there exists $\P\in\PP_N(p)$ such that $I_{\P|p}=A_4$. Using Lemma \ref{double_coset_ramification_lem} we see that $g_{6,\infty}(H)=q_H(A_4)$. Now let $b\in R_{6,6}$, $p=p_b$, and let $\P\in\PP_N(p)$ satisfy $I_{\P|p}=B_1$. By Lemma \ref{double_coset_ramification_lem},
\[q_H(B_1)=\sum_{\q\in\PP_L(p)}(e_{\q|p}-1).\]
Since this holds for every $b\in R_{6,6}$, then \eqref{delta_degrees_6} yields $g_{6,6}(H)=20 q_H(B_1)$. By a similar argument we show that
\[g_{6,3}(H)=3q_H(C),\; g_{6,2}(H)=2q_H(D),\text{ and } g_{6,1}(H)=2q_H(E).\]
The stated formula for the genus of $L$ is now a consequence of \eqref{hurwitz_general}.
\end{proof}

We can now prove a second part of Theorem \ref{finiteness_theorem_intro}.
\begin{thm}
The set $E_6$ is finite.
\end{thm}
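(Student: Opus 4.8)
The plan is to mirror the proof of Theorem \ref{E5_finiteness_thm} exactly, now using Proposition \ref{genus_computable_6} in place of Proposition \ref{genus_computable_5} and Proposition \ref{En_finiteness_prop} as the final step. First I would invoke \textsc{Magma} to compute representatives $M_1,\ldots,M_s$ of the conjugacy classes of maximal subgroups of $\WW=(\Z/6\Z)\wr S_9$ via \texttt{MaximalSubgroups}, recording the indices $|\WW:M_i|$. For each $i$, I would then compute double coset representatives for $I\backslash\WW/M_i$ for each of the five inertia groups $I\in\{A_4,B_1,C,D,E\}$ using \texttt{DoubleCosetRepresentatives}, form the conjugates $I^{\sigma}$ and the intersections $I^{\sigma}\cap M_i$ using \texttt{meet}, and thereby evaluate $q_{M_i}(A_4)$, $q_{M_i}(B_1)$, $q_{M_i}(C)$, $q_{M_i}(D)$, $q_{M_i}(E)$. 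Plugging these into the formula of Proposition \ref{genus_computable_6} gives the genus $g(L_i)$ of the fixed field $L_i$ of $M_i$.

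Once all these genera are in hand, the proof concludes by checking that every $g(L_i)$ exceeds $1$, and then citing Proposition \ref{En_finiteness_prop} to deduce that $E_6$ is finite. I would present the resulting genera in a table analogous to Table \ref{period_5_table}, listing $g_{6,\infty}(M_i)=q_{M_i}(A_4)$, $g_{6,6}(M_i)=20\,q_{M_i}(B_1)$, $g_{6,3}(M_i)=3\,q_{M_i}(C)$, $g_{6,2}(M_i)=2\,q_{M_i}(D)$, and $g_{6,1}(M_i)=2\,q_{M_i}(E)$ for each $i$, together with $g(L_i)$. The full code would again be referenced in \cite{krumm_finiteness_code}.

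The main obstacle is computational rather than conceptual: $\WW$ has order $6^9 \cdot 9! \approx 3.66\times 10^{12}$, so enumerating its maximal subgroups and — more expensively — computing double coset representatives $I\backslash\WW/M_i$ and subgroup intersections for every maximal $M_i$ is a nontrivial load on the backtrack algorithms of \cite{cannon-holt,leon}. The key insight that makes this feasible is that the relevant computations only ever involve one fixed small inertia group $I$ (cyclic, of order $2$, $3$, or $6$) paired against a maximal subgroup, so the double coset spaces $I\backslash\WW/M_i$ are not too large; this is precisely why the method works for $n=6$ but, as remarked in \S\ref{bounds_section}, breaks down for substantially larger $n$. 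One should also double-check that the five inertia groups identified in Lemma \ref{inertia_identification_lem_6} are realized as honest subgroups of the concrete permutation group $\ZZ\le S_{54}$ used in the code, as was done there via the $S(p)$ computation, so that the conjugates $I^{\sigma}$ are computed with respect to the correct embedding.

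\begin{proof}
We argue exactly as in the proof of Theorem \ref{E5_finiteness_thm}. Computing representatives for the conjugacy classes of maximal subgroups of $\WW=(\Z/6\Z)\wr S_9$, we obtain a list $M_1,\ldots, M_s$. For each index $i$, letting $L_i$ be the fixed field of $M_i$, we compute representatives for the double cosets in $I\backslash\WW/M_i$ for every $I\in\{A_4, B_1, C, D, E\}$, evaluate the quantities $q_{M_i}(I)$ defined in Proposition \ref{genus_computable_6}, and apply that proposition to obtain the genus $g(L_i)$. Carrying out these computations (see \cite{krumm_finiteness_code} for the code), we find that $g(L_i)>1$ for every $i$. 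The result now follows from Proposition \ref{En_finiteness_prop}.
\end{proof}
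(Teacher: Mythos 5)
Your proposal is correct and follows exactly the same route as the paper: compute the maximal subgroups of $\WW=(\Z/6\Z)\wr S_9$ up to conjugacy (the paper finds $11$ of them), evaluate the quantities $q_{M_i}(I)$ for $I\in\{A_4,B_1,C,D,E\}$ via double coset representatives, apply Proposition \ref{genus_computable_6} to get the genera (the paper obtains $3569, 837, 765, 255, 147, 43, 4, 2, 12, 9, 2$, all exceeding $1$), and conclude by Proposition \ref{En_finiteness_prop}. Your remarks about the feasibility of the computation and the need to realize the inertia groups inside the concrete permutation group $\ZZ\le S_{54}$ are consistent with how the paper actually carries this out.
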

\begin{proof}
Computing representatives for the conjugacy classes of maximal subgroups of $\WW$, we obtain 11 subgroups which we denote by $M_1,\ldots, M_{11}$. The indices of these subgroups in $\WW$ are given, respectively, by
\[|\WW:M_i|:\;840, 280,256,126,84,36,9,3,2,2,2.\]
Let $L_i$ be the fixed field of $M_i$. Fixing an index $i$, we may compute the numbers $q_{M_i}(I)$ for $I\in\{A_4, B_1, C, D, E\}$. The genus of $L_i$ can then be obtained by applying Proposition \ref{genus_computable_6}. Carrying out these computations for $i=1,\ldots, 11$ we obtain, respectively, the genera
\[3569, 837, 765, 255, 147, 43, 4, 2, 12, 9, 2.\]
By Proposition \ref{En_finiteness_prop}, this implies that $E_6$ is finite. The values of $q_{M_i}(I)$ are shown in Table \ref{period_6_table}.
\end{proof}

\begin{table}[h!]
\centering
\begin{tabular}{|c|c|c|c|c|c|c|c|c|c|c|c|} 
 \hline
& $M_1$ & $M_2$ & $M_3$ & $M_4$ & $M_5$ & $M_6$ & $M_7$ & $M_8$ & $M_9$ & $M_{10}$ & $M_{11}$\\
 \hline
$q_{M_i}(A_4)$ & 416 & 132 & 120 & 60 & 40 & 16 & 4 & 0 & 1 & 0 & 1\\
 \hline
 $q_{M_i}(B_1)$ & 420 & 105 & 64 & 35 & 21 & 7 & 1 & 0 & 1 & 1 & 0\\
 \hline
 $q_{M_i}(C)$ & 0 & 0 & 128 & 0 & 0 & 0 & 0 & 0 &1 & 0 & 1\\
  \hline
 $q_{M_i}(D)$ & 0 & 0 & 0 & 0 & 0 & 0 & 0 & 2& 0 & 0 & 0\\
  \hline
  $q_{M_i}(E)$ & 0 & 0 & 128 & 0 & 0 & 0 & 0 & 2 & 1 & 0 & 1\\
 \hline
\end{tabular}
\bigskip
\caption{Ramification data for the maximal subgroups of $\WW$.}
\label{period_6_table}
\end{table}

\section{Genus bounds for $n>6$}\label{bounds_section}

The methods used in the previous section for $n=5$ and 6 can, in principle, be applied to higher values of $n$; however, there are computational limitations which make this impractical. Firstly, for $n>10$ there are issues of both memory and time which prevent us from computing the maximal subgroups of $\WW$. Thus, we are restricted to considering only $n=7,8,9,10$. Furthermore, even for these values of $n$ there are similar complications in the crucial step of computing double coset representatives. Hence, it would appear that our methods cannot be extended beyond $n=6$. However, a modification of the method will allow us to show that $E_7$ and $E_9$ are finite.

Recall that our main goal is to show that the genera of the function fields corresponding to maximal subgroups of $G$ are all greater than 1. In the cases $n=5,6$ we did this by calculating the exact values of these genera, although it would be sufficient to prove a lower bound greater than 1. In this section we will show that, as long as the maximal subgroups of $\WW$ can be computed, it is possible to obtain lower bounds for the required genera. In the cases $n=7,9$ these bounds will suffice to prove the desired result. Unfortunately, the bounds are not good enough when $n=8,10$; the difficulties are explained in \S\ref{even_bounds_section}. We keep here all of the notation introduced in earlier sections.

\begin{lem}\label{genus_contrib_lower_bound_lem} Let $H$ be a subgroup of $G$ with fixed field $L$, and $a=|G:H|$. Let $p$ be a place of $K$, let $\{\q_1,\ldots, \q_s\}=\PP_L(p)$, and $e_i=e_{\q_i|p}$. Suppose that $u$ is an upper bound for the number of indices $i$ such that $e_i=1$.
Then \[\sum_{i=1}^s (e_i-1)\ge\lceil a-\lfloor(u+a)/2\rfloor\rceil.\]
\end{lem}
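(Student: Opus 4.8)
The plan is to fix the place $p$ and analyze the constraint that the ramification indices $e_1,\ldots,e_s$ must satisfy, namely $\sum_{i=1}^s e_i = a$, together with $e_i \ge 1$ for all $i$. Let $z$ denote the number of indices with $e_i = 1$ (so $z \le u$ by hypothesis), and let the remaining $s - z$ indices have $e_i \ge 2$. We want to minimize $\sum_{i=1}^s (e_i - 1) = a - s$, which amounts to maximizing $s$, the number of places. So the problem reduces to: how large can $s$ be, given $\sum e_i = a$, that at most $u$ of the $e_i$ equal $1$, and all $e_i \ge 1$?

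\medskip

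First I would set this up as an elementary optimization. Writing $z$ for the count of indices with $e_i=1$ and $m = s - z$ for the count with $e_i \ge 2$, the sum constraint gives $a = z + \sum_{e_i \ge 2} e_i \ge z + 2m$, hence $m \le (a - z)/2$, so $s = z + m \le z + \lfloor (a-z)/2 \rfloor$. Since $0 \le z \le u$ and the function $z \mapsto z + \lfloor (a-z)/2 \rfloor$ is non-decreasing in $z$ (each unit increase of $z$ either leaves the expression unchanged or increases it by $1$), the maximum is attained at $z = u$, giving $s \le u + \lfloor (a-u)/2 \rfloor = \lfloor (u+a)/2 \rfloor$. Therefore $\sum_{i=1}^s(e_i - 1) = a - s \ge a - \lfloor (u+a)/2 \rfloor$. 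Finally, since this quantity is an integer, $a - s \ge \lceil a - \lfloor (u+a)/2 \rfloor \rceil$ (the ceiling is harmless here since the argument is already an integer, but it matches the stated form and costs nothing).

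\medskip

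The only mild subtlety — and the step I would be most careful about — is the monotonicity claim for $z \mapsto z + \lfloor (a-z)/2\rfloor$ and making sure the bound on $m$ is used correctly: one must check that having fewer unramified places never forces a smaller total number of places than the stated bound, i.e. that pushing $z$ up to $u$ genuinely maximizes $s$. Concretely, $\lfloor(a-z-1)/2\rfloor \ge \lfloor(a-z)/2\rfloor - 1$ always holds, which gives $(z+1) + \lfloor(a-z-1)/2\rfloor \ge z + \lfloor(a-z)/2\rfloor$, establishing the monotonicity. Everything else is a direct substitution. Since $s \ge 1$ and the indices are genuinely present, there is no degenerate edge case to worry about beyond noting $u \le a$ (which holds because at most $a$ places can lie over $p$ anyway, as $\sum e_i = a$). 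This makes the lemma a purely combinatorial consequence of the fundamental identity $\sum_{\q \mid p} e_{\q\mid p} = [L:K]$ for the unramified-in-$\bar\Q$ function field situation.
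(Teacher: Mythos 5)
Your proof is correct and is essentially the paper's argument: both bound $s$ via $a=\sum e_i\ge x+2(s-x)$ together with $x\le u$, then write $\sum(e_i-1)=a-s$. The paper gets $s\le\lfloor(u+a)/2\rfloor$ in one line from $2s\le x+a\le u+a$, whereas you take a small detour through the monotonicity of $z\mapsto z+\lfloor(a-z)/2\rfloor$ (which also quietly assumes $u$ is an integer, harmlessly); the substance is identical.
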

\begin{proof}
Let $x$ be the number of indices $i$ such that $e_i=1$, and let $y=s-x$. Note that $a=e_1+\cdots+e_s\ge x+2y$. Since $x\le u$, this implies $x+y\le(u+a)/2$. Thus $s\le\lfloor(u+a)/2\rfloor$ and therefore
\[\sum_{i=1}^s(e_i-1)=a-s\ge a-\lfloor(u+a)/2\rfloor,\]
from which the result follows immediately.
\end{proof}

\subsection{The case of odd $n$}\label{odd_bounds_section}

Assume that $n$ is odd. Using Lemma \ref{genus_contrib_lower_bound_lem}, we now explain how to obtain lower bounds for the genera of subextensions of $N/K$. Define subsets $\Theta_n$ and $\Lambda_n$ of $\WW$ by
\begin{align*}\Theta_n&=\{\rho_i^s\;\vert\;1\le i\le r,\;0<s<n\},\\
\Lambda_n&=\{\rho_i^{-s}\tau_{i,j}\rho_i^s\;\vert\;1\le i<j\le r, \;0\le s<n\}.
\end{align*}

For every subgroup $H$ of $\WW$ and every divisor $d$ of $n$, let
\[u_{n,d}(H)=
\begin{cases}
(r-1)!n^r\#(H\cap\Theta_{n,d})/|H| & \;\text{if } d<n,\\
2(r-2)!n^{r-1}\#(H\cap\Lambda_n)/|H| & \;\text{if } d=n,
\end{cases}\]

and
\[g_{n,d}'(H)=(\deg\Delta_{n,d})\left\lceil |\WW:H|-\left\lfloor\frac{u_{n,d}(H)+|\WW:H|}{2}\right\rfloor\right\rceil.\]

Here, $\Theta_{n,d}$ denotes the set of elements of $\Theta_n$ having cycle type $(n/d,d)$.

\begin{prop}\label{genus_lower_bound_prop}
With notation as above, let $L$ be the fixed field of $H$. Then the genus of $L$ satisfies
\begin{equation}\label{genus_lower_bound_eq}
g(L)\ge\left\lceil1-|\WW:H|+\frac{1}{2}\sum_{d|n}\max(g_{n,d}'(H),0)\right\rceil.
\end{equation}
\end{prop}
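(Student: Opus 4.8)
The plan is to derive the lower bound by starting from the exact genus formula \eqref{genus_formula} and bounding below each summand $g_{n,d}(H)$, using the fact that the only places of $K$ ramifying in $L$ are those in $\PP$, together with Corollary \ref{function_field_inertia_cor} to count unramified places over each $p_b$.

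First I would recall from Proposition \ref{dynatomic_inertia_prop} that for a place $p_b$ with $b\in R_{n,d}$, $d<n$, the inertia group $I_{\P|p_b}$ in $G$ is generated by an element $\gamma$ of cycle type $(n/d,d)$, and by case (3) of Proposition \ref{conjugacy_centralizer_megaprop} (applied after noting $n$ is odd so cycle type $(2,\cdot)$ is impossible for such $\gamma$ unless $d=n$) every such $\gamma$ is of the form $\rho_i^s$; thus $\gamma$ lies in $\Theta_{n,d}$, its centralizer in $\WW$ has order $(r-1)!n^r$ (case (3) of Proposition \ref{conjugacy_centralizer_megaprop}), and all $\WW$-conjugates of $\gamma$ that lie in $H$ are precisely the elements of $H\cap\Theta_{n,d}$ — here one must check that $\Theta_{n,d}$ is a single $\WW$-conjugacy class and that $\langle\gamma\rangle\cap\Theta_{n,d}$ issues from the same conjugacy class, which follows from Lemma \ref{rotation_conjugacy}. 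Hence the number $s(H,\gamma)$ of conjugates of $\gamma$ in $H$ equals $\#(H\cap\Theta_{n,d})$, and Corollary \ref{function_field_inertia_cor} (the formula \eqref{unramified_valuation_formula}) shows the number of places of $L$ over $p_b$ with $e_{\q|p_b}=1$ is exactly $u_{n,d}(H)$ as defined. Similarly, for $b\in R_{n,n}$, case (2) of Proposition \ref{conjugacy_centralizer_megaprop} gives $\gamma$ of the form $\rho_i^{-s}\tau_{i,j}\rho_i^s$ (the subcase $w(C_i)=C_i$ for all $i$ would force cycle type $(2,\cdot)$ coming from $(\rho_i\rho_j)^{n/2}$, impossible since $n$ is odd), with centralizer order $2(r-2)!n^{r-1}$, so the count of unramified places over $p_b$ is $u_{n,n}(H)$. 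A matching argument covers $p_\infty$ if one wishes, but since $n$ is odd the cycle type at $p_\infty$ is $(2,D/2)$ handled by case 1(a) — here I would either include $g_{n,\infty}$ among the terms or, as the statement implicitly does, absorb $p_\infty$ into the $d=n$ bookkeeping or simply bound it below by $0$; I will need to reconcile this with the precise indexing in the final inequality.

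Next, for each $p_b$ with $b\in R_{n,d}$, Lemma \ref{genus_contrib_lower_bound_lem} applied with $a=|\WW:H|$ and the upper bound $u=u_{n,d}(H)$ on the number of unramified places gives $\sum_{\q\in\PP_L(p_b)}(e_{\q|p_b}-1)\ge\lceil |\WW:H|-\lfloor(u_{n,d}(H)+|\WW:H|)/2\rfloor\rceil$. Summing over the $\deg\Delta_{n,d}=\#R_{n,d}$ places $p_b$ (Lemma \ref{delta_roots_disjoint_lem}(a)) and noting the bound is independent of $b$, we get $g_{n,d}(H)\ge g_{n,d}'(H)$. Substituting these inequalities into \eqref{genus_formula}, replacing each $g_{n,d}'(H)$ by $\max(g_{n,d}'(H),0)$ (valid since each $g_{n,d}(H)\ge 0$ trivially, so replacing a possibly-negative lower bound by $0$ keeps it a valid lower bound), and using that $g(L)$ is an integer to pass to the ceiling, yields \eqref{genus_lower_bound_eq}.

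The main obstacle, and the step requiring the most care, is the identification of $s(H,\gamma)$ with $\#(H\cap\Theta_{n,d})$ (resp. $\#(H\cap\Lambda_n)$): this requires knowing that $\Theta_{n,d}$ (resp. $\Lambda_n$) is exactly the set of elements of $\WW$ of the relevant cycle type that can occur as inertia generators — in particular that it is closed under $\WW$-conjugation and is a single conjugacy class — so that "conjugates of $\gamma$ lying in $H$" and "elements of $H$ in the prescribed set" coincide. This is where cases (2) and (3) of Proposition \ref{conjugacy_centralizer_megaprop} and Lemma \ref{rotation_conjugacy} do the work, and one must be careful that for $d=n$ the set $\Lambda_n$ as written ($s$ ranging over $0\le s<n$, $i<j$) genuinely exhausts the conjugacy class of $\tau_{i,j}$ and contains no extraneous elements of the wrong type. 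A secondary subtlety is the treatment of $p_\infty$ — whether it contributes a term $g_{n,\infty}'(H)$ analogous to the others (via case 1(a), with its own $u$-bound coming from the centralizer order $(r/2)!(2n)^{r/2}$) or is simply bounded below by zero; the cleanest route is to observe $g_{n,\infty}(H)\ge 0$ always, so dropping it only weakens the bound, which suffices.
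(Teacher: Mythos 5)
Your proposal follows essentially the same route as the paper: bound the number of unramified places over each $p_b$ using Corollary \ref{function_field_inertia_cor} together with cases (2) and (3) of Proposition \ref{conjugacy_centralizer_megaprop}, feed that bound into Lemma \ref{genus_contrib_lower_bound_lem}, and drop the $p_\infty$ contribution (which is nonnegative). One correction, though: the step you single out as the main obstacle --- proving that the unramified-place count \emph{equals} $u_{n,d}(H)$ because $\Theta_{n,d}$ is a single $\WW$-conjugacy class --- is both unnecessary and, for $d<n$, false. By Proposition \ref{wreath_conjugacy_centralizer}, $\rho_i^s$ and $\rho_j^{s'}$ are conjugate only when $\bar s=\bar s'$, so $\Theta_{n,d}$ splits into $\varphi(n/d)$ conjugacy classes and Lemma \ref{rotation_conjugacy} does not give what you want. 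All that is needed is the trivial inclusion of the conjugacy class of $\gamma$ into $\Theta_{n,d}$, giving $s(H,\gamma)\le\#(H\cap\Theta_{n,d})$; since Lemma \ref{genus_contrib_lower_bound_lem} only requires an \emph{upper} bound $u$ on the number of unramified places, this inequality suffices, and your argument goes through exactly as the paper's does once the exactness claim is replaced by the inequality. (For $d=n$ your set $\Lambda_n$ does happen to be the full conjugacy class of $\tau_{1,2}$, but again only the containment is needed.)
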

\begin{proof}
Let $d$ be a proper divisor of $n$, and let $b\in R_{n,d}$. If $\P$ is a place of $N$ lying over $p_b$, Proposition \ref{dynatomic_inertia_prop} implies that the inertia group $I_{\P|p_b}$ is generated by an element $\gamma$ with cycle type $(n/d,d)$. By part (3) of Proposition \ref{conjugacy_centralizer_megaprop}, we have $\gamma=\rho_i^s$ with $1\le i\le r$ and $0<s<n$. Moreover, the order of the centralizer of $\gamma$ is given by $|C_{\WW}(\gamma)|=(r-1)!n^r$. Thus, by Corollary \ref{function_field_inertia_cor}, the number of places $\q\in\PP_L(p_b)$ such that $e_{\q|p_b}=1$ is equal to
\[(r-1)!n^rs(H,\gamma)/|H|,\]
where $s(H,\gamma)$ is the number of conjugates of $\gamma$ which belong to $H$. Note that every conjugate of $\gamma$ belongs to $\Theta_{n,d}$, so that $s(H,\gamma)\le\#(H\cap\Theta_{n,d})$. It follows that the number of places $\q\in\PP_L(p_b)$ such that $e_{\q|p_b}=1$ is bounded above by $u_{n,d}(H)$. Letting $a=|\WW:H|$, Lemma \ref{genus_contrib_lower_bound_lem} implies that
\[\sum_{\q\in\PP_L(p_b)}(e_{\q|p_b}-1)\ge\lceil a-\lfloor(u_{n,d}(H)+a)/2\rfloor\rceil.\]
Recalling the number $g_{n,d}(H)$ defined in \S\ref{computation_section}, the above inequality implies that
$g_{n,d}(H)\ge g_{n,d}'(H)$ and therefore $g_{n,d}(H)\ge\max(g_{n,d}'(H),0)$. 

By a similar argument we can show that  $g_{n,n}(H)\ge \max(g_{n,n}'(H),0)$. Let $b\in R_{n,n}$ and $\P\in\PP_N(p_b)$. Then $I_{\P|p_b}=\langle\gamma\rangle$, where $\gamma$ has cycle type $(2,n)$. Since $n$ is odd, part 2(b) of Proposition \ref{conjugacy_centralizer_megaprop} implies that $\gamma=\rho_i^{-s}\tau_{i,j}\rho_i^s$ for some $1\le i<j\le r$ and $0\le s<n$. Moreover, $|C_{\WW}(\gamma)|=2(r-2)!n^{r-1}$. The number of places $\q\in\PP_L(p_b)$ such that $e_{\q|p_b}=1$ is therefore given by
\[2(r-2)!n^{r-1}s(H,\gamma)/|H|.\]
Now, every conjugate of $\gamma$ belongs to $\Lambda_n$, so $s(H,\gamma)\le\#(H\cap\Lambda_n)$. The number of places $\q\in\PP_L(p_b)$ with $e_{\q|p_b}=1$ is thus bounded above by $u_{n,n}$. Letting $a=|\WW:H|$, we have
\[\sum_{\q\in\PP_L(p_b)}(e_{\q|p_b}-1)\ge\lceil a-\lfloor(u_{n,n}(H)+a)/2\rfloor\rceil,\]
which implies that $g_{n,n}(H)\ge g_{n,n}'(H)$. We have thus proved:
\[g_{n,d}(H)\ge\max(g_{n,d}'(H),0)\;\text{ for every divisor $d$ of $n$}.\]
Now \eqref{genus_lower_bound_eq} follows from the genus formula \eqref{genus_formula}.
\end{proof}

\begin{rem}\label{infty_bound_rem}
Note that in proving the bound \eqref{genus_lower_bound_eq} we have disregarded the contribution to the genus coming from ramified places lying over $p_{\infty}$. Though the bound would certainly be improved if these places were considered, doing so would substantially increase the amount of time and memory required to compute the bound. In particular, it would require determining the intersection $H\cap C$, where $C$ is the set of all conjugates in $\WW$ of the permutation $(1,2)(3,4)\cdots(r-1,r)$. Now, part 1(a) of Proposition \ref{conjugacy_centralizer_megaprop} implies that $\#C=\frac{n^{r}r!}{(r/2)!(2n)^{r/2}}\ge n^{r/2}$, which suggests that $C$ might be difficult to construct in practice. And indeed, our attempts to compute all the elements of $C$ in the case $n=7$ failed due to excessive memory requirements.
\end{rem}

\begin{rem}\label{intersection_size_rem}
In order to compute the number on the right-hand side of \eqref{genus_lower_bound_eq}, the key step is to determine the cardinalities of the sets $H\cap\Theta_{n,d}$ and $H\cap\Lambda_n$, which would be difficult to do if all the sets involved were quite large. Fortunately, while the group $H$ may be extremely large (for instance, $H$ might be the largest maximal subgroup of the Galois group of $\Phi_9$, in  which case $|H|\approx 9.73\times10^{127}$), the sets $\Lambda_n$ and $\Theta_{n,d}$ are small. Indeed, $\#\Theta_{n,d}\le\#\Theta_n=r(n-1)$ and $\#\Lambda_n=n\cdot{r\choose 2}$. This makes it computationally feasible to construct the sets $H\cap\Lambda_n$ and $H\cap\Theta_{n,d}$, and hence to compute the desired lower bound.
\end{rem}

We can now complete the proof of Theorem \ref{finiteness_theorem_intro}. The finiteness of $E_7$ and $E_9$ is proved by a series of computations carried out using \textsc{Magma}; the code used for these computations is available in \cite{krumm_finiteness_code}.

\begin{thm}
The sets $E_7$ and $E_9$ are finite.
\end{thm}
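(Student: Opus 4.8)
The plan is to invoke Proposition \ref{genus_lower_bound_prop}, which is available because $n=7$ and $n=9$ are odd, together with Proposition \ref{En_finiteness_prop}. By the latter it suffices to prove that the fixed field of every maximal subgroup of $\WW=(\Z/n\Z)\wr S_r$ has genus greater than $1$; and by the former this follows once the right-hand side of \eqref{genus_lower_bound_eq} is seen to exceed $1$ for each such subgroup. Here $r=\deg\Phi_n/n=2\nu(n)/n$, so $r=18$ when $n=7$ and $r=56$ when $n=9$.

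The computation proceeds as follows. First I would build $\WW$ and, using \textsc{Magma}'s \texttt{MaximalSubgroups}, obtain representatives $M_1,\ldots,M_s$ of its conjugacy classes of maximal subgroups together with the indices $a_i=|\WW:M_i|$. Next I would construct the sets $\Theta_{n,d}$ (for each proper divisor $d$ of $n$) and $\Lambda_n$ introduced in \S\ref{odd_bounds_section}; by Remark \ref{intersection_size_rem} these are small ($\#\Theta_{n,d}\le r(n-1)$ and $\#\Lambda_n=n\binom r2$), so the intersections $M_i\cap\Theta_{n,d}$ and $M_i\cap\Lambda_n$ can be computed by direct enumeration even though $|M_i|$ is enormous. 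From the cardinalities of these intersections I would compute $u_{n,d}(M_i)$ and then $g_{n,d}'(M_i)$ for every divisor $d$ of $n$, and finally evaluate
\[\left\lceil 1-a_i+\frac12\sum_{d\mid n}\max\bigl(g_{n,d}'(M_i),0\bigr)\right\rceil.\]
The assertion is that this quantity is greater than $1$ for every $i$, which by Proposition \ref{En_finiteness_prop} shows that $E_7$ and $E_9$ are finite. The code implementing all of this is available in \cite{krumm_finiteness_code}.

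I expect the main obstacle to be computational feasibility rather than mathematical difficulty. On the one hand, $|\WW|$ is huge --- for $n=9$ it is $9^{56}\cdot 56!$ --- so enumerating the conjugacy classes of maximal subgroups is already near the limit of what is practical, and is precisely why the method stops at $n=10$ (this is also why we settle for the lower bound \eqref{genus_lower_bound_eq} rather than exact genera, since double coset computations of the kind used for $n=5,6$ would be too expensive). On the other hand, because the bound \eqref{genus_lower_bound_eq} deliberately discards the contribution of ramified places over $p_{\infty}$ (see Remark \ref{infty_bound_rem}), there is a real risk that for maximal subgroups of very small index --- where $1-a_i$ is negative and the sum over $d$ has little room to compensate --- the bound will fail to exceed $1$. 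Checking that this does not happen for $n=7$ or $n=9$ is the crux of the argument; for the even values $n=8,10$ the analogous bound is genuinely insufficient, as discussed in \S\ref{even_bounds_section}.
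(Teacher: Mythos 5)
Your proposal is correct and follows essentially the same route as the paper: reduce to Proposition \ref{En_finiteness_prop}, bound the genera of the fixed fields of the maximal subgroups of $\WW$ from below via Proposition \ref{genus_lower_bound_prop} (exploiting the small size of $\Theta_{n,d}$ and $\Lambda_n$ to make the intersections computable), and verify by machine that each bound exceeds $1$. The paper's computation confirms your assertion, with the smallest lower bounds obtained being $6$ for $n=7$ and $4$ for $n=9$.
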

\begin{proof}
We consider first the case of $E_7$. The polynomial $\Phi_7$ has $D=126$ roots which can be partitioned into $r=18$ cycles. Thus, $\WW=(\Z/7\Z)\wr S_{18}$. Constructing the group $\WW$ and computing representatives for the conjugacy classes of maximal subgroups of $\WW$, we obtain 16 groups which we denote by $M_1,\ldots, M_{16}$. The sets $\Theta_7$ and $\Lambda_7$ are easily constructed; we find that $\#\Theta_7=108$ and $\#\Lambda_7=1071$. 

Let $L_i$ denote the fixed field of $M_i$. For each subgroup $M_i$ we compute the numbers $u_{7,7}(M_i)$ and $u_{7,1}(M_i)$, and use these to calculate $g_{7,7}'(M_i)$ and $g_{7,1}'(M_i)$. This is a trivial computation given the small size of the sets $\Theta_7$ and $\Lambda_7$. The inequality \eqref{genus_lower_bound_eq} then yields a lower bound for $g(L_i)$.

Carrying out these calculations, the lowest lower bound we obtain for the genera $g(L_i)$ is 6; hence $g(L_i)>1$ for every $i$, which implies that $E_7$ is finite. The total time required for all of the above computations is 0.42 s.

The proof of finiteness of $E_9$ follows the same steps as above. In this case the lowest lower bound we obtain for $g(L_i)$ is 4. Total computation time is 197 s, with 179 s spent computing the maximal subgroups of $\WW$.
\end{proof}

\subsection{The case of even $n$}\label{even_bounds_section}

In the case where $n$ is even, a bound similar to \eqref{genus_lower_bound_eq} can be proved; indeed, this only requires modifying the definition of the number $u_{n,n}(H)$. Unfortunately, when $n=8$ or 10 the bounds for the genera $g(L_i)$ obtained in this way are not greater than 1; in fact many of them are negative. We suspect, therefore, that most of the ramification in the extensions $L_i/K$ occurs over the place $p_{\infty}$. In order to improve the bounds for $g(L_i)$ we would have to determine the genus contribution coming from places lying over $p_{\infty}$. However, as discussed in Remark \ref{infty_bound_rem}, it is computationally infeasible to do this. Thus, we are unable to improve the bounds enough to show that $E_8$ and $E_{10}$ are finite.

\section{Density results}\label{density_section}

Having proved Theorem \ref{finiteness_theorem_intro}, we now turn our attention to Theorem \ref{density_thms_intro}. Recall that if $n$ is a positive integer and $c\in\Q$, we denote by $T_{n,c}$ the set of prime numbers $p$ such that the map $\phi_c(x)=x^2+c$ does not have a point of period $n$ in $\Q_p$. By applying Lemma \ref{galois_density_lem} below we will be able to calculate the density of $T_{n,c}$ for $n\in\{5,6,7,9\}$ and all but finitely many $c\in\Q$.

For every polynomial $F\in\Q[x]$, let $S_{F}$ be the set of all primes $p$ such that $F$ has a root in $\Q_p$. The Chebotarev density theorem implies that the density of $S_F$, which we denote by $\delta(S_F)$, exists and can be computed if the Galois group of $F$ is known. More precisely, we have the following result.

\begin{lem}\label{galois_density_lem}
Let $F\in \Q[x]$ be a separable polynomial of degree $D\ge 1$. Let $S$ be a splitting field for $F$, and set $G=\Gal(S/\Q)$. Let $\alpha_1,\ldots, \alpha_D$ be the roots of $F$ in $S$ and, for each index $i$, let $G_i$ denote the stabilizer of $\alpha_i$ under the action of $G$. Then the Dirichlet density of $S_F$ is given by
\begin{equation}\label{density_formula}
\delta(S_F)=\frac{\left|\bigcup_{i=1}^D G_i\right|}{|G|}.
\end{equation}
\end{lem}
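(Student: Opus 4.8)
The plan is to translate the condition ``$F$ has a root in $\Q_p$'' into a statement about Frobenius conjugacy classes and then apply the Chebotarev density theorem. First I would fix a prime $p$ that is unramified in $S$ and does not divide the denominators or the discriminant of $F$ (this excludes only finitely many primes, which do not affect the density). For such $p$, choose a prime $\mathfrak P$ of $S$ lying over $p$ and let $\mathrm{Frob}_{\mathfrak P}\in G$ be the associated Frobenius automorphism; its conjugacy class is well defined. The key local-global observation is that $F$ has a root in $\Q_p$ if and only if $F$ has a root in some unramified extension's residue field coming from a degree-one factor, which (for $p$ as above) is equivalent to $\mathrm{Frob}_{\mathfrak P}$ fixing at least one root $\alpha_i$ of $F$ in $S$ --- i.e.\ $\mathrm{Frob}_{\mathfrak P}\in\bigcup_{i=1}^D G_i$. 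Concretely: the factorization type of $F$ over $\Q_p$ matches the cycle type of $\mathrm{Frob}_{\mathfrak P}$ acting on the roots, so $F$ has a linear factor over $\Q_p$ exactly when that permutation has a fixed point.

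Once this equivalence is established, the set $S_F$ agrees, up to a finite set of primes, with the set of primes whose Frobenius class lies in the union $\mathcal U:=\bigcup_{i=1}^D G_i\subseteq G$. Note $\mathcal U$ is a union of conjugacy classes: it is clearly conjugation-stable since $gG_ig^{-1}=G_{g\alpha_i}$ and $G$ permutes the $\alpha_i$. The Chebotarev density theorem then gives
\[
\delta(S_F)=\frac{|\mathcal U|}{|G|}=\frac{\left|\bigcup_{i=1}^D G_i\right|}{|G|},
\]
which is the claimed formula \eqref{density_formula}.

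The step I expect to be the main obstacle --- or at least the one requiring the most care --- is the local-global dictionary relating ``root in $\Q_p$'' to ``Frobenius has a fixed point.'' One must argue that for all but finitely many $p$ the polynomial $F$ is $p$-integral and separable mod $p$, so that the splitting field $S$ is unramified at $p$ and the decomposition group at $\mathfrak P$ is cyclic generated by $\mathrm{Frob}_{\mathfrak P}$; then Hensel's lemma upgrades a simple root mod $p$ to a root in $\Z_p\subset\Q_p$, and conversely a root in $\Q_p$ reduces to a root mod $p$ whose orbit under $\mathrm{Frob}_{\mathfrak P}$ is a singleton. The finitely many excluded primes contribute nothing to the Dirichlet density, so they can be safely ignored. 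The remaining points --- that $\mathcal U$ is a union of conjugacy classes, and the bookkeeping that $|S_i|$ need not be recorded (indeed all residue degrees are $1$ in the relevant reduction) --- are routine.
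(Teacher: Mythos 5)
Your proposal is correct. The paper does not give an argument at all here---it simply cites Theorem 2.1 of \cite{krumm_lgp}---and your proof is the standard one underlying that cited result: discard the finitely many primes dividing the discriminant, the denominators, or ramifying in $S$; use the Dedekind correspondence between the factorization of $F$ bmod $p$ and the cycle type of Frobenius on the roots, together with Hensel's lemma, to identify $S_F$ (up to a density-zero set) with the primes whose Frobenius class meets $\bigcup_i G_i$; observe that this union is conjugation-stable since $gG_ig^{-1}=G_{g\alpha_i}$; and apply Chebotarev. The only points needing the care you already flag are the $p$-integrality of a $\Q_p$-root (handled by excluding primes dividing the leading coefficient) and the separability of $F$ bmod $p$, both of which exclude only finitely many primes and hence do not affect the Dirichlet density.
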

\begin{proof}
This follows from Theorem 2.1 in \cite{krumm_lgp}.
\end{proof}

Note that for the purpose of computing $\delta(S_F)$ using the formula \eqref{density_formula}, the group $G$ may be replaced with any permutation group $\calG$ such that $G\equiv \calG$. Fixing a positive integer $n$, let $G_n$ be the Galois group of $\Phi_n$ over $\Q(t)$ and let $\calG=\Aut(\Gamma)$, where $\Gamma$ is the graph defined in \S\ref{graph_group_section}. Recall that $G_n\equiv \calG$.

\begin{lem}\label{counting_lem}
Let $\mathcal M$ be the set of all elements of $\calG$ having no fixed point. The cardinality of $\mathcal M$ is given by the formula
\[\#\mathcal M=\sum_{i=0}^r(n-1)^i\cdot n^{r-i}\cdot d(r,i),\]
where
\[d(r,i)={r\choose i}(r-i)!\sum_{k=0}^{r-i}\frac{(-1)^k}{k!}.\]
\end{lem}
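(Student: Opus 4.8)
The plan is to count the elements of $\calG=\Aut(\Gamma)$ that have no fixed point by stratifying according to how many of the $r$ cycles of $\Gamma$ are permuted nontrivially among themselves. Recall from \S\ref{wreath_presentation_section} that $\calG\equiv\WW=(\Z/n\Z)\wr S_r$ acting on $X=(\Z/n\Z)\times\{1,\dots,r\}$, so it suffices to count fixed-point-free elements $w=(f,\pi)\in\WW$. By Lemma \ref{w_cycle_action_lem}(1), $w$ maps $C_i$ to $C_{\pi(i)}$; hence if $i$ lies in a $\pi$-cycle of length $\ell>1$, then $w$ moves every element of $C_i$ (no point of $C_i$ can be fixed since $w(C_i)=C_{\pi(i)}\ne C_i$), while if $\pi(i)=i$, then by Lemma \ref{w_cycle_action_lem}(2) $w$ acts on $C_i$ as $\rho_i^s$ for some $0\le s<n$, and by \eqref{rotation_power_action} this has a fixed point in $C_i$ if and only if $s=0$. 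Therefore $w$ is fixed-point-free precisely when, for every index $i$ with $\pi(i)=i$, the corresponding coordinate value $f(i)$ is nonzero.

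First I would fix the number $i$ of indices fixed by $\pi$ (so $0\le i\le r$; note the clash of letters, which I would rename, say to $j$, in the final write-up). The number of permutations $\pi\in S_r$ having exactly $i$ fixed points is $d(r,i)=\binom{r}{i}(r-i)!\sum_{k=0}^{r-i}(-1)^k/k!$, i.e. $\binom{r}{i}$ times the number of derangements of the remaining $r-i$ letters; this is the standard inclusion–exclusion count for derangements. Next, for a fixed such $\pi$, I would count the admissible $f\in(\Z/n\Z)^r$: the $i$ coordinates indexed by the fixed points of $\pi$ must each be nonzero, giving $(n-1)^i$ choices, and the remaining $r-i$ coordinates are unrestricted, giving $n^{r-i}$ choices. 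Crucially, the condition for $w=(f,\pi)$ to be fixed-point-free depends only on these coordinate constraints and not on the particular $\pi$ with $i$ fixed points, because on the non-fixed cycles of $\pi$ the element $w$ is automatically fixed-point-free regardless of $f$. Multiplying and summing over $i$ yields
\[
\#\mathcal M=\sum_{i=0}^r (n-1)^i\, n^{r-i}\, d(r,i),
\]
which is the claimed formula.

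The only genuinely substantive point — and the one I would state carefully — is the reduction via Lemma \ref{w_cycle_action_lem}: that $w$ fixes a point of $X$ if and only if $\pi$ fixes some index $i$ with $f(i)=0$. Everything else is elementary enumerative combinatorics (derangement counting and a product rule), so I do not expect any real obstacle; the main care needed is bookkeeping and avoiding the notational collision between the summation index $i$ and the indices of the cycles.
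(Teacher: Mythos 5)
Your proposal is correct and follows essentially the same route as the paper: both stratify the fixed-point-free elements by the number $i$ of cycles of $\Gamma$ stabilized setwise (equivalently, fixed points of $\pi$), observe that freeness from fixed points forces exactly the rotation on each stabilized cycle to be nontrivial while imposing nothing on the rest, and then multiply $d(r,i)\cdot(n-1)^i\cdot n^{r-i}$ and sum. The only cosmetic difference is that you phrase the decomposition via the wreath-product coordinates $(f,\pi)$ and Lemma \ref{w_cycle_action_lem}, whereas the paper uses the equivalent normal form $\rho_1^{a_1}\cdots\rho_r^{a_r}\pi$ in $\Aut(\Gamma)$.
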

\begin{proof}
The number $d(r,i)$ counts the permutations in $S_r$ which fix exactly $i$ elements of the set $\{1,\ldots, r\}$. The above formula for $d(r,i)$ is proved by an inclusion-exclusion argument; see Example 2.2.1 in \cite{stanley}.

For $0\le i\le r$, let $\mathcal M_i$ be the set of elements of $\mathcal M$ which fix exactly $i$ cycles of $\Gamma$. Clearly $\mathcal M$ is a disjoint union of the sets $\mathcal M_i$, so in order to prove the lemma it suffices to show that $\#\mathcal M_i=(n-1)^i\cdot n^{r-i}\cdot d(r,i)$.

Recall that every element $\sigma\in\calG$ has a unique representation of the form $\rho_1^{a_1}\cdots\rho_r^{a_r}\pi$, where $\pi\in S_r$ describes the action of $\sigma$ on the set of cycles of $\Gamma$, $\rho_k$ represents a $(1/n)$ rotation on the $k^{\text{th}}$ cycle, and $0\le a_k<n$.

Let $0\le i\le r$. Then an element $\sigma\in\calG$ represented as above belongs to $\mathcal M_i$ if and only if there exist indices $k_1,\ldots, k_i\in\{1,\ldots, r\}$ such that $\pi$ fixes $k_1,\ldots, k_i$ and has no other fixed points; and $a_{k_j}>0$ for $j=1,\ldots, i$. In constructing elements of $\mathcal M_i$ we therefore have $d(r,i)$ choices for $\pi$, $n-1$ choices for the exponents $a_{k_j}$, and $n$ choices for the remaining $r-i$ exponents. It follows that $\#\mathcal M_i=(n-1)^i\cdot n^{r-i}\cdot d(r,i)$, as required.
\end{proof}

\subsection*{Proof of Theorem \ref{density_thms_intro}}
Let $\Delta(t)$ be the discriminant of $\Phi_n$ and let
\[E=\{c\in\Q\;\vert\;\Delta(c)=0\}\cup E_5\cup E_6\cup E_7\cup E_9.\]
By the results of \S\S\ref{computation_section}-\ref{bounds_section}, $E$ is a finite set. Fix $n\in\{5,6,7,9\}$ and $c\in\Q\setminus E$. Since $c\notin E_n$, we have $G_{n,c}\cong G_n$. This implies that $G_{n,c}\equiv G_n$, where $G_{n,c}$ acts on the roots of $\Phi_{n}(c,x)$. Indeed, since $\Delta(c)\ne 0$, there is a subgroup $H$ of $G_n$ such that $G_{n,c}\equiv H$ (see Theorem 2.9 in \cite[Chap. VII]{lang_algebra}). By order considerations, $H$ must be equal to $G_n$.

Let $S_{n,c}$ be the set of primes $p$ such that $\Phi_n(c,x)$ has a root in $\Q_p$. The fact that $\Delta(c)\ne 0$ implies that $S_{n,c}$ is the complement of $T_{n,c}$. Indeed, every root of $\Phi_n(c,x)$ has period $n$ under $\phi_c$; see Theorem 2.4(c) in \cite{morton-patel}.

Since $G_{n,c}\equiv G_n\equiv\calG$, Lemma \ref{galois_density_lem} applied to $F(x)=\Phi_n(c,x)$ yields
\[\delta(S_{n,c})=\frac{\left|\bigcup_{\alpha\in\Gamma} \calG_{\alpha}\right|}{|\calG|},\]
where $\calG_{\alpha}$ is the stabilizer of $\alpha$ in $\calG$. It follows that $\delta(T_{n,c})=(\#\mathcal M)/|\calG|$, where 
$\mathcal M$ is defined as in Lemma \ref{counting_lem}. Using this lemma we obtain
\begin{align*}
\delta(T_{5,c})&=(9210721)/(6!5^6)\approx 0.8187,\\
\delta(T_{6,c})&=(3095578863701)/(9!6^9)\approx 0.8465,\\
\delta(T_{7,c})&\approx 0.8669,\\
\delta(T_{9,c})&\approx0.8948.
\end{align*}
This completes the proof of the theorem.
\qed

\section{The exceptional sets $E_n$}\label{exceptional_search_section}

We end this article with a brief discussion concerning the elements of the sets $E_n$. Recall the following notation introduced in \S\ref{preliminaries_section}: $S$ is a splitting field of $\Phi_n$ over $\Q(t)$; $G_n=\Gal(S/\Q(t))$; $M_1,\ldots, M_s$ are representatives of the conjugacy classes of maximal subgroups of $G_n$; and $\calX$ is the smooth projective curve with function field $S$. 

Our approach to proving the finiteness of $E_n$ for $n>4$ is based on Lemma \ref{explicit_HIT_lem}, which shows that $E_n$ is finite if every quotient curve $\calX/M_i$ has genus greater than 1. The proof of the lemma suggests that we may determine the elements of $E_n$ by finding a certain finite set $\mathcal E$ and determining all the rational points on the curves $\calX/M_i$. The set $\mathcal E$ as well as affine models for these curves can be obtained using the methods of the article \cite{krumm-sutherland}; however, the rational points on $\calX/M_i$ seem impossible to determine due to the large genera of the curves. (For instance, when $n=5$ one of the curves has genus 9526, as seen in the proof of Theorem \ref{E5_finiteness_thm}.) Hence, the problem of explicitly determining $E_n$ seems intractable at present. Nevertheless, it is possible to prove some basic results about the elements of $E_n$.
\begin{prop}
For every positive integer $n$ we have $\{0,-2\}\subseteq E_n$.
\end{prop}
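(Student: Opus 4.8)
The plan is to exploit the special structure of the maps $\phi_0(x)=x^2$ and $\phi_{-2}(x)=x^2-2$ --- the squaring map and (a conjugate of) a Chebyshev map --- whose periodic points all generate \emph{abelian} extensions of $\Q$; this will force $G_{n,0}$ and $G_{n,-2}$ to be abelian, whereas $G_n\cong(\Z/n\Z)\wr S_r$ is non-abelian for $n\ge 3$, so that $0,-2\in E_n$ for all such $n$. The reduction step is that $\Phi_n$ divides $\phi^n(x)-x$ in $\Q[t,x]$ (indeed $\phi^n(x)-x=\prod_{d\mid n}\Phi_d(x)$), so every root of $\Phi_n(c,x)$ is a fixed point of $\phi_c^{\,n}$.

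For $c=0$, a root $x$ of $\Phi_n(0,x)$ satisfies $x^{2^n}=x$, hence equals $0$ or a $(2^n-1)$-st root of unity; thus the splitting field of $\Phi_n(0,x)$ over $\Q$ lies in the cyclotomic field $\Q(\zeta_{2^n-1})$, and $G_{n,0}$ is abelian. For $c=-2$, I would write each $x\in\bar\Q$ as $x=y+y^{-1}$ (taking $y$ to be a root of $y^2-xy+1$), so that $\phi_{-2}^{\,n}(y+y^{-1})=y^{2^n}+y^{-2^n}$; a root of $\Phi_n(-2,x)$ then satisfies $y^{2^n}+y^{-2^n}=y+y^{-1}$, which upon clearing denominators factors as $(y^{2^n-1}-1)(y^{2^n+1}-1)=0$. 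Hence $y$ is a root of unity whose order divides $2^{2n}-1$, so $x=y+y^{-1}\in\Q(\zeta_{2^{2n}-1})$, and again the splitting field of $\Phi_n(-2,x)$ over $\Q$ is abelian, so $G_{n,-2}$ is abelian.

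It remains to observe that $G_n$ is non-abelian for $n\ge 3$: here $r=2\nu(n)/n\ge 2$ (readily verified from the formula for $\nu$; e.g.\ $r=2$ when $n=3$), so $G_n$ is a nontrivial wreath product with $n,r\ge 2$, hence non-abelian; comparing with the abelian groups $G_{n,0}$ and $G_{n,-2}$ yields $0,-2\in E_n$. For $n\in\{1,2\}$ the group $G_n$ is itself abelian, so this argument does not apply, but these cases are easily handled by hand --- for instance $\Phi_1(0,x)=x(x-1)$ and $\Phi_1(-2,x)=(x-2)(x+1)$ both split over $\Q$, so $G_{1,0}=G_{1,-2}=\{1\}\ne\Z/2\Z\cong G_1$. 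I do not anticipate any real difficulty: the only points requiring attention are the factorization in the $c=-2$ step and the elementary bound $r\ge 2$, and the substance of the proposition is just the remark that the specializations $t\mapsto 0$ and $t\mapsto -2$ replace the generic (maximal, non-abelian) Galois group by an abelian one.
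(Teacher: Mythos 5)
Your argument is the paper's own proof: both specializations are shown to split over cyclotomic fields (via $x^{2^n}=x$ at $c=0$ and the Chebyshev substitution $x=y+y^{-1}$, $(y^{2^n-1}-1)(y^{2^n+1}-1)=0$ at $c=-2$), so the specialized Galois groups are abelian while $G_n\cong(\Z/n\Z)\wr S_r$ is not. The one place you go beyond the paper is in flagging $n\in\{1,2\}$, where $G_n$ is itself abelian and the comparison argument collapses; the paper silently ignores this. Your explicit check for $n=1$ is fine, but your claim that $n=2$ is ``easily handled by hand'' does not survive inspection: $\Phi_2(t,x)=x^2+x+t+1$, so $\Phi_2(0,x)=x^2+x+1$ and $\Phi_2(-2,x)=x^2+x-1$ are both irreducible quadratics with Galois group $\Z/2\Z\cong G_2$, whence $0,-2\notin E_2$ and the proposition as literally stated fails at $n=2$. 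This is a defect of the paper's statement rather than of your method (the paper's focus is $n\ge 5$, where $r\ge 2$ and the wreath product is genuinely non-abelian), but since you explicitly asserted the small cases could be patched, you should either restrict the claim to $n\ne 2$ or note the counterexample rather than wave at it.
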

\begin{proof}
For every $c\in\Q$, the polynomial $\Phi_n(c,x)$ divides $\phi_c^n(x)-x$, where $\phi_c(x)=x^2+c$. In particular, $\Phi_n(0,x)$ divides $x^{2^n}-x$, which implies that $\Phi_n(0,x)$ splits over a cyclotomic field. It follows that the Galois group $G_{n,0}$ is abelian, hence not isomorphic to $G_n$, since $G_n\cong (\Z/n\Z)\wr S_r$. Thus $0\in E_n$.

For $c=-2$ the polynomial $\phi_c$ is a Chebyshev polynomial satisfying
\[\phi_c(x+1/x)=x^2+1/x^2.\]

We claim that the polynomial $\Phi_n(-2,x)$ splits over the cyclotomic field $\Q(\zeta)$, where $\zeta$ is a primitive $(2^{2n}-1)^{\text{th}}$ root of unity; as above, this will imply that $-2\in E_n$. Suppose that $\alpha\in\bar\Q$ is a root of $\Phi_n(-2,x)$, and let $\beta\in\bar\Q$ satisfy $\beta+1/\beta=\alpha$. Then $\beta^{2^n}+1/\beta^{2^n}=\beta+1/\beta$, which implies that $(\beta^{2^{n}+1}-1)(\beta^{2^n-1}-1)=0$
and hence $\beta^{2^{2n}-1}=1$. Thus $\beta$, and therefore $\alpha$, belongs to $\Q(\zeta)$. This proves the claim.
\end{proof}

Given a positive integer $n$ for which $E_n$ is finite, one can attempt to find all the elements of $E_n$ by carrying out an exhaustive search within specified height bounds. Recall that the height of a rational number $a/b$ with $\gcd(a,b)=1$ is given by $\max(|a|,|b|)$. Fixing a height bound $h$, it a straightforward procedure to construct the set $B(h)$ of all rational numbers having height at most $h$. One can then construct all the polynomials $\Phi_n(c,x)$ for $c\in B(h)$, compute their Galois groups $G_{n,c}$ (for instance, using the algorithm of Fieker and Kl\"{u}ners \cite{fieker-kluners}, which is implemented in \textsc{Magma}), and check whether $G_{n,c}\cong (\Z/n\Z)\wr S_r$. The cost of carrying out this computation grows quickly with $n$, given the large degree of $\Phi_n$. For $n=7$ the degree of $\Phi_n$ is 126, and the above computation is very slow even for small height bounds $h$. However, for $n=5$ and 6 we have the following result.

\begin{prop}
Let $B(h)$ denote the set of all rational numbers with height at most $h$. Then 
\begin{align*}
E_5\cap B(50)&=\{-2, -16/9, -3/2, -4/3, -5/8, 0\},\\
E_6\cap B(20)&=\{-4, -2, 0\}.
\end{align*}
\end{prop}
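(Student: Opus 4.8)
The plan is to establish both equalities by the exhaustive search outlined just before the statement. Fix $n\in\{5,6\}$, write $r=6$ if $n=5$ and $r=9$ if $n=6$, and let $h=50$ or $20$ accordingly; recall that $G_n\cong(\Z/n\Z)\wr S_r$, so $|G_n|=n^r\cdot r!$. First I would enumerate the finite set $B(h)$ of rationals of height at most $h$ (a few thousand elements when $h=50$, a few hundred when $h=20$), and for each $c\in B(h)$ decide whether $G_{n,c}\cong G_n$. Letting $\Delta_n(t)\in\Q[t]$ be the discriminant of $\Phi_n$ with respect to $x$, the organizing observation is that whenever $\Delta_n(c)\ne0$ the specialization theorem (Theorem 2.9 in \cite[Chap.~VII]{lang_algebra}, applied exactly as in the proof of Theorem \ref{density_thms_intro}) realizes $G_{n,c}$ as a subgroup of $G_n$ up to permutation isomorphism; for such $c$ one therefore has $c\notin E_n$ if and only if $|G_{n,c}|=n^r\cdot r!$.

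The core computation is then: for each $c\in B(h)$, compute $G_{n,c}=\Gal(\Phi_n(c,x)/\Q)$ as a permutation group with the Fieker--Kl\"uners algorithm \cite{fieker-kluners} as implemented in \textsc{Magma} \cite{magma}, and test $|G_{n,c}|=n^r\cdot r!$ (falling back to a direct test of $G_{n,c}\not\cong(\Z/n\Z)\wr S_r$ for the finitely many, possibly zero, values with $\Delta_n(c)=0$). The $c\in B(h)$ that fail the test, together with the rational roots of $\Delta_n$ in $B(h)$, form $E_n\cap B(h)$; I would then verify that the resulting list coincides with the one in the statement and cross-check it against the preceding proposition (which forces $0,-2\in E_n$) and the structural facts from the earlier sections. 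Since \textsc{Magma}'s \texttt{GaloisGroup} returns a certified answer, this constitutes a rigorous proof, and the code is provided in \cite{krumm_finiteness_code}.

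The main obstacle is the cost of running many Galois group computations in degree $\deg\Phi_5=30$ and $\deg\Phi_6=54$ over $\Q$: this is precisely why the search range shrinks from $h=50$ for $n=5$ to $h=20$ for $n=6$. The loop is kept tractable by cheap preliminary tests that immediately dispatch many exceptional values: if $\Phi_n(c,x)$ is reducible over $\Q$, or if the degree-$r$ resolvent attached to the imprimitivity blocks --- the polynomial whose roots are the traces of the $\phi_c$-cycles among the roots of $\Phi_n(c,x)$, cf.\ Lemma \ref{F0_primitive_element} --- is reducible over $\Q$, then $G_{n,c}$ is visibly a proper subgroup of $G_n$ and $c\in E_n$. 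For the remaining $c$, factoring $\Phi_n(c,x)$ modulo a handful of small primes of good reduction (Dedekind's theorem) rapidly separates the generic $c$, on which \texttt{GaloisGroup} runs quickly, from the few surviving candidates, for which $G_{n,c}$ is small and the computation is easy. Carrying this out over all of $B(h)$ for $n=5$ and $n=6$ yields the two claimed equalities.
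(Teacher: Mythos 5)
Your proposal is correct and follows essentially the same route the paper takes: an exhaustive search over $B(h)$, computing each $G_{n,c}$ with the Fieker--Kl\"uners algorithm in \textsc{Magma} and comparing against $(\Z/n\Z)\wr S_r$, with the specialization theorem justifying that an order check suffices away from the zeros of the discriminant. Your pre-filtering tricks (reducibility of $\Phi_n(c,x)$, the trace resolvent, Dedekind reductions) are sensible implementation refinements but do not change the underlying argument.
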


\begin{bibdiv}
\begin{biblist}

\bib{artin}{book}{
   author={Artin, Emil},
   title={Algebraic numbers and algebraic functions},
   publisher={AMS Chelsea Publishing, Providence, RI},
   date={2006},
}

\bib{beckmann}{article}{
   author={Beckmann, Sybilla},
   title={On finding elements in inertia groups by reduction modulo $p$},
   journal={J. Algebra},
   volume={164},
   date={1994},
   number={2},
   pages={415--429},
}

\bib{bousch}{thesis}{
   author={Bousch, Thierry},
   title={Sur quelques probl\`{e}mes de dynamique holomorphe},
   school={Universit\'{e} de Paris-Sud, Centre d'Orsay},
   year={1992}
}

\bib{cannon-holt}{article}{
   author={Cannon, John},
   author={Holt, Derek F.},
   title={Computing maximal subgroups of finite groups},
   journal={J. Symbolic Comput.},
   volume={37},
   date={2004},
   number={5},
   pages={589--609},
}

\bib{dummit-foote}{book}{
   author={Dummit, David S.},
   author={Foote, Richard M.},
   title={Abstract algebra},
   edition={3},
   publisher={John Wiley \& Sons, Inc., Hoboken, NJ},
   date={2004},
}

\bib{dixon-mortimer}{book}{
   author={Dixon, John D.},
   author={Mortimer, Brian},
   title={Permutation groups},
   series={Graduate Texts in Mathematics},
   volume={163},
   publisher={Springer-Verlag, New York},
   date={1996},
}

\bib{efrat}{book}{
   author={Efrat, Ido},
   title={Valuations, orderings, and Milnor $K$-theory},
   series={Mathematical Surveys and Monographs},
   volume={124},
   publisher={American Mathematical Society, Providence, RI},
   date={2006},
}

\bib{faltings}{article}{
   author={Faltings, G.},
   title={Endlichkeitss\"atze f\"ur abelsche Variet\"aten \"uber Zahlk\"orpern},
   journal={Invent. Math.},
   volume={73},
   date={1983},
   number={3},
   pages={349--366},
}

\bib{fieker-kluners}{article}{
   author={Fieker, Claus},
   author={Kl\"uners, J\"urgen},
   title={Computation of Galois groups of rational polynomials},
   journal={LMS J. Comput. Math.},
   volume={17},
   date={2014},
   number={1},
   pages={141--158},
}

\bib{flynn-poonen-schaefer}{article}{
   author={Flynn, E. V.},
   author={Poonen, Bjorn},
   author={Schaefer, Edward F.},
   title={Cycles of quadratic polynomials and rational points on a genus-$2$
   curve},
   journal={Duke Math. J.},
   volume={90},
   date={1997},
   number={3},
   pages={435--463},
}

\bib{frucht}{article}{
   author={Frucht, Robert},
   title={On the groups of repeated graphs},
   journal={Bull. Amer. Math. Soc.},
   volume={55},
   date={1949},
   pages={418--420},
}

\bib{harary}{book}{
   author={Harary, Frank},
   title={Graph theory},
   publisher={Addison-Wesley Publishing Co., Reading, Mass.-Menlo Park,
   Calif.-London },
   date={1969},
}

\bib{kerber}{book}{
   author={Kerber, Adalbert},
   title={Representations of permutation groups. I},
   series={Lecture Notes in Mathematics, Vol. 240},
   publisher={Springer-Verlag, Berlin-New York},
   date={1971},
}

\bib{krumm_finiteness_code}{article}{
   author={Krumm, David},
   title={Code for the computations in the article ``A finiteness theorem for specializations of dynatomic polynomials"},
   eprint={https://github.com/davidkrumm/finiteness_dynatomic},
   year={2018}
}

\bib{krumm_fourth_dynatomic}{article}{
   author={Krumm, David},
   title={Galois groups in a family of dynatomic polynomials},
   journal={J. Number Theory},
   volume={187},
   date={2018},
   pages={469-511}
}

\bib{krumm_lgp}{article}{
   author={Krumm, David},
   title={A local-global principle in the dynamics of quadratic polynomials},
   journal={Int. J. Number Theory},
   volume={12},
   date={2016},
   number={8},
   pages={2265--2297},
}

\bib{krumm-sutherland}{article}{
   author={Krumm, David},
   author={Sutherland, Nicole},
   title={Galois groups over rational function fields and explicit {H}ilbert irreducibility},
   eprint={https://arxiv.org/pdf/1708.04932.pdf}
}

\bib{lang_algebra}{book}{
   author={Lang, Serge},
   title={Algebra},
   series={Graduate Texts in Mathematics},
   volume={211},
   edition={3},
   publisher={Springer-Verlag, New York},
   date={2002},
}

\bib{leon}{article}{
   author={Leon, Jeffrey S.},
   title={Partitions, refinements, and permutation group computation},
   book={
      series={DIMACS Ser. Discrete Math. Theoret. Comput. Sci.},
      volume={28},
      publisher={Amer. Math. Soc., Providence, RI},
   },
   date={1997},
   pages={123--158},
}

\bib{magma}{article}{
   author={Bosma, Wieb},
   author={Cannon, John},
   author={Playoust, Catherine},
   title={The Magma algebra system. I. The user language},
   journal={J. Symbolic Comput.},
   volume={24},
   date={1997},
   number={3-4},
   pages={235--265},
}

\bib{morton_period3}{article}{
   author={Morton, Patrick},
   title={Arithmetic properties of periodic points of quadratic maps},
   journal={Acta Arith.},
   volume={62},
   date={1992},
   number={4},
   pages={343--372},
}

\bib{morton_period4}{article}{
   author={Morton, Patrick},
   title={Arithmetic properties of periodic points of quadratic maps. II},
   journal={Acta Arith.},
   volume={87},
   date={1998},
   number={2},
   pages={89--102},
}

\bib{morton_dynatomic_curves}{article}{
   author={Morton, Patrick},
   title={On certain algebraic curves related to polynomial maps},
   journal={Compositio Math.},
   volume={103},
   date={1996},
   number={3},
   pages={319--350},
}

\bib{morton-patel}{article}{
   author={Morton, Patrick},
   author={Patel, Pratiksha},
   title={The Galois theory of periodic points of polynomial maps},
   journal={Proc. London Math. Soc. (3)},
   volume={68},
   date={1994},
   number={2},
   pages={225--263},
}

\bib{morton-vivaldi}{article}{
   author={Morton, Patrick},
   author={Vivaldi, Franco},
   title={Bifurcations and discriminants for polynomial maps},
   journal={Nonlinearity},
   volume={8},
   date={1995},
   number={4},
   pages={571--584},
}

\bib{neukirch}{book}{
   author={Neukirch, J\"urgen},
   title={Algebraic number theory},
   series={Grundlehren der Mathematischen Wissenschaften [Fundamental
   Principles of Mathematical Sciences]},
   volume={322},
   publisher={Springer-Verlag, Berlin},
   date={1999},

}

\bib{poonen_prep}{article}{
   author={Poonen, Bjorn},
   title={The classification of rational preperiodic points of quadratic
   polynomials over ${\bf Q}$: a refined conjecture},
   journal={Math. Z.},
   volume={228},
   date={1998},
   number={1},
   pages={11--29},
}

\bib{rosen}{book}{
   author={Rosen, Michael},
   title={Number theory in function fields},
   series={Graduate Texts in Mathematics},
   volume={210},
   publisher={Springer-Verlag, New York},
   date={2002},
}

\bib{rotman}{book}{
   author={Rotman, Joseph J.},
   title={An introduction to the theory of groups},
   series={Graduate Texts in Mathematics},
   volume={148},
   edition={4},
   publisher={Springer-Verlag, New York},
   date={1995},
}

\bib{serre_topics}{book}{
   author={Serre, Jean-Pierre},
   title={Topics in Galois theory},
   series={Research Notes in Mathematics},
   volume={1},
   edition={2},
   note={With notes by Henri Darmon},
   publisher={A K Peters, Ltd., Wellesley, MA},
   date={2008},
}

\bib{stanley}{book}{
   author={Stanley, Richard P.},
   title={Enumerative combinatorics. Volume 1},
   series={Cambridge Studies in Advanced Mathematics},
   volume={49},
   edition={2},
   publisher={Cambridge University Press, Cambridge},
   date={2012},
}

\bib{stichtenoth}{book}{
   author={Stichtenoth, Henning},
   title={Algebraic function fields and codes},
   series={Graduate Texts in Mathematics},
   volume={254},
   edition={2},
   publisher={Springer-Verlag, Berlin},
   date={2009},
}

\bib{stoll_6cycles}{article}{
   author={Stoll, Michael},
   title={Rational 6-cycles under iteration of quadratic polynomials},
   journal={LMS J. Comput. Math.},
   volume={11},
   date={2008},
   pages={367--380},
}

\end{biblist}
\end{bibdiv}

\end{document}